\subjclass[2010]{11M36 \and 34L40 \and 49J15}
\keywords{Functional determinant; extremal spectra; Pontrjagin maximum principle; Weierstrass $\wp$-function}
\theoremstyle{plain}
\newtheorem{theorem}{Theorem}
\newtheorem{prop}{Proposition}
\newtheorem{lemma}{Lemma}
\newtheorem{cor}{Corollary}
\newtheorem{thmx}{Theorem}
\theoremstyle{definition}
\newtheorem{remark}{Remark}
\newtheorem{expl}{Example} 
\newcommand{\R}{\mathbb{R}}
\renewcommand{\Re}{\operatorname{Re}}
\newcommand{\NM}{\mathbb{N}}
\newcommand{\CM}{\mathbb{C}}
\newcommand{\Z}{\mathbb{Z}}
\newcommand{\bo}{ {\rm O} }
\newcommand{\dint}{\ds\int}
\newcommand{\ds}{\displaystyle}
\newcommand{\dsum}{\ds\sum}
\newcommand{\eqskip}{ \vspace*{2mm}\\ }
\newcommand{\fr}[2]{\frac{\ds #1}{\ds #2}}
\newcommand{\ttt}{\mathscr{T}}
\newcommand{\DD}{\mathscr{D}}
\def\noi{\noindent}
\def\resp{\emph{resp.}}
\renewcommand{\L}{L}
\renewcommand{\dot}[1]{#1'}
\renewcommand{\ddot}[1]{#1''}
\renewcommand{\d}{\mathrm{d}}
\newcommand{\W}{W}
\def\iy{\infty}
\renewcommand\d{\mathrm{d}}
\newcommand{\veps}{\varepsilon}
\newcommand{\frp}[2]{\frac{\partial #1}{\partial #2}}
\newcommand{\frpp}[2]{{\partial #1}/{\partial #2}}
\renewcommand\bar{\overline}
\newcommand{\re}{{\rm Re}}
\renewcommand\tilde{\widetilde}
\newcommand\ch{\mathrm{ch}}
\newcommand\sh{\mathrm{sh}}
\newcommand\shc{\mathrm{shc}}
\newcommand{\Tr}{{\rm Tr}}
\newcommand{\wt}[1]{\widetilde{#1}}
\newcommand{\aalpha}{\alpha}
\newcommand{\balpha}{q}
\newcommand{\bbeta}{r}
        \definecolor{purple}{rgb}{0.4,0.2,1}
\title[Maximal determinants of Schr\"{o}dinger operators]%
{Maximal determinants of Schr\"{o}dinger operators on bounded intervals}
\author{Clara L.~Aldana, Jean-Baptiste Caillau and Pedro Freitas}
\address{Universidad del Norte, V\'\i a Puerto Colombia, Barranquilla, Colombia}
\email{claldana@uninorte.edu.co}
\address{Universit\'e C\^ote d'Azur, CNRS, Inria, LJAD, France}
\email{jean-baptiste.caillau@univ-cotedazur.fr}
\address{Departamento de Matem\'{a}tica, Instituto Superior T\'{e}cnico, Universidade de Lisboa, Av. Rovisco Pais, 1049-001 Lisboa, Portugal
\& Grupo de F\'{\i}sica Matem\'{a}tica, Faculdade de Ci\^{e}ncias, Universidade de Lisboa, Campo Grande, Edif\'{\i}cio C6,
1749-016 Lisboa, Portugal }
\email{psfreitas@fc.ul.pt}
\date{\today}
\thanks{Part of this work was done during a sabbatical leave of the second and third
authors at the Laboratoire Jacques-Louis Lions, Sorbonne Universit\'{e} \& CNRS, whose
hospitality is gratefully acknowledged. The first author was supported by ANR grant
ACG: ANR-10-BLAN 0105 and by the Fonds National de la Recherche, Luxembourg 7926179.
The second and third authors acknowledge support from FCT, grant no.~PTDC/MAT-CAL/4334/2014.}
\begin{document}

\begin{abstract} We consider the problem of finding extremal potentials for the functional determinant
of a one-dimensional Schr\"{o}dinger operator defined on a bounded interval with Dirichlet boundary conditions
under an $L^\balpha$-norm res\-triction ($\balpha\geq 1$). This is done by first extending the definition of the functional
determinant to the case of $L^\balpha$ potentials and showing the resulting problem to be equivalent to a problem
in optimal control, which we believe to be of independent interest. We prove existence, uniqueness and describe some
basic properties of solutions to this problem for all $\balpha\geq 1$, providing a complete characterization of extremal potentials in
the case where $\balpha$ is one (a pulse) and two (Weierstrass's $\wp$ function).
\end{abstract}

\maketitle


\section*{Introduction}
\noi An important quantity arising in connection with self-adjoint elliptic operators is the functional (or spectral)
determinant. This has been applied in a variety of settings in mathematics and in physics, and is based on the regularisation of the
spectral zeta function associated to an operator $\ttt$ with discrete spectrum. This zeta function is defined by
\begin{equation}\label{zetaT}
 \zeta_{\ttt}(s) = \dsum_{n=1}^{\infty} \lambda_{n}^{-s},
\end{equation}
where the numbers $\lambda_{n},  \, n=1,2,\ldots$ denote the eigenvalues of $\ttt$ and,
for simplicity, and without loss of generality from the perspective of this work as we will see below, we shall assume that these
eigenvalues are all positive and with finite multiplicities.
Under these conditions, and for many operators such as the Laplace or Schr\"{o}dinger operators, the above series will be convergent
on a right half-plane, and may typically be extended meromorphically to the whole of $\CM$. Furthermore, zero
is not a singularity and since, formally,
\[
 \zeta_{\ttt}'(0) = -\dsum_{n=1}^{\infty} \log(\lambda_{n}),
\]
the regularised functional determinant is then defined by
\begin{equation}\label{detdef}
 \det\ttt = e^{-\zeta_{\ttt}'(0)},
\end{equation}
where $\zeta_{\ttt}'(0)$ should now be understood as referring to the meromorphic extension mentioned above. This
quantity appears in the mathematics and physics literature in connection to path integrals, going back at least to the early 1960's. Examples
of calculations of determinants for operators with a potential in one dimension may be found in~\cite{BFK,geya,Levit-Smilansky} and, more recently,
for the harmonic oscillator in arbitrary dimension~\cite{frei}. Some of the regularising techniques for zeta functions which are needed in order
to define the above determinant were studied in~\cite{mipl}, while the actual definition~\eqref{detdef} was given in~\cite{rasi}. 
Within such a context, it is then natural to study extremal properties of these global spectral objects and this
question has indeed been addressed by several authors, mostly when the underlying setting is of a geometric nature~\cite{aursal,ops,aar-r}.

In this paper, we shall consider the problem of optimizing the functional determinant for a Schr\"{o}dinger operator defined on a bounded interval
together with Dirichlet boundary conditions. More precisely, let $\ttt_V$ be the operator associated with the eigenvalue problem defined by
\begin{equation}
 \label{schrodinger}
 \left\{
 \begin{array}{l}
 -\phi''+V\phi=\lambda \phi\eqskip
 \phi(0)=\phi(1)=0,
 \end{array}
 \right.
\end{equation}
where $V$ is a potential in $L^\balpha[0,1]$ $(\balpha\geq 1)$. For a given $\balpha$ and a positive constant $A$, we are interested
in the problem of optimizing the determinant given by
\begin{equation}\label{prob}
 \det\ttt_V \to \max,\quad \|V\|_\balpha \leq A 
\end{equation}
where $\| \cdot\|_\balpha$ denotes the norm on $L^\balpha[0,1]$.
For smooth bounded potentials the determinant of such operators is known in closed form and actually requires no computation of the
eigenvalues themselves. In the physics literature such a formula is sometimes referred to as the Gelfand-Yaglom formula and
a derivation may be found in~\cite{Levit-Smilansky}, for instance---see also~\cite{BFK}. More precisely, for the operator $\ttt_V$ defined
by~\eqref{schrodinger} we have $\det \ttt_V = 2y(1)$, where $y$ is the solution of the initial value problem
\begin{equation}
 \label{controlproblem}
 \left\{
 \begin{array}{l}
 -y''+Vy=0\eqskip
 y(0)=0, \;\; y'(0)=1.
 \end{array}
 \right.
\end{equation}

\noi We shall show that this expression for the determinant still holds for $L^\balpha$ potentials and
study the problem defined by~\eqref{prob}. We then prove that~\eqref{prob} is well-posed and has a unique solution for all
$\balpha\geq 1$ and positive $A$.
In our first main result we consider the $\L^1$ case where the solution is given by
a piecewise constant function.

\begin{thmx}[Maximal $L^1$ potential] \label{thm1}
Let $\balpha=1$. Then for any positive number $A$ the unique solution to problem~\eqref{prob}
is the symmetric potential given by
\[
V_{A}(x) = \fr{A}{\ell(A)} \chi_{\ell(A)},
\]
where $\chi_{\ell(A)}$ denotes the characteristic function of the interval of length
\[
 \ell(A) = \fr{A}{(1+\sqrt{1+A})^2}
\]
centred at $1/2$. The associated maximum value of the determinant is
\[
\max_{\| V\|_{1}=A } \det\ttt_V = \frac{4}{1+\sqrt{1+A}}\exp\left( \frac{A}{1+\sqrt{1+A}} \right).
\]
\end{thmx}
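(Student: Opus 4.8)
The plan is to exploit the Gelfand--Yaglom representation $\det\ttt_V=2y(1)$ supplied above, so that \eqref{prob} becomes the control problem of maximising $y(1)$ for the system $y'=p$, $p'=Vy$ with $y(0)=0$, $p(0)=1$, the control $V$ subject to $\|V\|_1\le A$. Two preliminary reductions follow at once from a Wronskian comparison: if $V_1\le V_2$ pointwise and the corresponding solutions stay positive, then $w=y_2y_1'-y_1y_2'$ satisfies $w'=y_1y_2(V_1-V_2)\le0$ and $w(0)=0$, whence $(y_1/y_2)'\le0$ and $y_1(1)\le y_2(1)$. Applying this to $V$ and to $V_+=\max(V,0)$ shows the maximiser may be taken nonnegative; since $V\ge0$, $y(0)=0$, $y'(0)=1$ then force $y$ convex and positive with $y'\ge1$ on $(0,1]$ (so the comparison hypothesis holds, and indeed $y(1)=1+\int_0^1V(t)y(t)(1-t)\,\d t$), and the same monotonicity, via the rescaling $V\mapsto (A/\|V\|_1)V$, shows the budget is saturated, $\|V\|_1=A$. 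Finally, invariance of the Dirichlet determinant under $V(\cdot)\mapsto V(1-\cdot)$ together with the uniqueness already established lets me anticipate that the optimiser is symmetric about $1/2$; this is also recovered intrinsically below.

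Next I would apply the Pontryagin maximum principle. With $H=\psi_1 p+\psi_2 Vy-\mu V$, where $\mu>0$ is the multiplier of the active budget constraint, the adjoint equations are $\psi_1'=-\psi_2 V$, $\psi_2'=-\psi_1$ with transversality $\psi_1(1)=1$, $\psi_2(1)=0$. The control enters $H$ linearly and is not pointwise bounded, so maximising $H$ over $V\ge0$ forces the switching function $\sigma:=\psi_2 y-\mu$ to satisfy $\sigma\le0$ everywhere, with $V$ supported in $\{\sigma=0\}$; there the control is singular rather than bang--bang. Making this alternative precise for an unbounded $L^1$ control---so that the pointwise maximiser of $H$ exists and lies on the singular locus $\{\sigma=0\}$ rather than letting $\sup_V H=+\infty$---is the step I expect to be the most delicate, and I would secure it by a direct first-variation argument inside the $L^1$ ball.

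On a singular arc I would differentiate $\sigma\equiv0$ twice along the flow. The first derivative gives $\psi_1 y=\psi_2 p$ and the second gives $V=(p/y)^2=(y'/y)^2$; inserting this into $y''=Vy$ yields $yy''=(y')^2$, i.e. $(\log y)''=0$. Hence on each component of $\operatorname{supp}V$ the potential is a positive constant $c$ and $y=y(x_1)e^{\sqrt c\,(x-x_1)}$ is a pure exponential. To locate the arcs I would glue them to the field-free regions: on $(0,x_1)$ one has $V=0$ and $y=x$, so continuity of $y,y'$ forces $\sqrt c=y'(x_1)/y(x_1)=1/x_1$; integrating the adjoint backwards on the terminal field-free arc gives $\psi_1\equiv1$ and $\psi_2(x)=1-x$, so that $\sigma=(1-x)y-\mu$. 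Continuity of $\sigma$ and of $\sigma'=-\psi_1y+\psi_2p$ at the right endpoint $x_2$, where both vanish on the arc, gives $(1-x_2)y'(x_2)=y(x_2)$, and since $y'/y=1/x_1$ there this is exactly $x_2=1-x_1$: the support is a single interval centred at $1/2$. Connectedness is worth isolating: on any field-free gap $V=0$ makes $\psi_1,p$ constant and $\psi_2,y$ affine, so $\sigma$ is a quadratic; at an interior junction $\sigma=\sigma'=0$, so a gap between two singular arcs would force a double zero of $\sigma$ at each end and hence $\sigma\equiv0$, a contradiction.

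It remains to compute. With $c=1/x_1^2$ and $\ell=x_2-x_1=1-2x_1$, the budget $c\,\ell=A$ reads $Ax_1^2+2x_1-1=0$, so $x_1=1/(1+\sqrt{1+A})$, giving $\ell(A)=A/(1+\sqrt{1+A})^2$ and height $A/\ell(A)=(1+\sqrt{1+A})^2$, which is $V_A$. Propagating $y$ through the three arcs, the final affine piece has slope $y(x_2)/x_1$ over a length $1-x_2=x_1$, so $y(1)=2y(x_2)=2x_1\exp\!\big((1-2x_1)/x_1\big)$; using $(1-2x_1)/x_1=\sqrt{1+A}-1=A/(1+\sqrt{1+A})$ and $\det\ttt_V=2y(1)$ then yields $\det\ttt_V=\tfrac{4}{1+\sqrt{1+A}}\exp\!\big(A/(1+\sqrt{1+A})\big)$, as claimed. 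The uniqueness already available guarantees this extremal is the maximiser.
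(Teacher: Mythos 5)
Where your argument overlaps the paper's, it is correct, and two of your steps are genuinely nicer than their counterparts: the direct differentiation of $\sigma=\psi_2y-\mu$ showing that on a singular arc $V=(y'/y)^2$, hence $yy''=(y')^2$, so that $(\log y)''=0$ and the potential is a positive constant with $y$ a pure exponential, is an elementary substitute for the paper's Poisson-bracket computation (which identifies the singular control as $-H/p_3$); and your remark that on a field-free gap $\sigma$ is a quadratic, so that double zeros at both junctions would force $\sigma\equiv0$, compactly replaces the paper's exclusion of interior $\gamma_0$ arcs (Lemma~\ref{lemma1}). The terminal algebra is also right: matching $\sigma$ and $\sigma'$ at $x_2$ gives $x_2=1-x_1$, the saturated budget gives $Ax_1^2+2x_1-1=0$, and $2y(1)=4x_1e^{(1-2x_1)/x_1}$ reproduces exactly the stated $\ell(A)$ and maximum value.

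The genuine gap is existence, and it cannot be waved away: the maximum principle is a \emph{necessary} condition applied to a maximizer assumed to exist, so every line of your analysis is conditional, and your closing sentence (``the uniqueness already available guarantees this extremal is the maximiser'') is circular --- uniqueness of the extremal implies nothing unless the supremum is attained. Attainment is precisely what is in doubt here: the set $\{V\ge0,\ \|V\|_1\le A\}$ has no weak compactness in $L^1$, and maximizing sequences can concentrate; the natural concentration limit, the impulsive potential $A\delta_{1/2}$ discussed in the paper's remark after the proof, lies outside $L^1$ and yields the strictly smaller value $1+A/4$, so ruling out loss of mass requires a real argument. The paper devotes most of its proof to this: it introduces auxiliary problems with the extra bound $0\le V\le B$ (where Filippov's theorem gives existence), uses the maximum principle there to reduce to a two-variable maximization over the center and length of a pulse, observes that for large $B$ the auxiliary solutions no longer depend on $B$, and then passes to the unbounded problem by a density-plus-stationarity argument (Lemma~\ref{lemma4}); only afterwards does it run the unbounded-control maximum principle to get uniqueness. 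Secondary gaps, all handled in the paper and merely asserted or deferred by you: the validity of the maximum principle for unbounded $L^1$ controls (you call it ``the most delicate step'' and postpone it to an unspecified first-variation argument; the paper justifies it via Cesari's domination condition), normality and strict positivity of $\mu$ (Lemmas~\ref{prop0} and~\ref{prop2}), and the reduction to $V\ge0$, where your Wronskian comparison needs the solution for the sign-changing potential to stay positive --- exactly what may fail --- and which Proposition~\ref{positivitythm} proves by a more careful two-stage argument.
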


\noi In the case of general $\balpha$ we are able to provide a similar result concerning existence and uniqueness, but the corresponding
extremal potential is now given as the solution of a second order (nonlinear) ordinary differential equation.

\begin{thmx}[Maximal $L^\balpha$ potential, $\balpha>1$]  \label{thm2}
For any $\balpha>1$ and any positive number $A$, there exists a unique solution to problem~\eqref{prob}.
This maximal potential is given by
\[
V_{A} = \frac{\balpha}{4\balpha-2}\sqrt[\balpha-1]{\Psi},
\]
where $\Psi$ is the solution to
\[ \Psi''-|\Psi|^\alpha+2H = 0,\quad \alpha:=\balpha/(\balpha-1), \]
\[ \Psi(0)=0,\quad \Psi'(0)=H-c(A,\balpha). \]
Here $c(A,\balpha):=(1/2)(A(4\balpha-2)/\balpha)^\balpha$, and $H$ is a
(uniquely defined) constant satisfying $H>c(A,\balpha)$.
The function $\Psi$ is non-negative on $[0,1]$, and
the maximal potential is symmetric with respect to $t=1/2$, smooth on $(0,1)$,
strictly increasing on $[0,1/2]$,
with zero derivatives at $t=0$ and $t=1$ if $1<\balpha<2$, positive derivative at $t=0$
(\resp\ negative derivative at $t=1$) if $\balpha=2$, and vertical tangents at both
endpoints if $q>2$.
\end{thmx}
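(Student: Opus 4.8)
The plan is to read \eqref{prob} as an optimal control problem: the state is $(y,y')$, the control is $V$, the dynamics are $y''=Vy$ with $y(0)=0$, $y'(0)=1$, the payoff to be maximised is $\det\ttt_V=2y(1)$, and there is the isoperimetric constraint $\int_0^1|V|^\balpha\,\d t\le A^\balpha$. Because $\balpha>1$ the constraint ball is weakly compact in the reflexive space $L^\balpha[0,1]$, and $V\mapsto y(1;V)$ is weakly continuous since $y$ depends continuously in $C^1$ on $V$ through the Volterra equation $y(t)=t+\int_0^t(t-s)V(s)y(s)\,\d s$; the direct method then yields a maximiser. Writing the Wronskian $w=y_2y_1'-y_1y_2'$ of two such solutions, for which $w'=y_1y_2(V_1-V_2)$, gives the comparison $V_1\le V_2\Rightarrow y(1;V_1)\le y(1;V_2)$, so I may restrict attention to potentials with $V\ge0$ that saturate the constraint, $\|V\|_\balpha=A$.

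Next I would apply the Pontrjagin maximum principle to the Hamiltonian $\eta\,p+\psi\,Vy-\mu|V|^\balpha$, with $\mu\ge0$ the multiplier of the (active) constraint. Pointwise maximisation in $V$ gives $V=\bigl(\psi y/(\mu\balpha)\bigr)^{1/(\balpha-1)}$, and after fixing the normalisation of the adjoint this is exactly $V=\tfrac{\balpha}{4\balpha-2}\,\Psi^{1/(\balpha-1)}$ with $\Psi:=y\psi$. The decisive structural fact is that the costate $\psi$ conjugate to $y'$ solves the same equation $-\psi''+V\psi=0$ as $y$, with $\psi(1)=0$, $\psi'(1)=-1$; hence the Wronskian $y'\psi-y\psi'$ is constant and equal to $y(1)$. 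Differentiating $\Psi=y\psi$ and using the energy identity $E'=-V'\Psi$ for $E:=y'\psi'-V\Psi$, together with $V\propto\Psi^{1/(\balpha-1)}$, collapses the third-order product equation into the autonomous second-order equation $\Psi''-\Psi^\alpha+2H=0$, the constant $H$ being minus the conserved energy. The endpoint data $\Psi(0)=\Psi(1)=0$ follow from $y(0)=0$ and $\psi(1)=0$; integrating the equation over $[0,1]$, invoking the reflection symmetry $t\mapsto1-t$ that the unique extremal inherits from the problem, and using the active constraint $\int_0^1\Psi^\alpha\,\d t=(A(4\balpha-2)/\balpha)^\balpha=2c(A,\balpha)$, produces precisely $\Psi'(0)=H-c(A,\balpha)$, with $H>c(A,\balpha)$ forced by $\Psi'(0)>0$.

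The qualitative assertions then follow. Positivity of $\Psi$ on $(0,1)$ holds because it is the product of two solutions positive there; consequently $V>0$, and since $V\propto\Psi^{1/(\balpha-1)}$ with $\Psi$ increasing on $[0,1/2]$, $V$ is strictly increasing on that half-interval and symmetric about $t=1/2$. Smoothness on $(0,1)$ comes from bootstrapping the equation where $\Psi>0$. The endpoint behaviour is read off the expansion $\Psi(t)\sim(H-c(A,\balpha))\,t$ as $t\to0^+$, which gives $V'(t)\sim C_0\,t^{(2-\balpha)/(\balpha-1)}$ for a positive constant $C_0$: the exponent is positive for $1<\balpha<2$ (so $V'\to0$), zero for $\balpha=2$ (so $V'$ tends to a positive limit), and negative for $\balpha>2$ (so $V'\to+\infty$, a vertical tangent), matching the three regimes.

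The hard part is the existence and uniqueness of $H$, and with it the uniqueness of the maximiser. I would cast it as a shooting problem: the first integral $\tfrac12(\Psi')^2-\tfrac1{\alpha+1}\Psi^{\alpha+1}+2H\Psi=\tfrac12(H-c(A,\balpha))^2$ expresses the time from $\Psi=0$ to the turning point as an explicit function $T(H)$ of $H>c(A,\balpha)$, and the requirement that the symmetric solution return to zero at $t=1$ reads $T(H)=1/2$. Proving that $T$ is strictly monotone in $H$, so that this equation has a single root, is the crux of the argument. Uniqueness of the optimiser is then immediate: any maximiser satisfies the maximum principle, hence coincides with this symmetric extremal, and in particular the value $\max\det\ttt_V=2y(1)$ is attained only there.
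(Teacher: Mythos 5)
Your skeleton matches the paper's: existence by the direct method on the weakly-$*$ compact ball of $L^\balpha$, reduction to non-negative constraint-saturating potentials, the maximum principle with the switching function $\Psi:=y\psi$, the autonomous equation $\Psi''-|\Psi|^\alpha+2H=0$ with $\Psi(0)=\Psi(1)=0$, and the endpoint trichotomy via the exponent $(2-\balpha)/(\balpha-1)$. (One remark on existence: your justification that $V\mapsto y(1;V)$ is weakly continuous ``since $y$ depends continuously in $C^1$ on $V$'' is not sufficient --- norm continuity does not imply weak-$*$ sequential continuity; the paper's Proposition~\ref{prop8} fills this by Gronwall plus Ascoli and passage to the limit in the integral equation, using that $x_k\to\bar x$ strongly while $V_k\rightharpoonup \bar V$ weakly-$*$. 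This gap is real but fixable.)

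The genuine gap is at what you yourself call the crux: uniqueness of $H$. You recast the shooting condition as $T(H)=1/2$, where $T(H)$ is the time from $\Psi=0$ to the turning point, and then state that proving strict monotonicity of $T$ ``is the crux of the argument'' --- without proving it. Nothing you wrote implies it: $T(H)=\int_0^{x_1(H)}\d x/\sqrt{f_H(x)}$ has both its integrand and its upper limit (the first positive root of $f_H$) depending on $H$, and monotonicity of such period-type integrals is a classically delicate matter; moreover, for $T(H)$ to be defined at all one must first show $H<h(A,\balpha)$, the value at which $f_H$ acquires a double root and the orbit becomes a saddle connection (the paper's Lemma~\ref{lem12} and Proposition~\ref{prop11}), an analysis you skip. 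The paper closes this step by a different and much simpler device: it differentiates the initial value problem with respect to $H$. The derivative $\Phi=\partial\Psi/\partial H$ solves $\Phi''-\alpha\Psi^{\alpha-1}\Phi+2=0$, $\Phi(0)=0$, $\Phi'(0)=1$; wherever $\Phi\geq 0$ one has $\Phi''>-2$, hence $\Phi(t)>t(1-t)$ by integration, so $\Phi$ cannot vanish on $(0,1]$ and $\Phi(1)>0$. Thus $H\mapsto\Psi(1,H)$ is strictly increasing and has at most one zero, which together with existence gives uniqueness of the maximizer. A second, smaller flaw: you obtain $\Psi'(0)=H-c(A,\balpha)$ by ``invoking the reflection symmetry that the unique extremal inherits from the problem,'' which is circular, since uniqueness is exactly what is at stake. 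The non-circular route (the paper's Corollary~\ref{cor13}) is to use the first integral $\tfrac{1}{2}\Psi'^2-\tfrac{1}{\alpha+1}\Psi|\Psi|^\alpha+2H\Psi=\mathrm{const}$ at $t=0,1$ to get $\Psi'(1)^2=\Psi'(0)^2$, fix the sign via $\Psi'(0)=\psi(0)>0$ and $\Psi'(1)=-y(1)<0$, deduce symmetry by ODE uniqueness applied to $t\mapsto\Psi(1-t)$, and only then integrate $\Psi''=|\Psi|^\alpha-2H$ over $[0,1]$ against the active constraint to identify $\Psi'(0)$.
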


\noi The properties given in the above theorem provide a precise qualitative description of the evolution of maximal potentials as
$q$ increases from $1$ to $+\infty$. Starting from a rectangular pulse ($q=1$), solutions become regular for $q$ on $(1,2)$, having
zero derivatives at the endpoints. In the special case of $L^{2}$, the maximising
potential can be written in terms of the Weierstrass elliptic function
and has finite nonzero derivatives at the endpoints. This marks the transition to potentials with singular derivatives at the boundary
for $\balpha$ larger than two, converging towards an optimal constant potential in the limiting $L^{\infty}$ case.

\begin{thmx}[Maximal $L^2$ potential]  \label{thm3}
Let $\balpha=2$. Then for any positive number $A$ the unique solution to problem~(\ref{prob})
is given by
\[ V_A(t) = \frac{1}{3}\wp\left(\frac{2t-1}{2\sqrt{6}}+\omega'\right),\quad t \in [0,1], \]
where $\wp$ is the Weierstrass elliptic function associated to invariants
\[ g_2 = 24 H,\quad g_3 = -6(H-9A^2/2)^2, \]
and where $\omega'$ is the corresponding
imaginary half-period of the rectangular lattice of
periods. The corresponding (unique) value of $H$ such that
\[ \wp\left(\frac{1}{2\sqrt{6}}+\omega'\right) = 0 \]
is in $(9A^2/2,h^*(A))$, where $h^*(A)$ is the unique root of the polynomial $128 H^3-9(H-9A^2/2)^4$
in $(9A^2/2,\iy)$.
\end{thmx}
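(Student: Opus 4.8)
The plan is to specialize Theorem~\ref{thm2} to the case $\balpha=2$, where the conjugate exponent is $\alpha=2$ as well, and to recognize the resulting ODE as the defining equation of the Weierstrass $\wp$-function. With $\balpha=2$ the profile equation becomes $\Psi''-\Psi^2+2H=0$ (using that $\Psi\geq 0$ so $|\Psi|^\alpha=\Psi^2$), and the maximal potential reduces to $V_A=\tfrac{1}{6}\Psi$. The first step is therefore to integrate this autonomous second-order equation once: multiplying by $\Psi'$ and integrating yields the energy relation $(\Psi')^2=\tfrac{2}{3}\Psi^3-4H\Psi+K$ for some constant $K$, which I would fix using the initial data $\Psi(0)=0$ and $\Psi'(0)=H-c(A,2)$, where $c(A,2)=(1/2)(A\cdot 6/2)^2=9A^2/2$. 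This gives $K=(\Psi'(0))^2=(H-9A^2/2)^2$.

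Next I would bring the energy relation into Weierstrass normal form. The standard differential equation for $\wp$ is $(\wp')^2=4\wp^3-g_2\wp-g_3$, so after an affine rescaling $\Psi(t)=a\,\wp(bt+\text{const})$ one matches the cubic $\tfrac{2}{3}\Psi^3-4H\Psi+(H-9A^2/2)^2$ to $4\wp^3-g_2\wp-g_3$. Comparing the leading coefficient determines the scaling factors $a$ and $b$ (this is where the $1/3$, the $2\sqrt{6}$ and the factor $24$ in $g_2=24H$ and the $-6$ in $g_3=-6(H-9A^2/2)^2$ emerge), and then $V_A=\tfrac16\Psi=\tfrac13\wp(\tfrac{2t-1}{2\sqrt6}+\omega')$ after recentring at $t=1/2$ to respect the symmetry already guaranteed by Theorem~\ref{thm2}. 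The shift by the imaginary half-period $\omega'$ is dictated by the requirement that $\Psi$, hence $\wp$, be real and non-negative on the relevant interval: along the vertical line $\Re=\omega'$ (equivalently the segment between the two half-periods $\omega'$ and $\omega+\omega'$) the function $\wp$ takes real values lying in the appropriate branch, whereas on the real axis $\wp$ would blow up. The lattice is rectangular precisely because the invariants $g_2,g_3$ are real with the discriminant configured so that all three roots $e_1,e_2,e_3$ of the cubic are real.

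The remaining, and most delicate, step is the characterization of $H$ via the transcendental constraint $\wp(\tfrac{1}{2\sqrt6}+\omega')=0$ together with the location of $H$ in the interval $(9A^2/2,h^*(A))$. Here I would argue as follows. The left endpoint $9A^2/2$ corresponds to $\Psi'(0)=0$, i.e. the degenerate case already excluded by the strict inequality $H>c(A,\balpha)$ in Theorem~\ref{thm2}; the upper bound $h^*(A)$ is determined by requiring the lattice to remain rectangular, which is equivalent to the discriminant $g_2^3-27g_3^2$ staying positive with real half-periods. Writing out $g_2^3-27g_3^2$ in terms of $H$ produces, up to a positive constant, the polynomial $128H^3-9(H-9A^2/2)^4$, whose unique root $h^*(A)$ in $(9A^2/2,\infty)$ marks the boundary at which two of the $e_i$ collide and the lattice degenerates. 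The hard part will be verifying that the map $H\mapsto \wp(\tfrac{1}{2\sqrt6}+\omega'(H))$ is strictly monotone on $(9A^2/2,h^*(A))$ and changes sign exactly once, thereby yielding a single admissible $H$; I expect to obtain this monotonicity by differentiating the half-period $\omega'(H)$ (an elliptic integral) with respect to $H$ and controlling the sign of the resulting period derivative, or alternatively by invoking the uniqueness already established abstractly in Theorem~\ref{thm2} and merely confirming consistency with the stated range. Establishing this monotonicity/sign-change cleanly, rather than appealing to the abstract uniqueness, is the principal technical obstacle.
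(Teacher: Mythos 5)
Your route is essentially the paper's: specialize Theorem~\ref{thm2} to $\balpha=2$, integrate $\Psi''-\Psi^2+2H=0$ once to get $(\Psi')^2=\tfrac{2}{3}\Psi^3-4H\Psi+(H-9A^2/2)^2$, pass to Weierstrass normal form to read off $g_2=24H$, $g_3=-6(H-9A^2/2)^2$, use reality of the invariants and positivity of the discriminant $g_2^3-27g_3^2=108\left(128H^3-9(H-9A^2/2)^4\right)$ to get the rectangular lattice and to identify $h^*(A)$, and parameterize the bounded oval by $z\in\R+\omega'$. Two points, however, need repair.

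First, a bookkeeping error: for $\balpha=2$, Theorem~\ref{thm2} gives $V_A=\frac{\balpha}{4\balpha-2}\Psi=\frac{1}{3}\Psi$, not $\frac{1}{6}\Psi$. The paper's change of variables is $u=x$, $v=\sqrt{6}\,y$, i.e.\ there is no rescaling of the abscissa, so $\Psi(t)=\wp(z(t))$ exactly and $V_A=\frac{1}{3}\wp$; your chain $V_A=\frac{1}{6}\Psi=\frac{1}{3}\wp$ forces $\Psi=2\wp$, which is incompatible with the invariants you correctly derived. Relatedly, your justification of the shift by $\omega'$ ("so that $\wp$ is real and non-negative" on the segment joining $\omega'$ to $\omega+\omega'$) cannot stand as written: on that segment $\wp$ is real but ranges over $[e_3,e_2]$ with $e_3<0<e_2$, so it is negative near $\omega'$. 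What actually pins down the parameterization is matching boundary data, as in the paper's lemma: locate the two zeros $\pm\xi_0+\omega'$ of $\wp$ on the oval, match the signs of $\wp'$ there with $\Psi'(0)>0$ and $\Psi'(1)=-\Psi'(0)<0$, and then the constraint $\Psi(1)=0$ forces $\xi_0=\frac{1}{2\sqrt{6}}$; only the sub-arc of the oval between the two zeros, where $\wp\geq 0$, is traversed for $t\in[0,1]$.

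Second, and more importantly, the step you single out as "the principal technical obstacle" is not an obstacle, and the period-derivative argument you propose is not needed. Since the time parameterization has constant speed $z'(t)=1/\sqrt{6}$, the transcendental condition $\wp\left(\frac{1}{2\sqrt{6}}+\omega'\right)=0$ is nothing but $\Psi(1,H)=0$ rewritten. The strict monotonicity of $H\mapsto\Psi(1,H)$ on $(c(A,\balpha),h(A,\balpha))$ --- hence uniqueness of the admissible $H$ --- was already established in the proof of Theorem~\ref{thm2} via the linearized equation $\Phi''-\alpha\Psi^{\alpha-1}\Phi+2=0$, $\Phi(0)=0$, $\Phi'(0)=1$, which yields $\Phi>t(1-t)$ and so $\Phi(1)>0$. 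This is exactly what the paper invokes: your "alternative" of appealing to Theorem~\ref{thm2} is not a fallback but the intended proof, and differentiating the half-period $\omega'(H)$ as an elliptic integral would only reprove, with considerably more effort, what the ODE argument already gives.
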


The paper is structured as follows. In the next section we show that the functional determinant of Schr\"{o}dinger
operators with Dirichlet boundary conditions on bounded intervals and with potentials in $L^\balpha$ is well defined and we extend the formula
from~\cite{Levit-Smilansky} to this general case. The main properties of the determinant, namely boundedness and monotonicity over
$L^\balpha$, are studied in Section~\ref{s2}. Having established these, we then consider the optimal control problem~\eqref{prob} of
maximising $y(1)$ in~\eqref{controlproblem} in Sections~\ref{s3} and~\ref{s4}, where the proofs of our main results Theorems~\ref{thm1}
and~\ref{thm2}-\ref{thm3} are given, respectively. 


\section{The determinant of one-dimensional Schr\"{o}dinger operators\label{s1}}

\noi We consider the eigenvalue problem defined by~\eqref{schrodinger} associated to
the operator $\ttt_V$ on the interval $[0,1]$ with potential $V\in L^\balpha[0,1]$ for $\balpha\geq 1$. Although no further
restrictions need to be imposed on $V$ at this point, for our purposes it will be sufficient to consider $V$ to be non-negative,
as we will show in Proposition~\ref{positivitythm} below. This simplifies slightly the definition of the associated zeta function given by~\eqref{zetaT}
and thus also that of the determinant. Hence, in the rest of this section we assume that $V$ is a non-negative potential.

For smooth potentials, and as was already mentioned in the Introduction, it is known that the regularised functional determinant of $\ttt_V$ is
well defined~\cite{BFK,Levit-Smilansky}, and this has been extended to potentials with specific singularities~\cite{Lesch,Lesch-Tolk}.
We shall now show that this is also the case for general potentials in $L^\balpha[0,1]$, for $\balpha\geq 1$. We first show that the
zeta function associated with the operator $\ttt_V$ as defined by~\eqref{zetaT} is analytic at the origin and has as its only singularity
in the half-plane $\re(s)>-1/2$ a simple pole at $1/2$. This is done adapting one of the approaches originally used by Riemann (see~\cite{riem}
and also~\cite[p.~21ff]{titc}). We then show that the method from~\cite{Levit-Smilansky} can be used to prove that the determinant is still given by $2y(1)$,
where $y$ is the solution of the initial value problem~\eqref{controlproblem}.

In order to show these properties of the determinant, it is useful to consider the heat trace associated with $\ttt_V$, defined by
\[
\Tr(e^{-t \ttt_V}) = \sum_{n=1}^{\infty} e^{-t \lambda_n}.
\]
Our first step is to show that the behaviour of the heat trace as $t$ approaches zero for any non-negative potential in $L^{\balpha}(0,1)$ is
the same as in the case of smooth potentials.
\begin{prop}\label{heattrace}
Let $\ttt_V$ be the Schr\"odinger operator defined by problem
(\ref{schrodinger}) with $V\in L^{\balpha}[0,1]$, then 
$$\Tr(e^{-t \ttt_V}) = \frac{1}{2\sqrt{\pi t}} - \frac{1}{2} + \bo(\sqrt{t}), \quad \textrm{ as } \quad t\to 0.$$
\label{detSchL1}
\end{prop}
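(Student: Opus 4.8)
The plan is to compare $\Tr(e^{-t\ttt_V})$ with the heat trace of the free Dirichlet operator $\ttt_0=-\mathrm{d}^2/\mathrm{d}x^2$ on $[0,1]$ (that is, $\ttt_V$ with $V\equiv 0$), whose eigenvalues $(n\pi)^2$ and eigenfunctions $\sqrt2\sin(n\pi x)$ are explicit. First I would record the exact small-$t$ behaviour of the unperturbed trace: writing $\Tr(e^{-t\ttt_0})=\sum_{n\ge1}e^{-t(n\pi)^2}$ and applying Poisson summation (equivalently, the Jacobi theta transformation) gives, up to an exponentially small remainder,
\[ \Tr(e^{-t\ttt_0}) = \frac{1}{2\sqrt{\pi t}} - \frac12, \]
which are precisely the two leading terms in the statement. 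It therefore suffices to show that the difference $D(t):=\Tr(e^{-t\ttt_V})-\Tr(e^{-t\ttt_0})$ is $\bo(\sqrt t)$.

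Here is where I would exploit that $V\ge0$ (granted by Proposition~\ref{positivitythm}) to obtain a two-sided bound. On the one hand $\ttt_V\ge\ttt_0$, so by the min--max principle $\lambda_n(\ttt_V)\ge\lambda_n(\ttt_0)$ for all $n$, whence $D(t)\le0$; this yields the upper bound at no cost. For a matching lower bound I would iterate Duhamel's formula once,
\[ e^{-t\ttt_V}=e^{-t\ttt_0}-\int_0^t e^{-(t-s)\ttt_0}V e^{-s\ttt_0}\,\mathrm{d}s+\int_0^t\!\!\int_0^s e^{-(t-s)\ttt_0}Ve^{-(s-r)\ttt_0}Ve^{-r\ttt_V}\,\mathrm{d}r\,\mathrm{d}s, \]
and observe that, since $V\ge0$ and the semigroups $e^{-\sigma\ttt_0}$ and $e^{-\sigma\ttt_V}$ have nonnegative kernels (maximum principle), the last (remainder) term has nonnegative trace. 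Taking traces and using cyclicity, $\Tr\!\big(e^{-(t-s)\ttt_0}Ve^{-s\ttt_0}\big)=\Tr\!\big(Ve^{-t\ttt_0}\big)=\int_0^1 V(z)K_0(t,z,z)\,\mathrm{d}z$ is independent of $s$, so integration over $s\in[0,t]$ produces a factor $t$ and
\[ D(t)\ge -t\int_0^1 V(z)\,K_0(t,z,z)\,\mathrm{d}z. \]

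It then remains to estimate this first-order term. Using the diagonal Gaussian bound $0\le K_0(t,z,z)\le (4\pi t)^{-1/2}$ (the Dirichlet kernel is dominated by the whole-line one) together with $V\in L^{\balpha}[0,1]\subset L^1[0,1]$, one finds
\[ 0\le t\int_0^1 V(z)\,K_0(t,z,z)\,\mathrm{d}z \le \frac{\sqrt{t}}{2\sqrt{\pi}}\,\|V\|_1 = \bo(\sqrt{t}), \]
which combined with $D(t)\le0$ gives $D(t)=\bo(\sqrt{t})$ and hence the claim. (The leading correction is in fact $-\tfrac{\sqrt t}{2\sqrt\pi}\int_0^1 V$, matching the classical smooth expansion, but only the order is needed here.)

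The main obstacle is the low regularity of $V$: the Duhamel expansion and the trace-class nature of the operators involved must be justified for a merely $L^{\balpha}$ potential, for which $V$ is not a bounded multiplication operator when $\balpha<\infty$. I would handle this through the factorisation $V=\operatorname{sgn}(V)\,|V|^{1/2}\,|V|^{1/2}$, noting that $|V|^{1/2}\in L^{2\balpha}[0,1]\subset L^2[0,1]$ for $\balpha\ge1$, so that $|V|^{1/2}e^{-\sigma\ttt_0/2}$ is Hilbert--Schmidt and the relevant products are trace class. Approximating $V$ from below by bounded potentials $V_k\uparrow V$, applying the above sandwich to each $V_k$, and passing to the limit using the monotone convergence of traces then transfers the estimate to $V$ and completes the proof.
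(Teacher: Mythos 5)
Your proposal is correct, but it proves the proposition by a genuinely different route than the paper. The paper's proof is spectral and comparison-based: it invokes the Savchuk--Shkalikov eigenvalue asymptotics $\lambda_n=n^2\pi^2+\bo(1)$ (valid for potentials that are derivatives of functions of bounded variation, hence for $L^\balpha$ potentials), deduces the uniform two-sided bound $\pi^2n^2-c\le\lambda_n\le\pi^2n^2+c$, and sandwiches the heat trace between $e^{-ct}\psi(t)$ and $e^{ct}\psi(t)$, where $\psi$ is the Jacobi theta function; the functional equation for $\psi$ and the elementary estimate $(e^{ct}-1)\psi(t)=\bo(\sqrt t)$ then finish the argument. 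You instead work at the level of semigroups: the min--max inequality gives the upper bound $D(t)\le 0$ (this is the one place where nonnegativity of $V$ is truly essential --- note that this is the standing assumption of Section~\ref{s1}, stated just before the proposition, rather than a consequence of Proposition~\ref{positivitythm}, which concerns the reduction of the optimization problem), while one iteration of Duhamel plus positivity of the Dirichlet heat kernels gives the matching lower bound $D(t)\ge -t\int_0^1 V(z)K_0(t,z,z)\,\d z\ge -\tfrac{\sqrt t}{2\sqrt\pi}\|V\|_1$, with the trace-class issues for $V\in L^\balpha$ handled by the $|V|^{1/2}$ factorisation and monotone approximation by bounded potentials --- all standard and correctly flagged. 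The trade-offs: your argument is self-contained (no appeal to the nontrivial external asymptotics of \cite{SavcShka99,Savchuk}) and yields the sharper information that the $\bo(\sqrt t)$ term is $-\tfrac{\sqrt t}{2\sqrt\pi}\int_0^1 V+\so(\sqrt t)$ with a sign; on the other hand it leans on $V\ge 0$ for the cheap upper bound (a sign-changing $V$ would force you to also control the second Duhamel term), whereas the paper's eigenvalue-comparison argument is insensitive to the sign of $V$ and transfers verbatim to any potential satisfying the $\bo(1)$ eigenvalue asymptotics.
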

\begin{proof}
For potentials which are the derivative of a function of bounded variation it was proved in Section $3$ in~\cite{SavcShka99}
(see also~\cite[Theorem 1]{Savchuk}) that the eigenvalues behave asymptotically as
\begin{equation}
\lambda_n = n^2\pi^2 + \bo(1), \quad n=1,2,\dots \label{eq:evas}
\end{equation}
when $n$ goes to $\infty$. For a potential in $L^{\balpha}$ we may thus assume the above asymptotics which imply
the existence of a positive constant $c$ such that
\[
 \pi^2 n^2 - c \leq \lambda_n \leq \pi^2 n^2 + c
\]
uniformly in $n$.
For the zero potential the spectrum is given by $\pi^2 n^2$ and the heat trace associated with it becomes the Jacobi theta function defined by
\[
 \psi(t) = \sum_{n=1}^{\infty} e^{-n^2 \pi^2 t}.
\]
We are interested in the behaviour of the heat trace for potentials $V$ for small positive $t$, and we will determine this
behaviour by comparing it with that of $\psi$. For simplicity, in what follows we write
\[
 \varphi(t) = \Tr(e^{-t \ttt_V}).
\]
We then have
\[
\left(e^{-c t}-1\right) \psi(t) \leq \varphi(t) - \psi(t) \leq \left(e^{c t}-1\right) \psi(t),
\]
and, since $\psi$ satisfies the functional equation~\cite[p. 22]{titc}
\[
\psi(t)=\frac{1}{2\sqrt{\pi t}} - \frac{1}{2} + \frac{1}{\sqrt{\pi t}} \ \psi\left(\frac{1}{\pi^2 t}\right),
\]
it follows that
\begin{multline*}
\frac{1}{\sqrt{\pi t}} \ \psi\left(\frac{1}{\pi^2 t}\right) + (e^{-c t}-1)\psi(t) \leq \varphi(t) - \frac{1}{2\sqrt{\pi t}} + \frac{1}{2}
\leq \frac{1}{\sqrt{\pi t}} \ \psi\left(\frac{1}{\pi^2 t}\right) + (e^{c t}-1) \psi(t).\\
\end{multline*}
Since $\fr{1}{\sqrt{\pi t}} \ \psi\left(\fr{1}{\pi^2 t}\right) = \bo(e^{-C/t})$ for some $C>0$ and 
\[
 (e^{c t}-1) \psi(t) = \bo(\sqrt{t})
\]
as $t\to 0$, it follows that
\[
 \varphi(t) =  \frac{1}{2\sqrt{\pi t}} - \frac{1}{2} + \bo(\sqrt{t}) \mbox{ as } t\to 0.
\]\end{proof}
\begin{remark}
 Note that although the heat trace for the zero potential $\psi$ satisfies
\[
 \psi(t)=\frac{1}{2\sqrt{\pi t}} - \frac{1}{2} + \bo(t^{\alpha}) \mbox{ as } t\to 0^{+}
\]
for any positive real number $\alpha$, this will not be the case for general potentials, where we can only ensure that the next term in the expansion
will be of order $\sqrt{t}$.
\end{remark}

\noi We may now consider the extension of $\zeta_{\ttt_V}$ to a right half-plane containing the origin.
\begin{prop}\label{meromorphicext}
 The spectral zeta function associated with the operator $\ttt_V =-\Delta + V$ with Dirichlet boun\-dary conditions and potential $V\in L^{\balpha}[0,1]$, $\balpha\geq 1$,  
 defined by~\eqref{zetaT} may be extended to the half-plane $\re(s)>-1/2$ as a meromorphic function with a simple pole at $s=1/2$, whose residue is
 given by $1/(2\pi)$. 
\end{prop}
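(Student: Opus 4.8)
The plan is to recover $\zeta_{\ttt_V}$ from the heat trace by a Mellin transform and then read off its analytic structure directly from the small-$t$ expansion established in Proposition~\ref{heattrace}. Since $V$ is non-negative, the bottom of the spectrum satisfies $\lambda_1\geq\pi^2>0$, and the asymptotics $\lambda_n=\pi^2 n^2+\bo(1)$ show that the series~\eqref{zetaT} converges absolutely and defines a holomorphic function on the half-plane $\re(s)>1/2$ (where $\sum n^{-2\re(s)}$ converges). On this half-plane, writing $\varphi(t)=\Tr(e^{-t\ttt_V})$ and using $\lambda_n^{-s}=\Gamma(s)^{-1}\int_0^\infty t^{s-1}e^{-t\lambda_n}\,dt$, one interchanges sum and integral to obtain
\[
\Gamma(s)\,\zeta_{\ttt_V}(s)=\int_0^\infty t^{s-1}\varphi(t)\,dt,\qquad \re(s)>1/2.
\]

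Next I would split the integral at $t=1$. The tail $\int_1^\infty t^{s-1}\varphi(t)\,dt$ defines an entire function of $s$, because $\varphi(t)=\bo(e^{-\lambda_1 t})$ as $t\to\infty$. For the remaining piece I would insert the expansion from Proposition~\ref{heattrace}, writing
\[
\varphi(t)=\fr{1}{2\sqrt{\pi}}\,t^{-1/2}-\fr{1}{2}+R(t),\qquad R(t)=\bo(\sqrt{t})\ \text{ as }\ t\to 0,
\]
and integrating term by term over $(0,1)$. The first two terms contribute $\tfrac{1}{2\sqrt{\pi}}(s-1/2)^{-1}$ and $-\tfrac12 s^{-1}$, while the remainder integral $\int_0^1 t^{s-1}R(t)\,dt$ converges and is holomorphic on $\re(s)>-1/2$, since $t^{s-1}R(t)=\bo(t^{\re(s)-1/2})$ is integrable near the origin precisely there. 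This exhibits $\Gamma(s)\,\zeta_{\ttt_V}(s)$ as a meromorphic function on $\re(s)>-1/2$ whose only singularities are simple poles at $s=1/2$ and $s=0$.

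Finally I would divide by $\Gamma(s)$. Since $1/\Gamma$ is entire and vanishes at $s=0$, the spurious pole at the origin is cancelled, so $\zeta_{\ttt_V}$ is holomorphic at $s=0$ (which is what makes the definition~\eqref{detdef} meaningful) and has a single simple pole at $s=1/2$ in the strip. Because $\Gamma(1/2)=\sqrt{\pi}\neq0$, the residue is
\[
\operatorname{Res}_{s=1/2}\zeta_{\ttt_V}=\frac{1}{\Gamma(1/2)}\cdot\frac{1}{2\sqrt{\pi}}=\frac{1}{2\pi},
\]
as claimed.

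The main obstacle is not the computation but the rigorous justification of the Mellin representation on the maximal half-plane: one must verify absolute convergence of the combined sum and integral, which rests both on the positivity of the bottom of the spectrum (controlling the $t\to\infty$ behaviour) and on the sharp $\bo(\sqrt{t})$ control of the heat-trace remainder supplied by Proposition~\ref{heattrace} (controlling the $t\to 0$ behaviour). It is precisely this $\sqrt{t}$ remainder---rather than the $\bo(t^{\alpha})$ valid in the free case noted in the preceding Remark---that limits the meromorphic continuation to $\re(s)>-1/2$.
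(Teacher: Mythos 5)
Your proposal is correct and follows essentially the same route as the paper's own proof: the Mellin representation $\Gamma(s)\,\zeta_{\ttt_V}(s)=\int_0^\infty t^{s-1}\varphi(t)\,\d t$, the split at $t=1$ with the tail controlled by the exponential decay of $\varphi$, insertion of the heat-trace expansion from Proposition~\ref{heattrace} to produce the explicit poles at $s=1/2$ and $s=0$, cancellation of the latter by the zero of $1/\Gamma$, and the residue computation $\frac{1}{\Gamma(1/2)}\cdot\frac{1}{2\sqrt{\pi}}=\frac{1}{2\pi}$. No gaps; your closing remark correctly identifies the $\bo(\sqrt{t})$ remainder as the sole obstruction to continuing past $\re(s)=-1/2$.
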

\begin{proof}
 We start from
\[
 \dint_{0}^{+\infty} t^{s-1} e^{-\lambda_{n}t} {\rm d}t = \Gamma(s) \lambda_{n}^{-s}
\]
which is valid for $\re(s)>0.$ Summing both sides in $n$ from one to infinity yields
\[
 \zeta_{\ttt_V}(s) = \fr{1}{\Gamma(s)} \dint_{0}^{+\infty} t^{s-1} \varphi(t) {\rm d}t,
\]
where $\varphi$ denotes the heat trace as above and the exchange between the sum and the integral is valid for $\re(s)>1/2$.
By Proposition~\ref{heattrace} we may write
\[
 \varphi(t) = \frac{1}{2\sqrt{\pi t}} - \frac{1}{2} + f(t)
\]
where $f(t) = \bo(\sqrt{t})$ as $t$ approaches zero. Since, in addition, $\varphi(t) = \bo(e^{-ct})$ as $t$ approaches infinity, for some $c>0$, we have
\[
\begin{array}{lll}
 \zeta_{\ttt_V}(s) & = & \fr{1}{\Gamma(s)}\left[ \dint_{0}^{1} \left( \frac{1}{2\sqrt{\pi t}} - \frac{1}{2} + f(t) \right) t^{s-1}  {\rm d}t +
 \dint_{1}^{+\infty} \varphi(t) t^{s-1} {\rm d}t\right]\eqskip
 & = & \fr{1}{\Gamma(s)}\left( \fr{\pi^{-1/2}}{2s-1} -\fr{1}{2s} \right) + F(s)
\end{array}
\]
which is valid for $\re(s)>1$ and where $F$ is an analytic function in the half-plane $\re(s)>-1/2$. Due to the simple zero of $1/\Gamma(s)$
at zero we see that the expression in the right-hand side is well defined and meromorphic in the half-plane $\re(s)>-1/2$, except for the
simple pole at $s=1/2$, showing that we may extend $\zeta_{\ttt_V}(s)$ to this half-plane. The value of the residue is obtained by a
standard computation.
\end{proof}
\begin{remark}
 It is clear from the proof that the behaviour on $\re(s)\leq 1/2$ will depend on the potential $V$. This may be seen from the simple example of a constant
 potential $V(x) \equiv a$, for which the heat trace now satisfies
 \[
  \varphi(t) = e^{-a t}\psi(t).
 \]
From this we see that when $a$ is nonzero $\varphi(t)$ has an expansion for small $t$ with terms of the form $t^{-(n-1)/2}$, for all non-negative
integers $n$. These terms produce simple poles at $s=-(2n+1)/2$, $n\geq 0$ an integer, with residues depending on $a$. When $a$ vanishes
we recover $\zeta_{\ttt_V}(s) = \zeta(2s)$ and there are no poles other than the simple pole at $s=1/2$.
\end{remark}

We are now ready to extend the result in~\cite{Levit-Smilansky} to the case of
$L^{\balpha}$ potentials with $\balpha$ greater than or equal to one.
\begin{theorem} \label{thm0}
The determinant of the operator $\ttt_V =-\Delta + V$ with Dirichlet boun\-dary
conditions and potential $V\in L^\balpha[0,1]$, $\balpha \in[1,+\infty]$,
is given by
\begin{equation}\label{eq:dety1}
 \det\ttt_V = 2y(1),
\end{equation}
where $y$ is the solution of the initial value problem~\eqref{controlproblem}.
\end{theorem}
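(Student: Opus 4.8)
The plan is to compare the zeta-regularised determinant with the entire function $F(\lambda):=y(1;\lambda)$, where $y(\cdot;\lambda)$ solves $-y''+Vy=\lambda y$ with $y(0)=0$, $y'(0)=1$. First I would record that $F$ is entire in $\lambda$: writing the initial value problem as a Volterra integral equation and iterating (Picard), each iterate is a polynomial in $\lambda$ and the series converges locally uniformly already for $V\in L^1[0,1]$, so the construction survives the passage from smooth to $L^\balpha$ potentials. One checks $|F(\lambda)|\le C\exp(c|\lambda|^{1/2})$, so $F$ has order $1/2$; its zeros are exactly the Dirichlet eigenvalues $\lambda_n$, which are simple, and after restricting to $V\ge 0$ (Proposition~\ref{positivitythm}) we have $F(0)=y(1)\neq 0$. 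Since $\sum_n 1/\lambda_n$ converges by the asymptotics~\eqref{eq:evas}, the Hadamard factorisation has genus zero, $F(\lambda)=F(0)\prod_n(1-\lambda/\lambda_n)$.

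Next I would work with the shifted operator and differentiate in the shift. For $\mu\ge 0$ the potential $V+\mu$ is again non-negative and in $L^\balpha$, so Propositions~\ref{heattrace} and~\ref{meromorphicext} apply verbatim to $\ttt_V+\mu$, and $g(\mu):=\log\det(\ttt_V+\mu)=-\zeta_\mu'(0)$ is well defined, where $\zeta_\mu(s)=\sum_n(\lambda_n+\mu)^{-s}$ and $'$ denotes $\partial_s$. By~\eqref{eq:evas} the resolvent is trace class, so $\zeta_\mu(1)=\Tr(\ttt_V+\mu)^{-1}$ converges. Using $\partial_\mu\zeta_\mu(s)=-s\,\zeta_\mu(s+1)$ together with the regularity at the origin from Proposition~\ref{meromorphicext}, I obtain $\partial_\mu\zeta_\mu'(0)=-\zeta_\mu(1)$, hence $g'(\mu)=\Tr(\ttt_V+\mu)^{-1}$. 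On the other hand $D(\mu):=F(-\mu)$ is the energy-zero solution for the potential $V+\mu$, and logarithmic differentiation of the Hadamard product gives $D'(\mu)/D(\mu)=\sum_n 1/(\lambda_n+\mu)=\Tr(\ttt_V+\mu)^{-1}$. Therefore $g(\mu)-\log D(\mu)$ is constant in $\mu$, say $C(V)$.

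It then remains to identify $C(V)$ as a universal constant equal to $\log 2$. For the base case $V\equiv 0$ one has $\zeta_{\ttt_0}(s)=\pi^{-2s}\zeta_R(2s)$, whence, using $\zeta_R(0)=-1/2$ and $\zeta_R'(0)=-\tfrac12\log(2\pi)$, $\zeta_{\ttt_0}'(0)=-\log 2$ and $\det\ttt_0=2$, while $y_0(1)=1$; thus $C(0)=\log 2$. To get $C(V)=C(0)$ for all $V$ it suffices to show that $[g_V(\mu)-g_0(\mu)]-[\log D_V(\mu)-\log D_0(\mu)]\to 0$ as $\mu\to+\infty$, since this quantity equals $C(V)-C(0)$ and is constant. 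The $D$-term tends to zero from the large-$\mu$ WKB asymptotics $D_V(\mu)=\tfrac{e^{\sqrt\mu}}{2\sqrt\mu}\bigl(1+o(1)\bigr)$ (valid for $V\in L^1$, so that $\log D_V(\mu)-\log D_0(\mu)\to 0$), and the determinant difference tends to zero from the comparison $\lambda_n=\pi^2 n^2+\bo(1)$, which forces $\zeta_{V,\mu}'(0)-\zeta_{0,\mu}'(0)\to 0$. Evaluating the identity $\log\det(\ttt_V+\mu)=\log D(\mu)+\log 2$ at $\mu=0$ gives $\det\ttt_V=2\,y(1)$.

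The \emph{main obstacle} is precisely this last identification of the constant under the sole hypothesis $V\in L^\balpha$: one must control both differences as $\mu\to\infty$ quantitatively, deriving the WKB leading term for merely integrable potentials and showing that the regularised quantity $\zeta_{V,\mu}'(0)-\zeta_{0,\mu}'(0)$ vanishes, which requires using~\eqref{eq:evas} together with the uniformity (in $\mu$) of the heat-trace expansion of Proposition~\ref{heattrace}. A secondary technical point is the justification of the interchange giving $\partial_\mu\zeta_\mu'(0)=-\zeta_\mu(1)$, which rests on the $\mu$-local uniformity of the meromorphic continuation obtained from the explicit decomposition in the proof of Proposition~\ref{meromorphicext}. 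Finally, the case $\balpha=+\infty$ (bounded $V$) is covered since $L^\infty[0,1]\subset L^1[0,1]$, so no separate argument is needed.
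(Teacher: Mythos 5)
Your proposal is correct (modulo the technical points you yourself flag, all of which do go through for $V\in L^1$), but it takes a genuinely different route from the paper. The paper proves \eqref{eq:dety1} by the Levit--Smilansky deformation argument: it connects $0$ to $V$ by the linear homotopy $\aalpha V$, $\aalpha\in[0,1]$, invokes Kato's theory of analytic families of type (B) to get analyticity of the eigenvalues $\lambda_k(\aalpha)$ and eigenfunctions, justifies term-by-term differentiation of the zeta series using the eigenvalue asymptotics \eqref{eq:evas} \emph{together with} eigenfunction asymptotics $u_j=\sin(j\pi x)+\bo(1/j)$, and shows that both $\frac{\d}{\d\aalpha}\log\det\ttt_{\aalpha}$ and $\frac{\d}{\d\aalpha}\log y(\aalpha,1)$ equal the same Green's function integral $\int_0^1 G_{\aalpha}(0,t,t)V(t)\,\d t$; integrating in $\aalpha$ and using the explicit case $\aalpha=0$ fixes the constant $2$. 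You instead fix $V$ and move the spectral parameter: Hadamard factorization of the characteristic function $F(\lambda)=y(1;\lambda)$ (entire of order $1/2$, genus $0$), the trace identity $\frac{\d}{\d\mu}\log\det(\ttt_V+\mu)=\zeta_\mu(1)=D'(\mu)/D(\mu)$, and identification of the additive constant by letting $\mu\to+\infty$. Both proofs rest on the Savchuk--Shkalikov asymptotics \eqref{eq:evas} \cite{SavcShka99}; yours dispenses with Kato perturbation theory and with eigenfunction asymptotics entirely, at the price of entire-function theory and quantitative large-$\mu$ control. That control is indeed available at this level of generality: the series $\sum_n\bigl[(\lambda_n+\mu)^{-s}-(\pi^2n^2+\mu)^{-s}\bigr]$ converges absolutely and locally uniformly for $\re (s)>-1/2$, whence $\zeta_{V,\mu}'(0)-\zeta_{0,\mu}'(0)=-\sum_n\log\bigl((\lambda_n+\mu)/(\pi^2n^2+\mu)\bigr)=\bo(\mu^{-1/2})$, and the Volterra equation with kernel $\sinh(\sqrt{\mu}\,(t-s))/\sqrt{\mu}$ gives $D_V(\mu)/D_0(\mu)=1+\bo(\mu^{-1/2})$ for $V\in L^1$. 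One simplification worth recording: these two series representations already show that $\bigl[\log\det(\ttt_V+\mu)-\log\det(\ttt_0+\mu)\bigr]-\bigl[\log D_V(\mu)-\log D_0(\mu)\bigr]$ equals the manifestly $\mu$-independent quantity $-\log y_V(1)+\sum_n\log(\lambda_n/\pi^2n^2)$, so you can avoid the interchange $\partial_\mu\zeta_\mu'(0)=-\zeta_\mu(1)$ altogether, eliminating your ``secondary technical point''.
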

\begin{proof}
We shall follow along the lines of the proof in~\cite[Theorem 1]{Levit-Smilansky} for smooth potentials,
which consists in building a one-parameter family of potentials, $\aalpha V$, connecting the zero potential,
for which the expression for the determinant may be computed explicitly, with the potential $V$, and comparing
the way these two quantities change.
More precisely, the main steps in this approach are as follows. For $\aalpha\in [0,1]$, we define the family of
operators $\ttt_{\aalpha}$ in $L^{2}[0,1]$ by $\ttt_{\aalpha} u(\aalpha, t) :=  -u''(\aalpha,t) + \aalpha V(t) u(\aalpha,t)$
and consider the eigenvalue problem
\begin{equation}\label{eigprob}
  \left\{
 \begin{array}{l}
\ttt_{\aalpha} u(\aalpha, t)=\lambda(\aalpha) u(\aalpha,t)\eqskip
 u(\aalpha,0)=0, \;\; u(\aalpha,1)=0
 \end{array}
 \right.
\end{equation}
with solutions $\lambda_k(\aalpha)$, $u_k(\aalpha)$.
We have that $\{\ttt_{\aalpha}\}_{\aalpha\in [0,1]}$ is an analytic
family in $\aalpha$ of type $B$ in the sense of Kato. This follows from~\cite[Example
VII.4.24]{Kato}, which covers a more general case. Then, by Remark VII.4.22 and Theorem
VII.3.9 in~\cite{Kato} we obtain that the eigenvalues $\lambda_k(\aalpha)$ and its associated (suitably normalized) eigenfunction $u_k(\aalpha)$, for any
$k\geq 1$, are analytic functions of $\aalpha$. 
The corresponding $\zeta$-function is given by
\begin{equation}\label{zetaalpha}
 \zeta_{\ttt_{\aalpha}}(s) = \dsum_{j=1}^{\infty} \lambda_{j}^{-s}(\aalpha).
\end{equation}
Although this series is only defined for $\Re(s)>1/2$, we know from Proposition-\ref{meromorphicext} that the spectral zeta function defined by it
can be extended uniquely to a meromorphic function on the half-plane $\re(s)>-1/2$ which is analytic at zero.
We shall also define $y(\aalpha,t)$ to be the family of solutions of the initial value problem
\begin{equation*}
 \left\{
 \begin{array}{l}
 \ttt_{\aalpha} y(\aalpha,t)=0\eqskip
 y(\aalpha,0)=0, \;\; y'(\aalpha, 0)=1.
 \end{array}
 \right.
\end{equation*}
By Proposition~\ref{prop10} in the next section the quantity $2y(\aalpha,1)$ is well defined for $V \in L^{\balpha}[0,1]$.
The idea of the proof is to show that  
\begin{equation}\label{idlog}
 \frac{\d}{\d\aalpha} \log\left( \det\ttt_{\aalpha} \right) =
 \frac{\d}{\d\aalpha} \log\left( y\left(\aalpha,1\right) \right)
\end{equation}
for $\aalpha\in [0,1]$. Since at $\aalpha=0$, $\det\ttt_0=2 y(0,1)$, it follows that equality of the two functions will still hold
for $\aalpha$ equal to one. The connection between the two derivatives is made through the Green's function of the operator. 
We shall first deal with the left-hand side of identity~\eqref{idlog}, for which we need to differentiate the series defining the spectral
zeta function with respect to both $\aalpha$ and $s$, and then take $s=0$. We begin by differentiating the series in equation~\eqref{zetaalpha}
term by term with respect to $\aalpha$ to obtain
\begin{equation}\label{dzetadalpha}
 \frac{\ds \partial}{\ds \partial \aalpha}\zeta_{\ttt_{\aalpha}}(s) = -s\dsum_{j=1}^{\infty} \frac{\ds \lambda_{j}'(\aalpha)}{\ds \lambda_{j}^{s+1}(\aalpha)},
\end{equation}
where the expression for the derivative of $\lambda_{j}$ with respect to $\aalpha$ is given by
$$ \lambda_j'(\aalpha) = \left(\int_0^1 V(x) u_{j}^2(\aalpha,x) \,\d x\right)
\left(\int_0^1 u_{j}^2(\aalpha,x) \,\d x\right) ^{-1}. $$
For potentials in $L^{1}[0,1]$ we have, as we saw above, that the corresponding eigenvalues of problem~\eqref{eigprob}
satisfy the asymptotics given by~\eqref{eq:evas},
while the corresponding eigenfunctions satisfy
\[
 u_{j}(x) = \sin(j\pi x) + r_{j}(x),
\]
where $r_{j}(x) =  {\rm O}(1/j)$ uniformly in $x$. We thus have
\[
\begin{array}{lll}
 \dint_{0}^{1} u_{j}^{2}(x) \,\d x & = & \dint_{0}^{1} \sin^{2}(j \pi x) \,\d x + 2
 \dint_{0}^{1} r_{j}(x)\sin(j \pi x) \,\d x
 +  \dint_{0}^{1} r_{j}^{2}(x) \,\d x\eqskip
 & = & \frac{\ds 1}{\ds 2} + 2\dint_{0}^{1} r_{j}(x)\sin(j \pi x) \,\d x +
 \dint_{0}^{1} r_{j}^2(x) \,\d x\eqskip
 & = & \frac{\ds 1}{\ds 2} + {\rm O}(1/j).
 \end{array}
\]
In a similar way, the numerator in the expression for $\lambda_{j}'(\aalpha)$ satisfies
\[
\begin{array}{lll}
 \dint_{0}^{1} V(x) u_{j}^{2}(x) \,\d x & = & \dint_{0}^{1} V(x)\sin^{2}(j \pi x) \,\d
 x + 2  \dint_{0}^{1} V(x) r_{j}(x)\sin(j \pi x) \,\d x\eqskip
 & & \hspace*{1cm} +  \dint_{0}^{1}  V(x)r_{j}^{2}(x) \,\d x\eqskip
 & = & \frac{\ds 1}{\ds 2}\dint_{0}^{1} V(x)\left[1-\cos(2j\pi x)\right] \,\d x + {\rm O}(1/j)\eqskip
 & = & \frac{\ds 1}{\ds 2}\dint_{0}^{1} V(x) \,\d x - \frac{\ds 1}{\ds 2}\dint_{0}^{1}
 V(x)\cos(2j\pi x) \,\d x + {\rm O}(1/j)\eqskip
 & = & \frac{\ds 1}{\ds 2}\dint_{0}^{1} V(x) \,\d x  + {\rm O}(1/j),
\end{array}
 \]
where the last step follows from the Riemann-Lebesgue Lemma. Combining the two asymptotics we thus have

\[
 \lambda_{j}'(\aalpha) = \dint_{0}^{1} V(x) \,\d x + {\rm o}(1)
\]
and so the term in the series~\eqref{dzetadalpha} is of order ${\rm O}(j^{-2s-2})$. This series is thus absolutely convergent (and uniformly convergent in
$\aalpha$) for $\Re(s)>-1/2$. This justifies the differentiation term by term, and also makes it possible to now differentiate it with respect to $s$
to obtain
\[
  \frac{\ds \partial^2}{\ds \partial s\partial\aalpha }\zeta_{\ttt_{\aalpha}}(s) = \dsum_{j=1}^{\infty}\left[
  -1+ s \log\left(\lambda_{j}(\aalpha)\right)\right]\frac{\ds\lambda_{j}'(\aalpha)}{\lambda_{j}^{s+1}(\aalpha)},
\]
which is uniformly convergent for $s$ in a neighbourhood of zero and $\aalpha$ in $[0,1]$. We thus obtain
\[
 \left.\frac{\ds \partial}{\ds \partial \aalpha}\frac{\ds\partial}{\ds \partial s}\zeta_{\ttt_{\aalpha}}(s)\right|_{s=0}=
 -\dsum_{j=1}^{\infty}\frac{\ds\lambda_{j}'(\aalpha)}{\lambda_{j}(\aalpha)}
\]
and
\[
\begin{array}{lll}
 \frac{\ds\d}{\ds\d\aalpha} \log\left[ \det\ttt_{\aalpha}\right] & = &
 \dsum_{j=1}^{\infty}\frac{\ds\lambda_{j}'(\aalpha)}{\lambda_{j}(\aalpha)}\eqskip
 & = & \dsum_{j=1}^{\infty}\lambda_j(\aalpha)^{-1}  \int_0^1
 \frac{u_{j}(\aalpha,x)^2}{\|u_j(\aalpha)\|^2} \ V(t) \,\d t\eqskip
 & = & \dint_0^1 \dsum_{k=1}^{\infty} \lambda_k(\aalpha)^{-1}
 \frac{u_{k}(\aalpha,t)}{\|u_k(\aalpha)\|} \frac{u_{k}(\aalpha,t)}{\|u_k(\aalpha)\|} V(t)
 \,\d t\eqskip
 & = & \dint_0^1 G_{\aalpha}(\lambda=0,t,t) V(t) \,\d t \eqskip
 & = & \Tr(\ttt_{\aalpha}^{-1} V).
\end{array}
\]
Here $G_{\aalpha}(\lambda=0,t,t)$ is the restriction to the diagonal of the Green's function of the boundary value problem~\eqref{eigprob}
at $\lambda=0$. The exchange between the integral and the summation may be justified as above.
We will now consider the right-hand side in~\eqref{idlog}. Here we follow exactly the same computation as in \cite{Levit-Smilansky}.
For that we consider 
$$z(\aalpha,t)= \frac{\d}{\d\aalpha}y(\aalpha,t).$$
Then $z(\aalpha,t)$ is a solution to the initial value problem:
\begin{equation*}
 \left\{
 \begin{array}{l}
 \ttt_{\aalpha} z(\aalpha,t)=-V(t) y(\aalpha,t)\eqskip
 z(\aalpha,0)=0, \;\; (dz/dt) (\aalpha, 0)=0.
 \end{array}
 \right.
\end{equation*}
Using the variation of parameters formula, the solution of this problem is given by
$$z(\aalpha,t)= \frac{1}{W} \left[y(\aalpha,t) \dint_0^t y(\aalpha,r)
\wt{y}(\aalpha,r)V(r) \,\d r 
- \wt{y}(\aalpha,t) \int_0^t y(\aalpha,r) y(\aalpha,r)V(r) \,\d r \right],$$
where the Wronskian $W = y(\aalpha,t) \frac{\d\wt{y}}{\d t}(\aalpha,t)
-\wt{y}(\aalpha,t) \frac{\d y}{\d t}(\aalpha,t)$ is constant, and 
$\wt{y}(\aalpha,t)$ is the solution to the adjoint problem
\begin{equation*}
 \left\{
 \begin{array}{l}
 \ttt_{\aalpha} \wt{y}(\aalpha,t) =0\eqskip
 \wt{y}(\aalpha,1)=0, \;\; (d\wt{y}/dt) (\aalpha, 1)= 1.
 \end{array}
 \right.
\end{equation*}
Therefore we obtain
$$z(\aalpha,1) = y(\aalpha,1) \frac{1}{W} \int_0^1 y(\aalpha,r) \wt{y}(\aalpha,r)V(r)
\,\d r = 
y(\aalpha,1) \int_0^1 G_{\aalpha}(\lambda=0,r,r) V(r) \,\d r,$$
from which the identity~\eqref{idlog} follows. Integrating this with respect to $\aalpha$ yields
$$ \det\ttt_{\aalpha} = c y(\aalpha,1),$$
where $c$ is a constant independent of $\aalpha$. Since $\det\ttt_{0} = 2 y(0,1)$, the result follows.
\end{proof}

We shall finish this section with the example of the pulse potential, of which the optimal potential in the $L^1$ case is
a particular case.
\begin{expl}
Let $S=[x_1,x_2]\subseteq [0,1]$ and $m>0$. A long but straightforward computation shows that the solution of 
$$-y'' + m \chi_S y = 0, \quad \quad y(0)=0, \quad y'(0)=1$$
is given by
$$y(t)= \begin{cases}
t & \ t \in  [0,x_1]\\
a e^{\sqrt{m}t} + b e^{-\sqrt{m}t} & \ t \in [x_1,x_2],\\
c t + d & \ t \in [x_2,1],
\end{cases}
$$ with
\begin{eqnarray*}
a = \frac{1}{2} \left(x_1 + \frac{1}{\sqrt{m}}\right) e^{-\sqrt{m}x_1},  &  &  
b = \frac{1}{2} \left(x_1 - \frac{1}{\sqrt{m}}\right) e^{\sqrt{m}x_1},\\
c = \sqrt{m}  \left(a e^{\sqrt{m}x_2} -b e^{-\sqrt{m}x_2}\right) ,  &  & 
d = a e^{\sqrt{m}x_2} \left(1-\sqrt{m}x_2\right) + b e^{-\sqrt{m}x_2}\left(1+\sqrt{m}x_2\right)
\end{eqnarray*}
\noi Therefore the functional determinant of the operator 
$\ttt = -d^2/dx^2 + m \chi_S$ with Dirichlet boundary conditions is given by
\begin{multline*}
\det(\Delta + m \chi_R) = 2 y(1) = e^{-\sqrt{m}x_1} e^{\sqrt{m}x_2} (x_1 + \frac{1}{\sqrt{m}}) (1 + \sqrt{m} - \sqrt{m}x_2) \\
+ e^{\sqrt{m}x_1} e^{-\sqrt{m}x_2} (x_1 - \frac{1}{\sqrt{m}}) (1 - \sqrt{m} + \sqrt{m}x_2). 
\end{multline*}
\end{expl}

\section{Some properties of the determinant} \label{s2}
\noi Let us denote $\DD$ the operator mapping a potential $V$ in $L^{\balpha}[0,1]$ to 
$\DD_V := y(1)$, where $y$ is the solution of
\[ -y''+Vy=0,\quad y(0)=0, \quad y'(0)=1. \]

\begin{prop} \label{prop10} The operator $\DD$
is well defined, Lipschitz on bounded sets of $L^{\balpha}$ (hence continuous), and
non-negative for non-negative potentials.
\end{prop}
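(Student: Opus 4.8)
The plan is to reduce the initial value problem defining $\DD$ to a Volterra integral equation, from which well-posedness follows at once, and then to extract both the Lipschitz bound and the sign property from Gronwall-type estimates. Throughout I would use that, since $[0,1]$ has unit length, $\|V\|_1 \le \|V\|_\balpha$ for every $\balpha\ge 1$, so that any $L^\balpha$ potential is in particular integrable and every estimate can be anchored to the $L^\balpha$-norm. Integrating $-y''+Vy=0$, $y(0)=0$, $y'(0)=1$ twice gives
\[
 y(t) = t + \dint_0^t (t-s)\, V(s)\, y(s)\, \d s ,
\]
and conversely any continuous solution of this equation has $y'(t)=1+\int_0^t Vy$, which is absolutely continuous, so $y\in C^1[0,1]$ and the differential equation holds almost everywhere. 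Existence and uniqueness of a continuous solution on $[0,1]$ then follow from the standard theory of Volterra equations with $L^1$ kernel in the Carath\'eodory framework: the associated integral operator is quasi-nilpotent, its $n$-th iterate carrying a factor $\|V\|_1^{\,n}/n!$, so the Neumann series converges in $C[0,1]$. This settles well-definedness and shows $\DD_V=y(1)$ is meaningful for all $V\in L^\balpha[0,1]$.

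For the Lipschitz property I would fix $R>0$ and potentials $V_1,V_2$ with $\|V_i\|_\balpha\le R$. Applying Gronwall's inequality to $|y_i(t)|\le 1+\int_0^t|V_i|\,|y_i|$ (using $0\le t-s\le 1$ and $t\le 1$) yields the a priori bound $\|y_i\|_\infty\le e^{\|V_i\|_1}\le e^{R}=:M$. The difference $w=y_1-y_2$ then satisfies
\[
 w(t) = \dint_0^t (t-s)\, V_1(s)\, w(s)\, \d s + \dint_0^t (t-s)\bigl(V_1(s)-V_2(s)\bigr)\, y_2(s)\, \d s ,
\]
whose last term is bounded by $M\|V_1-V_2\|_1\le M\|V_1-V_2\|_\balpha$. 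A second application of Gronwall gives $\|w\|_\infty\le e^{2R}\|V_1-V_2\|_\balpha$, hence $|\DD_{V_1}-\DD_{V_2}|=|w(1)|\le e^{2R}\|V_1-V_2\|_\balpha$, which is Lipschitz continuity on bounded sets and in particular continuity.

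Finally, for non-negativity I would assume $V\ge 0$ and argue by a monotonicity (maximum-principle) argument. Since $y(t)\sim t$ near the origin, the set on which $y>0$ contains an interval $(0,t^*)$ with $t^*>0$ maximal; on it $y'(t)=1+\int_0^t Vy\ge 1$, so $y(t)\ge t$ there. By continuity $y(t^*)\ge t^*>0$, and were $t^*<1$ this would let $y$ stay positive slightly beyond $t^*$, contradicting maximality; hence $t^*=1$, $y(t)\ge t$ on all of $[0,1]$, and $\DD_V=y(1)\ge 1>0$. The only step requiring genuine care is the well-posedness one: the coefficient $V$ is only $L^1$, so classical Picard--Lindel\"of does not apply directly and one must work through the Volterra/Carath\'eodory formulation; once this is in place, the remaining estimates are routine Gronwall arguments resting on the inequality $\|V\|_1\le\|V\|_\balpha$.
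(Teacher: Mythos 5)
Your proposal is correct and takes essentially the same approach as the paper: both rest on the inequality $\|V\|_1\le\|V\|_\balpha$, on Gronwall-type estimates applied to the integral (equivalently, first-order system) formulation to get well-posedness and the Lipschitz bound on bounded sets, and on the observation that $y'(t)=1+\int_0^t V y\,\d s\ge 1$ as long as $y$ stays non-negative for the positivity claim. The only differences are cosmetic: you construct the solution via the Neumann series for the Volterra equation where the paper invokes Carath\'eodory's theorem for the system $x'=C(V)x$, and you phrase positivity as a continuation argument where the paper argues by contradiction through an interior maximum.
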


\begin{proof} For $V \in L^{\balpha}[0,1]$,
local existence and uniqueness holds by Caratheodory for $\dot{x}=C(V)x$ with
\begin{equation} \label{eqb1}  
  C(V) := \left[ \begin{array}{cc}
  0 & 1\\
  V & 0 \end{array} \right],\quad x:=(y,\dot{y}),\quad x(0)=(0,1).
\end{equation}
Gronwall's lemma implies that the solution is defined up to $t=1$ and that
\begin{equation} \label{eqb3}
  |x(1)| \leq \exp(1+\|V\|_1).
\end{equation}
Let $V_1$ and $V_2$ in $L^{\balpha}[0,1]$ be both of norm less or equal to $A$.
Denote $x_1$ and $x_2$ the corresponding solutions defined as in (\ref{eqb1}). One has
\[ \dot{x}_1-\dot{x}_2 = (C(V_1)-C(V_2))x_1 + C(V_2)(x_1-x_2), \]
integrating one obtains
\[ |x_1(t)-x_2(t)| \leq \|x_1\|_\iy \|V_1-V_2\|_1
 + \int_0^t |C(V_2(s))|\cdot|x_1(s)-x_2(s)|\,\d s. \]
By the integral version of Gronwall's inequality we obtain
\begin{eqnarray*}
  |x_1(1)-x_2(1)| & \leq & \|x_1\|_\iy \|V_1-V_2\|_1 \exp(1+\|V_2\|_1), \\
  & \leq & e^{2(1+A)} \|V_1-V_2\|_1,
\end{eqnarray*}
implying that $\DD$ is Lipschitz on bounded sets. 
Let finally $V$ be non-negative, and assume by contradiction
that the associated $y$ first vanishes at $\bar{t} > 0$.
As $y(0)=y(\bar{t})=0$ and $\dot{y}(0)>0$,
the function must have a positive maximum at some $\tau \in (0,\bar{t})$.
The function $y$ being
continuously differentiable, $\dot{y}(\tau)=0$. Now,
\[ \dot{y}(\tau) = \dot{y}(0)+\int_0^{\tau}\ddot{y}(t)\,\d t, \]
while $\ddot{y}=Vy \geq 0$ since $V \geq 0$ and $y \geq 0$ on $[0,\tau] \subset
[0,\bar{t}]$. Then $\dot{y}(\tau) \geq 1$, contradicting the definition of
$\tau$.\end{proof}

\noi Being Lipschitz on bounded sets,
the operator $\DD$ sends Cauchy sequences in $L^{\balpha}[0,1]$
to Cauchy sequences in $\R$ (Cauchy-continuity). So its restriction to the dense
subset of smooth functions has a unique continuous extension to the whole space. As
this restriction is equal to the halved
determinant whose definition for smooth potentials is recalled in Section~\ref{s1} for
the operator $-\Delta+V$ with Dirichlet boundary conditions,
$\DD_V$ is indeed the unique continuous extension of
this determinant to $L^\balpha[0,1]$ and, in agreement with Theorem~\ref{thm0},
\[ \DD_V = \frac{1}{2}\det\ttt_V. \]
We begin by proving a uniform upper bound on the maximum value of $y(1)$ for the control problem~\eqref{controlproblem}, and thus for
the determinant of the original Schr\"{o}dinger operator given by~\eqref{schrodinger}.

\begin{prop}\label{thmupperbound}
Assume the potential $V$ is in $L^{\balpha}[0,1]$, $\balpha \in [1,\infty]$.
Then
\[
  \left| \DD_V-1\right|\leq \dsum_{m=1}^{\infty} \frac{\ds \|V\|_{1}^{m}}{\ds (m+1)^{m+1}}.
\]
\end{prop}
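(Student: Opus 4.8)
The plan is to recast the initial value problem defining $\DD_V$ as a Volterra integral equation, solve it by a Neumann (Picard) series, and then estimate each term of that series by exactly $\|V\|_1^m/(m+1)^{m+1}$. First I would integrate $-y''+Vy=0$, $y(0)=0$, $y'(0)=1$ twice and interchange the order of integration in the resulting double integral to obtain the fixed-point equation
\[
 y(t) = t + \int_0^t (t-s)V(s)y(s)\,\d s,\qquad t\in[0,1].
\]
Since $\DD_V = y(1)$ and the leading term evaluated at $t=1$ equals $1$, the quantity to control is precisely the ``tail'' of the series solution of this equation. Note that for $\balpha\in[1,\infty]$ we have $L^\balpha[0,1]\subseteq L^1[0,1]$ with $\|V\|_1\leq\|V\|_\balpha<\infty$, so all the bounds below make sense.

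Next I would set up the iteration $y_0(t):=t$ and $y_{m+1}(t):=\int_0^t (t-s)V(s)y_m(s)\,\d s$, so that formally $y=\sum_{m\geq 0} y_m$. The heart of the proof is the pointwise estimate
\[
 |y_m(t)| \leq \frac{t^{m+1}}{(m+1)^{m+1}}\,\Phi(t)^m,\qquad \Phi(t):=\int_0^t |V(s)|\,\d s,
\]
which I would prove by induction on $m$. The base case $m=0$ holds with equality. For the inductive step, starting from $|y_{m+1}(t)|\leq \frac{1}{(m+1)^{m+1}}\int_0^t (t-s)s^{m+1}|V(s)|\Phi(s)^m\,\d s$, I would use two ingredients: the elementary maximization of the kernel factor, namely $\max_{0\leq s\leq t}(t-s)s^{m+1} = \frac{(m+1)^{m+1}}{(m+2)^{m+2}}\,t^{m+2}$ (attained at $s=(m+1)t/(m+2)$), and the identity $\int_0^t |V(s)|\Phi(s)^m\,\d s = \Phi(t)^{m+1}/(m+1)$, which follows because $\Phi$ is absolutely continuous with $\Phi'=|V|$ a.e. Combining them yields $|y_{m+1}(t)| \leq \frac{t^{m+2}}{(m+2)^{m+2}}\cdot\frac{\Phi(t)^{m+1}}{m+1}$, and since $1/(m+1)\leq 1$ this gives exactly the claimed bound at index $m+1$, closing the induction.

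The main obstacle is this calibrated maximization: one must verify that the supremum of $(t-s)s^{m+1}$ produces precisely the factor $(m+1)^{m+1}/(m+2)^{m+2}$, so that the $(m+1)^{m+1}$ coming from the inductive hypothesis cancels and leaves the sharp denominator $(m+2)^{m+2}$. Everything else is routine calculus, but getting these constants to line up exactly is what makes the final bound clean rather than merely of the right order.

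Finally, evaluating at $t=1$ gives $|y_m(1)|\leq \|V\|_1^m/(m+1)^{m+1}$ (using $\Phi(1)=\|V\|_1$). The bounds $\|V\|_1^m/(m+1)^{m+1}$ are summable for every value of $\|V\|_1$ (by the ratio test, since $(m+1)^{m+1}/(m+2)^{m+2}\to 0$), so by the Weierstrass $M$-test $\sum_{m\geq 0} y_m$ converges uniformly on $[0,1]$; its sum is continuous, satisfies the Volterra equation, and hence coincides with the unique solution $y$. Therefore
\[
 |\DD_V - 1| = \Bigl|\sum_{m\geq 1} y_m(1)\Bigr| \leq \sum_{m\geq 1} |y_m(1)| \leq \sum_{m=1}^\infty \frac{\|V\|_1^m}{(m+1)^{m+1}},
\]
which is the assertion.
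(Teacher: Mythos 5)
Your proposal is correct and takes essentially the same approach as the paper: the same Volterra integral equation, the same Picard/Neumann iteration, and the same calibrated kernel maximization $\max_{0\leq s\leq t}(t-s)s^{m+1}=(m+1)^{m+1}t^{m+2}/(m+2)^{m+2}$ driving the induction, with the general case $\balpha\in[1,\infty]$ reduced to $\balpha=1$ via $\|V\|_1\leq\|V\|_\balpha$. The only cosmetic differences are that you sum the Neumann terms $y_m$ directly rather than telescoping differences of Picard iterates, and you invoke the exact identity $\int_0^t|V(s)|\Phi(s)^m\,\d s=\Phi(t)^{m+1}/(m+1)$ (then discard the factor $1/(m+1)$) where the paper simply bounds $\Phi(s)\leq\Phi(t)$.
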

\begin{proof}
To prove the proposition it is enough to treat the case $\balpha=1$.
The initial value problem given by equation~\eqref{controlproblem} is equivalent to the integral equation
 \[
  y(t) = t + {\ds\int_{0}^{t}}(t-s)V(s)y(s) \, \mbox{ds}.
 \]
We now build a standard iteration scheme defined by
\begin{equation}\label{iterationsch}
 \left\{
 \begin{array}{l}
 y_{m+1}(t)=t+{\ds\int_{0}^{t}}(t-s)V(s)y_{m}(s) \, \mbox{ds},\eqskip
 y_{0}(t)=t
 \end{array}
 \right.
\end{equation}
which converges to the solution of equation~\eqref{controlproblem} -- this is a classical result from the theory of ordinary differential
equations which may be found, for instance, in~\cite{CL}, and which also follows from the computations below.

We shall now prove by induction that
\begin{equation}
 \label{indhyp}
 |y_{m}(t)-y_{m-1}(t)|\leq \frac{\ds t^{m+1}}{\ds (m+1)^{m+1}}\left[{\ds\int_{0}^{t}}|V(s)|\, \mbox{ds}\right]^{m}.
\end{equation}
From~\eqref{iterationsch} it follows that
\[
 y_{1}(t)-y_{0}(t)={\ds\int_{0}^{t}} (t-s) V(s) y_{0}(s) \, \mbox{ds}
\]
and thus
\[
 |y_{1}(t)-y_{0}(t)|\leq {\ds\int_{0}^{t}}s ( t-s) |V(s)|  \, \mbox{ds}.
\]
If we define the sequence of functions $f_{m}$ by
\[
 f_{m}(s) = (t-s)\frac{\ds s^{m+1}}{\ds (m+1)^{m+1}},
\]
the above may be written as
\[
 |y_{1}(t)-y_{0}(t)|\leq {\ds\int_{0}^{t}} f_{0}(s) |V(s)|  \, \mbox{ds}.
\]
Since $f_{0}(s) = s(t-s)\leq t^2/4$, we have that
\[
 |y_{1}(t)-y_{0}(t)|\leq \frac{\ds t^{2}}{\ds 4}{\ds\int_{0}^{t}} |V(s)|  \, \mbox{ds},
\]
and thus the induction hypothesis~\eqref{indhyp} holds for $m=1$.
Assume now that~\eqref{indhyp} holds.
It follows from~\eqref{iterationsch} that
\[
\begin{array}{lll}
 |y_{m+1}(t)-y_{m}(t)| & = & \left|{\ds\int_{0}^{t}} (t-s) V(s)\left[y_{m}(s)-y_{m-1}(s)\right]\, \mbox{ds}\right|\eqskip
 & \leq & {\ds\int_{0}^{t}} (t-s)\cdot |V(s)|\cdot \left|y_{m}(s)-y_{m-1}(s)\right|\, \mbox{ds}
\end{array}
 \]
 and, using~\eqref{indhyp}, we obtain
 \[
 \begin{array}{lll}
  |y_{m+1}(t)-y_{m}(t)| & \leq & {\ds\int_{0}^{t}} (t-s) |V(s)| \frac{\ds s^{m+1}}{\ds (m+1)^{m+1}}
  \left[{\ds\int_{0}^{s}}|V(r)|\, \mbox{dr}\right]^{m}  \, \mbox{ds}\eqskip
  & = & {\ds\int_{0}^{t}}  f_{m}(s)|V(s)| \left[{\ds\int_{0}^{s}}|V(r)|\, \mbox{dr}\right]^{m}  \, \mbox{ds}.
 \end{array}
 \]
Differentiating $f_{m}(s)$ with respect to $s$ and equating to zero, we obtain that this is maximal for $s=(m+1)t/(m+2)$, yielding
\[
 f_{m}(s)\leq \frac{\ds t^{m+2}}{\ds (m+2)^{m+2}}
\]
and so
\[
 |y_{m+1}(t)-y_{m}(t)|\leq \frac{\ds t^{m+2}}{\ds (m+2)^{m+2}}\left[{\ds\int_{0}^{t}}|V(s)|\, \mbox{ds}\right]^{m+1}
\]
as desired. Hence
\[
 \dsum_{m=0}^{\infty} |y_{m+1}(t)-y_{m}(t)|\leq \dsum_{m=0}^{\infty} \frac{\ds t^{m+2}}{\ds (m+2)^{m+2}}\left[{\ds\int_{0}^{t}} |V(s)|\, \mbox{ds} \right]^{m+1}
\]
On the other hand,
\[
 \dsum_{m=0}^{\infty} |y_{m+1}(t)-y_{m}(t)|=-y_{0} + \lim_{m\to\infty} y_{m}(t) = y(t)-t,
\]
yielding
\[
 |y(t)-t| \leq \dsum_{m=0}^{\infty} \frac{\ds t^{m+2}}{\ds (m+2)^{m+2}} \left[{\ds\int_{0}^{t}} |V(s)|\, \mbox{ds} \right]^{m+1}.
\]
Taking $t$ to be one finishes the proof.
\end{proof}

We shall finally present a proof of the fact that in order to maximize the determinant it is sufficient to consider non-negative
potentials. This is based on a comparison result for linear second order ordinary differential equations which we believe to
be interesting in its own right, but which we could not find in the literature.

\begin{prop}
 \label{positivitythm}
 Assume $V$ is in $L^\balpha[0,T]$, and let $u$ and $v$ be the solutions of the initial value problems defined by
 \[
\begin{array}{lll}
  \left\{
 \begin{array}{l}
 -u''(t)+|V(t)|u(t)=0\eqskip
 u(0)=0, \;\; u'(0)=1
 \end{array}
 \right.
& and &
 \left\{
 \begin{array}{l}
 -v''(t)+V(t)v(t)=0\eqskip
 v(0)=0, \;\; v'(0)=1
 \end{array}
 \right.
\end{array}.
\]
Then $u(t)\geq v(t)$ for all $0\leq t\leq T$.
\end{prop}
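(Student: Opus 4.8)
The plan is to recast both initial value problems as Volterra integral equations and to run the Picard iteration from the same starting function, exploiting the pointwise inequality $|V| \geq V$ together with the sign of the integration kernel. Since $[0,T]$ is bounded we have $L^\balpha[0,T] \subseteq L^1[0,T]$, so both $V$ and $|V|$ lie in $L^1$ and the machinery of the previous propositions applies. As in the proof of Proposition~\ref{thmupperbound}, the two problems are equivalent to
\[ u(t) = t + \int_0^t (t-s)|V(s)|u(s)\,\d s, \qquad v(t) = t + \int_0^t (t-s)V(s)v(s)\,\d s. \]

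First I would introduce the iteration schemes $u_0(t)=v_0(t)=t$ together with
\[ u_{m+1}(t) = t + \int_0^t (t-s)|V(s)|u_m(s)\,\d s, \qquad v_{m+1}(t) = t + \int_0^t (t-s)V(s)v_m(s)\,\d s, \]
which, by the classical argument recalled in the proof of Proposition~\ref{thmupperbound}, converge uniformly on $[0,T]$ to $u$ and $v$ respectively. The heart of the proof is the claim that $u_m(t) \geq |v_m(t)|$ for every $m\geq 0$ and every $t\in[0,T]$, which I would prove by induction. The base case holds with equality since $u_0(t)=t=|t|=|v_0(t)|$ for $t\geq 0$. For the inductive step, the triangle inequality and the nonnegativity of $(t-s)|V(s)|$ on $\{s\leq t\}$ give
\[ |v_{m+1}(t)| \leq t + \int_0^t (t-s)|V(s)|\,|v_m(s)|\,\d s \leq t + \int_0^t (t-s)|V(s)|u_m(s)\,\d s = u_{m+1}(t), \]
where the second inequality uses the induction hypothesis $|v_m|\leq u_m$. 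Passing to the limit $m\to\infty$ then yields $u(t) \geq |v(t)| \geq v(t)$ on $[0,T]$, which is the assertion.

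The reason for comparing $u$ with $|v|$ rather than directly with $v$ is the main subtlety, and identifying the right quantity to dominate is the delicate part. Setting $w=u-v$ one finds $-w''+|V|w = (V-|V|)v$ with $w(0)=w'(0)=0$, and although the coefficient $V-|V|$ is nonpositive, the sign of the forcing term depends on that of $v$, which may oscillate and change sign when $V$ is large and negative; a naive maximum-principle comparison on $w$ therefore fails. Dominating $|v|$ by $u$ at each step of the iteration circumvents this difficulty, since the iteration replaces $V$ by $|V|$ and thereby monotonises the integrand. The only remaining point requiring care is the justification of the termwise passage to the limit, which is supplied by the uniform convergence of both Picard sequences already established for $L^1$ potentials in Proposition~\ref{thmupperbound}.
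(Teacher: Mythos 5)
Your proof is correct, but it takes a genuinely different route from the paper's. The paper argues pointwise on the solutions themselves: setting $w=u-v$, it first shows that $w$ stays identically zero while $V$ remains essentially non-negative, and becomes positive just after the first time $t_0$ at which $V$ takes negative values on sets of positive measure (since $w''=|V|w+(|V|-V)v\geq 0$ there, with strict positivity on such sets); it then proves persistence of the lead via the quantity $z=u^2-v^2$, writing $z''\geq a(t)z'$ with $a(t)=\bigl((u')^2-(v')^2\bigr)/(uu'-vv')$ and integrating this differential inequality to keep $z'>0$, hence $z>0$, for all later times. Your route --- rewriting both problems as Volterra equations, running the Picard iteration from $u_0=v_0=t$, and propagating the inequality $|v_m|\leq u_m$ through the non-negative kernel $(t-s)|V(s)|$ --- reaches the same (indeed the same stronger) conclusion $u\geq |v|$ with considerably less machinery: no case analysis on the sign structure of $V$, no auxiliary function $a(t)$ (whose well-definedness and boundedness near $t_1$ is a delicate point in the paper's argument), and uniform applicability to any $L^1$ potential; the limit passage you invoke is exactly the uniform convergence already established in the proof of Proposition~\ref{thmupperbound}, so nothing new is needed there. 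It is worth noting that the paper's device $z=u^2-v^2$ is, in substance, also a comparison of $u$ with $|v|$, so both proofs identify the same correct quantity to dominate, as you observe. The one thing the paper's approach buys that yours does not immediately give is the remark following the proposition: the comparison generalises to two potentials $V_1\geq V_2$ with $V_1$ non-negative, whereas your induction yields that generalisation only under the stronger hypothesis $V_1\geq |V_2|$, because the triangle-inequality step replaces $V_2$ by $|V_2|$.
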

\begin{proof}
 The proof is divided into two parts. We first show that if the potential $V$ does not remain essentially non-negative, then $u$ must
 become larger than $v$ at some point, while never being smaller for smaller times. We then prove that once $u$ is strictly larger
 than
 $v$ for some time $t_{1}$, then it must remain larger for all $t$ greater than $t_{1}$.
 
 We first note that solutions of the above problems are at least in $AC^2[0,T]$, and
 thus continuously differentiable on $[0,T]$.
 Furthermore, $u$ is always positive,
 since $u(0)=0$, $u'(0)=1$ and $u''(0)\geq0$.
 Let now $w=u-v$. While $V$ remains essentially non-negative, $v$ also remains positive and $w$ vanishes identically.
 Assume now that there exists a time $t_{0}$ such that for $t$ in $(0,t_{0})$ the potential $V$ is essentially non-negative and on arbitrarily small
 positive neighbourhoods of $t_{0}$ $V$ takes on negative values on sets of positive measure. Then $w''(t)=|V(t)|w(t)+\left[ |V(t)|-V(t)\right]v$.
 close to $t_{0}$ (and zero elsewhere on these small neighbourhoods of $t_{0}$).
 Since $w(t_{0})=w'(t_{0})=0$ and $w''$ is non-negative and strictly positive on sets of positive measure contained in these neighbourhoods, $w$
 will take on positive values on arbitrarily small positive neighbourhoods of $t_{0}$.
 Define now $z(t) = u^{2}(t)-v^{2}(t)$. Then $z'(t) = u(t)u'(t)-v(t)v'(t)$ and
 \[
  \begin{array}{lll}
   z''(t) & = & \left[u'(t)\right]^2+u(t)u''(t)-\left[v'(t)\right]^2-v(t)v''(t)\eqskip
   & = & \left[u'(t)\right]^2-\left[v'(t)\right]^2 + |V(t)|z(t) + \left[|V(t)|-V(t)\right]v^2.
  \end{array}
 \]
 From the previous discussion above, we may assume the existence of a positive value $t_{1}$ such that both $z$ and $z'$ are positive at $t_{1}$ (and thus
 in a small positive neighbourhood $t_{1},t_{1}+\delta))$, while $z$ is non-negative for all $t$ in $(0,t_{1})$. Letting
 \[
  a(t) = \frac{\ds \left[u'(t)\right]^2-\left[v'(t)\right]^2}{\ds u(t)u'(t)-v(t)v'(t)},
 \]
which is well-defined and bounded on $[t_{1},t_{1}+\delta)$, we may thus write
 \[
  \begin{array}{lll}
   z''(t) & = & a(t)z'(t)+|V(t)|z(t) + \left[|V(t)|-V(t)\right]v^2 \eqskip
   & \geq & a(t) z'(t),
   \end{array}
 \]
 for $t$ in $[t_{1},t_{1}+\delta)$. Then $z''(t)-a(t)z(t)\geq0$ and upon multiplication by
 \[
 e^{-{\ds\int_{t_{1}}^{t}} a(s)ds}
 \]
 and integration between $t_{1}$ and $t$ we obtain
 \[
  z'(t)\geq e^{{\ds\int_{t_{1}}^{t}} a(s)ds}z'(t_{1}).
 \]
This yields the positivity of $z'(t)$ (and thus of $z(t)$) for $t$ greater than $t_{1}$. Combining this with the first part of the proof shows that
$z$ is never negative for positive $t$.
\end{proof}

\noi Henceforth, we restrict the search for maximizing potentials to non-negative functions.
Besides, it is clear from the proof that the result may be generalised to the comparison
of two potentials $V_{1}$ and $V_{2}$ where $V_{1}\geq V_{2}$, provided $V_{1}$ is non-negative.


\section{Maximization of the determinant over $L^1$ potentials\label{s3}} 
\noi By virtue of the analysis of the previous sections, problem (\ref{prob}) for
$\balpha=1$ can be recast as the following optimal control problem:
\begin{equation} \label{eq0}
  y(1) \to \max,
\end{equation}
\begin{equation} \label{eq1}
-\ddot{y}+Vy=0,\quad y(0)=0,\quad \dot{y}(0)=1,
\end{equation}
over all non-negative potentials $V \geq 0$ in $L^1[0,1]$ such that
\begin{equation} \label{eq1a}
 \int_0^1 V(t)\,\d t \leq A
\end{equation}
for fixed positive $A$.
In order to prove Theorem~\ref{thm1},
a family of auxiliary problems is introduced: in addition to (\ref{eq1}), the
potentials are assumed essentially bounded and such that
\begin{equation} \label{eq2}
  0 \leq V(t) \leq B,\quad \text{a.a.\ }t \in [0,1],
\end{equation}
for a fixed positive $B$. To avoid the trivial solution $V \equiv B$, we
suppose $B>A$ and henceforth study the properties of problem (\ref{eq0}-\ref{eq2}).

Setting $x:=(y,\dot{y},x_3)$, the auxiliary problem can be rewritten $-x_1(1) \to \min$
under the dynamical constraints
\begin{eqnarray} \label{eq4a}
  \dot{x}_1 &=& x_2,\\
  \dot{x}_2 &=& Vx_1,\\
  \dot{x}_3 &=& V, \label{eq4c}
\end{eqnarray}
$V$ mesurable valued in $[0,B]$,
and the boundary conditions $x(0)=(0,1,0)$, free $x_1(1)$ and $x_2(1)$, $x_3(1) \leq A$.

\begin{prop} Every auxiliary problem has a solution.
\end{prop}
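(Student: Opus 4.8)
The plan is to run the direct method of the calculus of variations, taking advantage of the fact that the dynamics \eqref{eq4a}--\eqref{eq4c} is \emph{affine} in the control $V$: as $V$ ranges over $[0,B]$, the velocity vector $(x_2, V x_1, V) = (x_2,0,0) + V(0,x_1,1)$ traces a line segment, so the velocity set is automatically convex and no Filippov--Cesari relaxation is needed. First I would note that the admissible set is nonempty, since $V \equiv 0$ satisfies \eqref{eq2} and \eqref{eq1a} (recall $B>A>0$), and that the infimum of $-x_1(1)$ is finite: arguing by Gronwall exactly as in the proof of Proposition~\ref{prop10} and using $\|V\|_1 \leq A$, every admissible trajectory obeys a uniform bound $|x(t)| \leq M := \exp(1+A)$ on $[0,1]$ (the component $x_3$ being moreover bounded by $A$). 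I then fix a minimizing sequence of admissible pairs $(x^n,V^n)$ with $-x_1^n(1) \to \inf$.

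The next step is to extract compactness. The states $x^n$ are bounded by $M$, and along the $n$-th trajectory the right-hand sides $x_2^n$, $V^n x_1^n$ and $V^n$ of \eqref{eq4a}--\eqref{eq4c} are bounded by $M$, $BM$ and $B$ respectively, so the sequence $(x^n)$ is equi-Lipschitz. By Arzel\`a--Ascoli a subsequence (not relabelled) converges uniformly on $[0,1]$ to some $x^*$. Since the $V^n$ are bounded in $L^\infty[0,1]$, a further subsequence converges weakly-$*$ to some $V^* \in L^\infty[0,1]$; as the pointwise constraint $0 \leq V^n \leq B$ cuts out a convex, weakly-$*$ closed set, the limit still satisfies $0 \leq V^* \leq B$ almost everywhere, so $V^*$ is an admissible control.

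The crux is to verify that $(x^*,V^*)$ is itself an admissible trajectory, which amounts to passing to the limit in the integral forms of \eqref{eq4a}--\eqref{eq4c}. The linear relations $x_1^n(t) = \int_0^t x_2^n(s)\,\d s$ and $x_3^n(t) = \int_0^t V^n(s)\,\d s$ pass to the limit by uniform, respectively weak-$*$, convergence. The only delicate term is the bilinear one $V^n x_1^n$: here $x_1^n \to x_1^*$ \emph{strongly} (uniformly) while $V^n \rightharpoonup V^*$ weakly-$*$, and the product of a weakly convergent sequence with a strongly convergent one converges weakly to the product of the limits, whence $V^n x_1^n \rightharpoonup V^* x_1^*$ and $x^*$ solves the second equation of \eqref{eq4a}--\eqref{eq4c} with control $V^*$. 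This bilinear passage to the limit is the one place where the argument would break down for a genuinely nonlinear dependence on the control, and it is precisely the affine structure that saves it. It remains to record that the initial data $x^*(0)=(0,1,0)$ survive uniform convergence, that the terminal constraint passes as $x_3^*(1) = \lim_n x_3^n(1) \leq A$, and that $-x_1^*(1) = \lim_n(-x_1^n(1))$ equals the infimum; hence the admissible pair $(x^*,V^*)$ is optimal.
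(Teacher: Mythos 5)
Your proof is correct, and it takes a genuinely different (more self-contained) route than the paper. The paper settles existence in one line by checking the hypotheses of Filippov's theorem and citing \cite{agrachev-2004a}: the admissible set is nonempty, the control is valued in the fixed compact set $[0,B]$, and the velocity field $\{(x_2,Vx_1,V):\ V\in[0,B]\}$ is convex for every $x$ --- which is precisely the observation you make at the outset about the affine dependence on $V$. What you have done is, in effect, unpack the proof of Filippov's theorem in this particular bilinear setting: the Gronwall bound and the resulting equi-Lipschitz estimate give Arzel\`a--Ascoli compactness of the states; Banach--Alaoglu (together with separability of $L^1$, which makes the weak-$*$ compactness sequential) handles the controls; the pointwise constraint set $\{0\le V\le B \text{ a.e.}\}$ is convex and weak-$*$ closed; and the only delicate limit passage, the bilinear term $V^n x_1^n$, goes through by pairing the weak-$*$ convergent $V^n$ against the uniformly (hence strongly in $L^1$) convergent $x_1^n\chi_{|[0,t]}$. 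That last step is exactly where the convexity hypothesis of Filippov's theorem does its work, and you correctly flag it as the point that would fail for dynamics genuinely nonlinear in the control. The trade-off: the paper's argument is shorter and leans on a general theorem (which it must invoke with care again for the $L^q$ case, where the proof of Proposition~\ref{prop8} is in fact a direct-method argument very close in spirit to yours); your argument is longer but elementary, and has the merit of making visible the precise mechanism --- strong $\times$ weak-$*$ convergence of the product --- that the citation hides.
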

\begin{proof} The set of admissible controls is obviously nonempty, the control is
valued in a fixed compact set, and the field of velocities
\[ \{ (x_2,Vx_1,V),\quad V \in [0,B] \} \subset \R^3 \]
is convex for any $x \in \R^3$; according to Filippov Theorem \cite{agrachev-2004a},
existence holds.\end{proof}

\noi Let $V$ be a maximizing potential for (\ref{eq0}-\ref{eq2}),
and let $x=(y,\dot{y},x_3)$ be the associated trajectory.
According to Pontrjagin maximum principle \cite{agrachev-2004a},
there exists a nontrivial pair $(p^0,p) \neq (0,0)$, $p^0 \leq 0$ a constant and
$p:[0,1] \to (\R^3)^*$ a Lip\-schitz covector function such that, a.e.\ on $[0,1]$,
\begin{equation} \label{eq5}
  \dot{x}=\frp{H}{p}(x,V,p),\quad \dot{p}=-\frp{H}{x}(x,V,p),
\end{equation}
and
\begin{equation} \label{eq6}
  H(x(t),V(t),p(t)) = \max_{v \in [0,B]} H(x(t),v,p(t))
\end{equation}
where $H$ is the Hamiltonian function
\begin{eqnarray*}
  H(x,V,p) &:=& pf(x,V)\\
  &=& p_1x_2 + (p_2x_1+p_3)V.
\end{eqnarray*}
(There $f(x,V)$ denotes the dynamics (\ref{eq4a}-\ref{eq4c}) in compact form.)
Moreover, in addition to the boundary conditions on $x$, the following transversality
conditions hold: $p_1(1)=-p^0$, $p_2(1)=0$, and $p_3(1) \leq 0$ with complementarity
\[ (x_3(1)-A)\,p_3(1) = 0. \]
As is clear from (\ref{eq5}), $p_3$ is constant, $\dot{p}_1=-p_2$ and
\begin{equation} \label{eq7}
  -\ddot{p}_2+Vp_2 = 0.
\end{equation}
The dynamical system (\ref{eq4a}-\ref{eq4c}) is bilinear in $x$ and $V$,
\[ \dot{x} = F_0(x)+VF_1(x),\quad
   F_0(x) = \left[ \begin{array}{c} x_2\\   0\\ 0 \end{array} \right],\quad
   F_1(x) = \left[ \begin{array}{c}   0\\ x_1\\ 1 \end{array} \right], \]
so $H=H_0+VH_1$ with $H_0(x,p)=pF_0(x)$, $H_1(x,p)=pF_1(x)$.
Let $\Phi(t):=H_1(x(t),p(t))$ be the evaluation of $H_1$ along the extremal $(x,V,p)$;
it is a Lip\-schitz function, and
the maximization condition (\ref{eq6}) implies that $V(t)=0$ when $\Phi(t)<0$,
$V(t)=B$ when $\Phi(t)>0$. If $\Phi$ vanishes identically on some interval of nonempty
interior, the control is not directly determined by (\ref{eq6}) and is termed
\emph{singular}. Subarcs of the trajectory corresponding to $V=0$, $V=B$ and singular control,
are labelled $\gamma_0$, $\gamma_+$ and $\gamma_s$, respectively. Differentiating
once, one obtains $\dot{\Phi}(t)=H_{01}(x(t),p(t))$,
where $H_{01}=\{H_0,H_1\}$ is the Poisson
bracket of $H_0$ and $H_1$; $\Phi$ is so $\W^{2,\iy}$ and, differentiating again,
\begin{equation} \label{eq8}
  \ddot{\Phi} = H_{001}+V H_{101}
\end{equation}
with length three brackets $H_{001}=\{H_0,H_{01}\}$, $H_{101}=\{H_1,H_{01}\}$.
Computing,
\[ H_{01}=-p_1x_1+p_2x_2,\quad H_{001}=-2p_1x_2,\quad H_{101}= 2p_2x_1. \]
In particular, using the definition of $H$,
\begin{equation} \label{eq9}
  \ddot{\Phi}-4V\Phi =-2(H+p_3 V),\quad \text{a.a.\ }t \in [0,1]
\end{equation}
where $H$ and $p_3$ are constant along any extremal.

There are two possibilities for
extremals depending on whether $\dot{p}_2(1)=p^0$ is zero or not (so-called abnormal
or normal cases).

\begin{lemma} \label{prop0}
The cost multiplier $p^0$ is negative and one can set $p'_2(1)=-1$.
\end{lemma}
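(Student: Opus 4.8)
The plan is to dispose of the so-called abnormal case $p^0=0$ by showing that it forces the whole covector to degenerate in a way incompatible with the active constraint, and then to normalise. I would argue by contradiction, assuming $p^0=0$.

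Under this assumption the transversality condition $p_1(1)=-p^0$ gives $p_1(1)=0$, while, as already recorded before the statement, $\dot{p}_2(1)=p^0=0$; together with $p_2(1)=0$ this provides vanishing Cauchy data at $t=1$ for the linear second-order equation \eqref{eq7}. The existence-uniqueness theory already invoked in Proposition~\ref{prop10} (Caratheodory, which applies equally well backward from $t=1$) then forces $p_2\equiv 0$ on $[0,1]$. Feeding $p_2\equiv 0$ into the costate equation \eqref{eq5} for $p_1$ makes $p_1$ constant, and combined with $p_1(1)=0$ this yields $p_1\equiv 0$.

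At this stage $p^0=0$, $p_1\equiv 0$ and $p_2\equiv 0$, so the nontriviality of $(p^0,p)$ guarantees $p_3\neq 0$; since the transversality inequality gives $p_3\leq 0$, in fact $p_3<0$. But then the switching function reduces to $\Phi=H_1=p_2x_1+p_3=p_3<0$ identically, so the maximisation condition \eqref{eq6} forces $V\equiv 0$, whence $x_3(1)=\int_0^1 V\,\d t=0$. On the other hand $p_3<0$ together with the complementarity relation $(x_3(1)-A)\,p_3=0$ force $x_3(1)=A>0$, a contradiction. Hence $p^0\neq 0$, and since $p^0\leq 0$ it must be negative; rescaling the multiplier $(p^0,p)$ by the positive constant $-1/p^0$ normalises $p^0=-1$, which by the relation $\dot{p}_2(1)=p^0$ is exactly $\dot{p}_2(1)=-1$.

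The delicate points are the uniqueness step---one must use that \eqref{eq7} is a linear Cauchy problem with merely $L^{\balpha}$ coefficient, so it is uniqueness of the \emph{absolutely continuous} solution, rather than classical uniqueness, that is being invoked---and the correct bookkeeping of the maximum principle data: abnormality annihilates $p_1$ and $p_2$ but not a priori the constant $p_3$, so the contradiction cannot come from triviality of the covector alone and must instead be produced by the state constraint being active. This is precisely what $p_3<0$ and the complementarity condition supply, and it is the crux of ruling out the abnormal case.
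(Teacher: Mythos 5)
Your proposal is correct and follows the paper's argument almost verbatim: assume $p^0=0$, use the transversality data $p_2(1)=\dot p_2(1)=0$ together with the linear equation \eqref{eq7} to conclude $p_2\equiv 0$ (hence $p_1\equiv 0$), invoke nontriviality of $(p^0,p)$ and $p_3\le 0$ to get $p_3<0$, deduce $\Phi\equiv p_3<0$ and hence $V\equiv 0$ from the maximization condition, and then derive a contradiction before normalising by homogeneity. The single point of divergence is the final contradiction: the paper dismisses $V\equiv 0$ on the grounds that the zero control is admissible but plainly not optimal (an appeal to the fact that the maximum principle is being applied at a maximizer, together with the easy observation that some admissible potential beats $V\equiv 0$), whereas you instead note that $V\equiv 0$ forces $x_3(1)=0$, while $p_3<0$ and the complementarity relation $(x_3(1)-A)\,p_3(1)=0$ force $x_3(1)=A>0$. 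Your variant has the mild advantage of staying entirely inside the multiplier system, with no need to exhibit a competitor potential; the paper's variant is insensitive to whether the $L^1$ constraint is formulated with complementarity slackness, so it would survive a formulation of the problem in which that condition were not recorded. Both are sound, and your cautionary remark that uniqueness for \eqref{eq7} is Carath\'eodory uniqueness of absolutely continuous solutions is apt, though in this lemma the potential of the auxiliary problem is essentially bounded ($0\le V\le B$), so even less care is needed than you suggest.
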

\begin{proof} Suppose by contradiction that $p^0=0$. Then $p_2(1)=\dot{p}_2(1)=0$, so
(\ref{eq7}) implies that $p_2$ (and $p_1$) are identically zero. Since $(p^0,p) \neq
(0,0)$, $p_3$ must be negative so $\Phi=p_2x_1+p_3=p_3<0$ and $V$ is identically zero
on $[0,1]$, which is impossible (the zero control is admissible but readily not
optimal). Hence $p^0$ is negative, and one can choose $p^0=-1$ by homogeneity in
$(p^0,p)$.\end{proof}

\begin{lemma} \label{prop2}
The constraint $\int_0^1 V\,\d t \leq A$ is strongly active ($p_3<0$).
\end{lemma}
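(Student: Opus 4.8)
The plan is to argue by contradiction, ruling out $p_3=0$; combined with the transversality inequality $p_3 \le 0$ this gives $p_3<0$, and the complementarity relation $(x_3(1)-A)\,p_3=0$ then forces $x_3(1)=A$, so that the volume constraint is genuinely active with a strictly negative multiplier. So I would assume $p_3=0$ and seek a contradiction with admissibility.

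The whole argument turns on the sign of the switching function $\Phi=p_2x_1+p_3$, which under the assumption $p_3=0$ collapses to $\Phi=p_2x_1$. First I would recall that $x_1=y>0$ on $(0,1]$: this is exactly the strict positivity established inside the proof of Proposition~\ref{prop10} for a non-negative potential $V$ (the argument there shows $y$ cannot vanish before $t=1$, not merely that $y(1)\ge 0$). It then remains to pin down the sign of the adjoint component $p_2$. By Lemma~\ref{prop0} one has $p_2(1)=0$ and $\dot{p}_2(1)=-1$, while $p_2$ satisfies the same scalar equation as the state, namely $-\ddot{p}_2+Vp_2=0$, cf.~\eqref{eq7}. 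Introducing the time-reversed function $q(\tau):=p_2(1-\tau)$, one checks that $q$ solves $-q''+\widetilde V q=0$ with $\widetilde V(\tau):=V(1-\tau)\ge 0$ and initial data $q(0)=0$, $q'(0)=1$. This is precisely the initial value problem to which the positivity argument of Proposition~\ref{prop10} applies, so $q>0$ on $(0,1]$, that is $p_2>0$ on $[0,1)$.

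Combining the two facts, $\Phi=p_2x_1>0$ on $(0,1)$ whenever $p_3=0$. The maximization condition~\eqref{eq6}, which prescribes $V=B$ wherever $\Phi>0$, then forces $V=B$ almost everywhere on $[0,1]$, whence $x_3(1)=\int_0^1 V\,\d t=B>A$. This contradicts the admissibility requirement $x_3(1)\le A$ (recall $B>A$ by construction). Therefore $p_3\ne 0$, and since $p_3\le 0$ we conclude $p_3<0$; complementarity then yields $x_3(1)=A$.

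The only delicate step is the positivity of $p_2$: the hard part will be to apply Proposition~\ref{prop10} in its sharp form (strict positivity on the whole open interval) to the \emph{terminal} value problem satisfied by $p_2$, which I reduce to a standard initial value problem through the reflection $t\mapsto 1-t$, crucially using $V\ge 0$ so that $\widetilde V\ge 0$ as well. Everything else is a direct reading of the transversality and maximization conditions already derived above.
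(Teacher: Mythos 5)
Your proof is correct and follows essentially the same route as the paper's: assume $p_3=0$, deduce $\Phi=p_2x_1>0$ on $(0,1)$ from the positivity of $x_1$ and of $p_2$ (the latter via the reflection $t\mapsto 1-t$, which the paper invokes with the word ``symmetrically''), and conclude $V=B$ a.e., contradicting $B>A$. The only cosmetic difference is that the paper records the quantitative bounds $x_1(t)\geq t$ and $p_2(t)\geq 1-t$ rather than bare strict positivity, but both suffice here.
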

\begin{proof} Assume by contradiction that $p_3=0$.
Since $-x_1''+Vx_1=0$ with $x_1(0)=0$,
$x_1'(0)=1$ and $V \geq 0$, $x_1$ is non-negative on $[0,1]$ (see
Proposition~\ref{prop10}).
Integrating, one has $x_1(t) \geq t$ on $[0,1]$.
Symmetrically, $-p_2''+Vp_2=0$ with $p_2(1)=0$, $p_2'(1)=-1$, so
one gets that $p_2(t) \geq 1-t$ on $[0,1]$. Now, $p_3=0$ implies
$\Phi=p_2x_1 \geq t(1-t)$, so $\Phi>0$ on $(0,1)$: $V=B$ a.e., which
is impossible since $B>A$.\end{proof}
\noi As a result, since $\Phi(0)=\Phi(1)=p_3$, $\Phi$ is negative in the neighbourhood
of $t=0+$ and $t=1-$, so an optimal trajectory starts and terminates with $\gamma_0$ arcs.

\begin{lemma} \label{lemma1}
There is no interior $\gamma_0$ arc.
\end{lemma}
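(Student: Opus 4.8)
The plan is to combine the differential relation \eqref{eq9} for the switching function $\Phi$ on control-free arcs with a sign analysis of the constant $H$. The first step is to show $H>0$. Recall that $H$ is constant along the extremal, so it suffices to evaluate it at $t=1$: the trajectory ends with a $\gamma_0$ arc, so $V(1)=0$, and together with the transversality condition $p_2(1)=0$ this gives $H=p_1(1)x_2(1)$. Using the normalization $p_1(1)=-p^0=1$ from Lemma~\ref{prop0}, and the fact that $\ddot{y}=Vy\geq 0$ makes $x_2=y'$ nondecreasing, one finds $x_2(1)=y'(1)\geq y'(0)=1$, hence $H>0$.

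The second step is to read off the shape of $\Phi$ on a $\gamma_0$ arc. There $V=0$, so \eqref{eq9} reduces to $\ddot{\Phi}=-2H<0$; that is, $\Phi$ is strictly concave on every such arc. Assume, for contradiction, that there is an interior $\gamma_0$ arc on $[t_a,t_b]$ with $0<t_a<t_b<1$. Being interior, it is adjacent on each side to an arc with $V>0$: on a $\gamma_+$ arc the maximization condition \eqref{eq6} gives $\Phi\geq 0$, while on a singular arc $\Phi\equiv 0$. By continuity of the Lipschitz function $\Phi$, the junction values must vanish, so $\Phi(t_a)=\Phi(t_b)=0$.

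The final step is the contradiction. A strictly concave function that vanishes at both endpoints of $[t_a,t_b]$ is strictly positive on the open interval $(t_a,t_b)$, whereas $V=0$ on the arc forces $\Phi\leq 0$ there by \eqref{eq6}. This is impossible, so no interior $\gamma_0$ arc can exist.

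I expect the only delicate point to be establishing $H>0$, which requires assembling the transversality data, the normalization of Lemma~\ref{prop0}, and the monotonicity of $y'$; once that is in place, the concavity argument is immediate. A minor additional care is needed at the arc junctions to confirm $\Phi(t_a)=\Phi(t_b)=0$ whether the neighbouring arcs are bang ($\gamma_+$) or singular.
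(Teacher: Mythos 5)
Your proof is correct and takes essentially the same route as the paper's: both rest on $\ddot{\Phi}=-2H<0$ along a $V=0$ arc (via \eqref{eq9}) combined with the sign $\Phi\leq 0$ forced by the maximization condition \eqref{eq6}, so that $\Phi$, which vanishes at the junction(s) of a putative interior $\gamma_0$ arc, cannot stay nonpositive there. The differences are cosmetic---you use concavity between the two junction values $\Phi(t_a)=\Phi(t_b)=0$, whereas the paper uses the left junction together with $\dot{\Phi}(\bar{t})\leq 0$ to show $\Phi$ can never vanish again before $t=1$---and your explicit verification that $H>0$ (left implicit in the paper) is a sound and welcome addition.
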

\begin{proof} If such an interior arc existed, there would exist $\bar{t} \in (0,1)$
such that $\Phi(\bar{t})=0$ and $\dot{\Phi}(\bar{t}) \leq 0$ ($\dot{\Phi}(\bar{t})>0$
would imply $\Phi>0$ in the neighbourhood of $\bar{t}+$, contradicting $V=0$); but
then $\ddot{\Phi}=-2H<0$ for $t \geq \bar{t}$ would result in $\Phi<0$ for $t >
\bar{t}$, preventing $\Phi$ for vanishing again before $t=1$.\end{proof}

\begin{lemma} \label{lemma2}
If $H+p_3 B<0$, there is no interior $\gamma_+$ arc.
\end{lemma}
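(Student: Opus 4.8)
The plan is to argue by contradiction, exploiting the sign that relation~\eqref{eq9} forces on $\ddot{\Phi}$ along a $\gamma_+$ arc, and to conclude via a one-line convexity argument on the switching function $\Phi$. First I would suppose that an interior $\gamma_+$ arc exists and replace it by a maximal one: let $(t_1,t_2)$ be a maximal subinterval of $[0,1]$ on which $V=B$, equivalently (by the maximization condition~\eqref{eq6}) on which $\Phi>0$. Since Lemma~\ref{prop2} gives $p_3<0$ and $\Phi(0)=\Phi(1)=p_3$, the switching function is negative near both endpoints of $[0,1]$, so necessarily $0<t_1<t_2<1$. Because the arc is maximal and $\Phi$ is continuous (indeed $\W^{2,\iy}$), just to the left of $t_1$ and just to the right of $t_2$ one has $\Phi\le 0$ (a $\gamma_0$ subarc where $\Phi<0$, or a singular subarc where $\Phi=0$), while $\Phi>0$ inside; continuity then forces $\Phi(t_1)=\Phi(t_2)=0$.

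Next I would read off the second derivative of $\Phi$ on the arc from~\eqref{eq9}. On $(t_1,t_2)$ we have $V=B$, so $\ddot{\Phi}=4B\Phi-2(H+p_3B)$ almost everywhere. Under the hypothesis $H+p_3B<0$ the constant term $-2(H+p_3B)$ is strictly positive, and since $B>0$ and $\Phi>0$ on the open arc the term $4B\Phi$ is also positive; hence $\ddot{\Phi}>0$ a.e.\ on $(t_1,t_2)$. As $\Phi$ is $\W^{2,\iy}$, its derivative $\dot{\Phi}$ is absolutely continuous with a.e.-positive derivative, hence strictly increasing, so $\Phi$ is convex on $[t_1,t_2]$.

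Finally I would invoke the elementary fact that a convex function vanishing at both endpoints of an interval is non-positive throughout it: $\Phi(t)\le 0$ for every $t\in[t_1,t_2]$. This contradicts $\Phi>0$ on $(t_1,t_2)$, and the contradiction rules out any interior $\gamma_+$ arc. The only genuinely delicate point, and the step I would be most careful about, is the justification of the boundary values $\Phi(t_1)=\Phi(t_2)=0$: it rests on the maximality of the chosen arc combined with the continuity of $\Phi$ and the a.e.\ characterization $V=B \Leftrightarrow \Phi>0$ coming from~\eqref{eq6}. Everything else reduces to the explicit convexity estimate extracted from~\eqref{eq9}.
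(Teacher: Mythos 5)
Your argument is correct in substance but takes a genuinely different route from the paper's. The paper works only from the \emph{entry} point of the putative interior $\gamma_+$ arc: at $\bar{t}$ one has $\Phi(\bar{t})=0$ and $\dot{\Phi}(\bar{t})\geq 0$, and it then solves the linear equation $\ddot{\Phi}-4B\Phi=-2(H+p_3B)$ in closed form (formula~\eqref{eq11}), noting that when $H+p_3B<0$ the $\ch$ term is strictly positive and the $\sh$ term nonnegative for $t>\bar{t}$, so $\Phi$ stays positive and can never vanish again before $t=1$ --- contradicting the fact that the trajectory must terminate with a $\gamma_0$ arc since $\Phi(1)=p_3<0$. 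You instead take a \emph{maximal} arc, pin down $\Phi(t_1)=\Phi(t_2)=0$ at both ends, and get a purely local contradiction by convexity, with no explicit solution formula and no appeal to the terminal condition; this is arguably more elementary, while the paper's explicit expression~\eqref{eq11} has the side benefit of being reused in the proof of Corollary~\ref{cor1}. One imprecision you should repair: the maximization condition~\eqref{eq6} gives $\Phi>0\Rightarrow V=B$, but $V=B$ on an open interval only yields $\Phi\geq 0$ there (by continuity, since $\Phi=0$ leaves the control undetermined), so on your maximal arc you may not assert $\Phi>0$. This costs nothing: the constant term alone gives $\ddot{\Phi}\geq -2(H+p_3B)>0$ a.e., so $\dot{\Phi}$ is strictly increasing, $\Phi$ is strictly convex, and with zero boundary values one gets $\Phi<0$ on $(t_1,t_2)$, which already contradicts $\Phi\geq 0$ there.
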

\begin{proof} By contradiction again: there would exist $\bar{t} \in (0,1)$
such that $\Phi(\bar{t})=0$ and $\dot{\Phi}(\bar{t}) \geq 0$ ($\dot{\Phi}(\bar{t})<0$
would imply $\Phi<0$ in the neighbourhood of $\bar{t}+$, contradicting $V=B$);
along $\gamma_+$,
\[ \ddot{\Phi}-4B\Phi =-2(H+p_3 B)>0, \]
so
\begin{equation} \label{eq11}
  \Phi = \frac{H+p_3 B}{2B}(1-\ch(2\sqrt{B}(t-\bar{t})))
        + \frac{\dot{\Phi}(\bar{t})}{2\sqrt{B}}\,\sh(2\sqrt{B}(t-\bar{t})),
\end{equation}
and $\Phi>0$ if $t > \bar{t}$, preventing $\Phi$ for vanishing again before
$t=1$.\end{proof}

\noi Along a singular arc, $\Phi \equiv 0$ so (\ref{eq8}) allows to determine the
singular control provided $H_{101} \neq 0$ ("order one" singular).
A necessary condition for optimality
(the Legendre-Clebsch condition) is that $H_{101} \geq 0$ along such an arc; order one
singular arcs such that $H_{101}>0$ are called \emph{hyperbolic} \cite{bonnard-2003a}.

\begin{lemma} Singulars are of order one and hyperbolic; the singular control is
constant and equal to $-H/p_3$.
\end{lemma}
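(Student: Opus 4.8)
The plan is to exploit the algebraic identity that \emph{defines} a singular arc, namely $\Phi \equiv 0$, and to feed it into the two structural relations (\ref{eq8}) and (\ref{eq9}) already derived for $\ddot\Phi$. First I would record that along a singular subarc $\gamma_s$ the switching function vanishes on an interval of nonempty interior, so that $\dot\Phi$ and $\ddot\Phi$ vanish identically there as well. In particular, since $\Phi = H_1 = p_2 x_1 + p_3$, the vanishing of $\Phi$ means the constraint $p_2 x_1 = -p_3$ holds throughout the arc.

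The order-one and hyperbolicity claims then follow at once from the formula $H_{101} = 2p_2 x_1$: substituting the constraint gives $H_{101} = -2p_3$, and by Lemma~\ref{prop2} we have $p_3 < 0$, whence $H_{101} = -2p_3 > 0$. This single substitution simultaneously yields that $H_{101} \neq 0$ (so the arc is of order one and the singular control is genuinely determined by $\ddot\Phi = 0$) and that $H_{101} > 0$ (so the strengthened Legendre--Clebsch condition holds and the arc is hyperbolic in the sense recalled just before the statement).

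To identify the singular control I would use (\ref{eq9}), which holds a.e.\ on $[0,1]$: with $\Phi \equiv 0$ and $\ddot\Phi \equiv 0$ on $\gamma_s$, the left-hand side vanishes, forcing $H + p_3 V = 0$, i.e.\ $V = -H/p_3$. Since both $H$ and $p_3$ are constant along any extremal, this singular control is constant. Equivalently, one may solve $\ddot\Phi = H_{001} + V H_{101} = 0$ from (\ref{eq8}) using $H_{001} = -2p_1 x_2$ and $H_{101} = 2p_2 x_1$, and reconcile the result with $-H/p_3$ by noting that on $\gamma_s$ the Hamiltonian reduces to $H = p_1 x_2$, because $H_1 = p_2 x_1 + p_3 = 0$ there.

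The computation is short, so the only real care required is bookkeeping. One must check that the pointwise manipulations are legitimate, which they are because $\Phi$ is $\W^{2,\iy}$ and (\ref{eq8})--(\ref{eq9}) hold a.e.; and one should observe that the denominator $p_3$ is nonzero precisely thanks to the strong activation of the norm constraint established in Lemma~\ref{prop2}, which is what makes $-H/p_3$ well defined. I expect no genuine obstacle beyond recognizing that \emph{evaluating $H_{101}$ on the constraint surface $H_1 = 0$} is what delivers order one, hyperbolicity, and the constancy of the control all at once.
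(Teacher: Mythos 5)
Your proof is correct and is essentially identical to the paper's argument: both use $\Phi\equiv 0$ to get $p_2x_1=-p_3$, hence $H_{101}=-2p_3>0$ (order one and hyperbolic, via Lemma~\ref{prop2}), and both then read off $H+p_3V=0$ from~(\ref{eq9}) with $\Phi\equiv\ddot\Phi\equiv 0$, giving the constant singular control $V=-H/p_3$.
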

\begin{proof} $\Phi=p_2x_1+p_3 \equiv 0$ implies $H_{101}=2p_2x_1=-2p_3>0$, so any
singular is of order one and hyperbolic; (\ref{eq9}) then tells $H+p_3 V=0$, hence the
expression of the singular control.\end{proof}

\begin{prop} A maximizing potential is piecewise constant.
\end{prop}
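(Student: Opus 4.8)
The plan is to prove the stronger fact that an optimal potential is a single positive pulse, $V=c\,\chi_{(t_1,t_2)}$ with $c>0$ constant and $0<t_1<t_2<1$, which is in particular piecewise constant. Since $V$ equals $0$ on every $\gamma_0$ arc, equals $B$ on every $\gamma_+$ arc, and equals the constant $-H/p_3$ on every singular arc (by the preceding lemma), the potential is automatically constant on each individual arc; the whole content of the statement is therefore to bound the number of arcs, and I would do this by showing that there is exactly one nontrivial arc in the middle.

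First I would collect the structural information already obtained. From Lemma~\ref{prop2} one has $p_3<0$, and since $\Phi(0)=\Phi(1)=p_3<0$ the trajectory begins and ends with a $\gamma_0$ arc; by Lemma~\ref{lemma1} it has no interior $\gamma_0$ arc. Consequently the open set $\{\Phi<0\}$, on which $V=0$, is exactly $[0,t_1)\cup(t_2,1]$ for some $0<t_1\le t_2<1$, and on the middle interval $[t_1,t_2]$ one has $\Phi\ge0$. Moreover $t_1<t_2$, for otherwise $V=0$ almost everywhere, which is not optimal.

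The decisive point is that the sign of the constant $\sigma:=H+p_3B$ decides which nontrivial arc type is admissible, and that the two cannot alternate. I would first record that $H>0$: indeed $H$ is constant along the extremal, and evaluating it on the terminal $\gamma_0$ arc (where $V=0$ and $p_1(1)=-p^0=1$ after the normalization of Lemma~\ref{prop0}) gives $H=p_1(1)x_2(1)=\dot{y}(1)\ge1$, because $\ddot{y}=Vy\ge0$ forces $\dot{y}$ to be nondecreasing from $\dot{y}(0)=1$. Since $p_3<0$, the singular value $-H/p_3$ is then positive and lies in $[0,B]$ precisely when $\sigma\le0$. Now I distinguish three cases on $[t_1,t_2]$. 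If $\sigma>0$, no singular control is admissible, so $\{\Phi=0\}$ cannot have positive measure there (on such a set (\ref{eq9}) would force $V=-H/p_3>B$); hence $\Phi>0$ almost everywhere on $(t_1,t_2)$ and the middle is a single $\gamma_+$ arc. If $\sigma<0$, Lemma~\ref{lemma2} forbids interior $\gamma_+$ arcs, and as the trajectory terminates with $\gamma_0$ every $\gamma_+$ arc would be interior; hence there is no $\gamma_+$ arc, so $\Phi$ cannot be positive on a set of positive measure, and with $\Phi\ge0$ this forces $\Phi\equiv0$ on $[t_1,t_2]$, a single singular arc. If $\sigma=0$, the singular value equals $B$, so $V=B$ on the middle interval either way. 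In all three cases $V$ is a positive constant on $(t_1,t_2)$ and zero outside, which proves the claim.

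The main obstacle is precisely the a priori possibility of a long or even infinite concatenation of alternating $\gamma_+$ and singular subarcs inside $[t_1,t_2]$, producing arbitrarily many switchings. The mechanism that rules this out is the $\sigma$-dichotomy above: admissibility of the singular control requires $\sigma\le0$, while the presence of an interior $\gamma_+$ arc requires $\sigma\ge0$ (through Lemma~\ref{lemma2}), so the two arc types can coexist only in the degenerate case $\sigma=0$, where they carry the same control value $B$. The only routine verifications left are that $\{\Phi=0\}$ has measure zero when $\sigma\neq0$ (read off from (\ref{eq9})) and that the zero potential is suboptimal (so that $t_1<t_2$); neither presents a genuine difficulty.
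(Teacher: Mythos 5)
Your proof is correct, and its engine is the same as the paper's: the sign dichotomy for $\sigma=H+p_3B$ (admissibility of the singular control forces $\sigma\leq 0$, while Lemma~\ref{lemma2} forbids interior $\gamma_+$ arcs when $\sigma<0$), combined with Lemma~\ref{lemma1} and strong activation from Lemma~\ref{prop2}. The execution, however, is genuinely different. The paper argues by counting switchings: infinitely many discontinuities would require infinitely many $\gamma_+\gamma_s$ alternations, which need $\sigma<0$ and are then killed by Lemma~\ref{lemma2}; the precise $\gamma_0\gamma_+\gamma_0$ or $\gamma_0\gamma_s\gamma_0$ structure is deferred to Corollary~\ref{cor1}, proved there by a contact-order (regular switch versus fold) analysis. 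You instead partition $[0,1]$ by the sign of $\Phi$, use Lemma~\ref{lemma1} to pin $\{\Phi<0\}$ down to $[0,t_1)\cup(t_2,1]$, and settle the middle interval in one stroke via the level-set argument: on a positive-measure subset of $\{\Phi=0\}$ one has $\dot{\Phi}=\ddot{\Phi}=0$ a.e., so (\ref{eq9}) forces $V=-H/p_3$ there, which is inadmissible when $\sigma>0$ and pins down the singular value when $\sigma\leq 0$. This buys two things: you obtain the single-pulse conclusion of Corollary~\ref{cor1} simultaneously with the proposition, and you avoid the paper's implicit assumption that the extremal decomposes into a countable concatenation of arcs, since your argument handles a priori pathological coincidence sets $\{\Phi=0\}$ of positive measure directly. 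Your explicit bound $H\geq 1$ (from $H=p_1(1)\dot{y}(1)$ on the terminal $\gamma_0$ arc) also makes rigorous the positivity of $H$ that the paper uses silently in the proof of Lemma~\ref{lemma1}. The one step you should state rather than wave at is the standard fact that a $W^{1,1}$ function has vanishing derivative a.e.\ on each of its level sets, applied twice to $\Phi\in W^{2,\infty}$; with that quoted, your ``routine verifications'' are genuinely routine and the proof is complete.
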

\begin{proof} According to the maximum principle, the trajectory associated with an
optimal potential is the concatenation of (possibly infinitely many) $\gamma_0$,
$\gamma_+$ and $\gamma_s$ subarcs. By Lemma~\ref{lemma1}, there are exactly two
$\gamma_0$ arcs; if $V$ has an infinite number of discontinuities, it is necessarily
due to the presence of infinitely many switchings between $\gamma_+$ and $\gamma_s$
subarcs with $H+p_3 B<0$ (this quantity must be nonpositive to ensure admissibility of
the singular potential, $V=-H/p_3 \leq B$, and even negative since otherwise
$\gamma_+$ and $\gamma_s$ would be identical, generating no discontinuity at all). By
Lemma~\ref{lemma2}, there is no $\gamma_+$ subarc if $H+p_3 B<0$. There are thus only
finitely many switchings of the potential between the constant values $0$, $B$ and
$-H/p_3$. 
\end{proof}

\begin{cor} \label{cor1}
An optimal trajectory is either of the form $\gamma_0\gamma_+\gamma_0$,
or $\gamma_0\gamma_s\gamma_0$.
\end{cor}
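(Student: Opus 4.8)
The plan is to combine the structural results already established with a simple dichotomy on the sign of the constant $H+p_3 B$. First I would record that, by the sentence following Lemma~\ref{prop2}, one has $\Phi(0)=\Phi(1)=p_3<0$, so every optimal trajectory begins and terminates with a $\gamma_0$ arc. By Lemma~\ref{lemma1} there is no \emph{interior} $\gamma_0$ arc; hence there are exactly two $\gamma_0$ subarcs, located at the two endpoints, and the interior of the trajectory is a concatenation (finite, by the previous proposition) of $\gamma_+$ and $\gamma_s$ subarcs only.

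Next I would split according to whether $H+p_3 B$ is negative or non-negative. If $H+p_3 B<0$, then Lemma~\ref{lemma2} forbids any interior $\gamma_+$ arc, so the interior consists exclusively of singular subarcs. Since the singular control is constant and equal to $-H/p_3$, any two adjacent singular subarcs carry the same potential and therefore merge into a single $\gamma_s$ arc; this yields the form $\gamma_0\gamma_s\gamma_0$.

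In the complementary case $H+p_3 B\geq 0$ I would argue that no genuine singular arc can occur. Admissibility of the singular control requires $-H/p_3\leq B$, that is $H+p_3 B\leq 0$ (recall $p_3<0$, so multiplication by $p_3$ reverses the inequality); combined with $H+p_3 B\geq 0$ this forces $H+p_3 B=0$, in which case $-H/p_3=B$ and the singular arc is indistinguishable from a $\gamma_+$ arc. Thus the interior contains $\gamma_+$ subarcs only, and since they all carry the constant potential $V=B$ they again merge into a single arc, giving $\gamma_0\gamma_+\gamma_0$.

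Essentially all of the difficulty has been absorbed by the preceding lemmas (in particular the exclusion of interior $\gamma_0$ arcs and the sign analysis of $\ddot{\Phi}$ in Lemmas~\ref{lemma1} and~\ref{lemma2}), so the only genuinely new point, and the step I would be most careful about, is the merging argument: one must make sure that between two consecutive subarcs of the same type there is no hidden arc of a different type. This is precisely guaranteed in each case by the fact that the only admissible interior behaviour is of a single type, together with the constancy of the potential on both $\gamma_+$ and $\gamma_s$; these two facts make the interior concatenation collapse to a single middle arc and complete the proof.
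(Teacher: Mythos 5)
Your argument is essentially correct, but it is organized quite differently from the paper's own proof. The paper works locally at the first zero $\bar{t}$ of the switching function $\Phi$: it splits according to the order of contact ($\dot{\Phi}(\bar{t})\neq 0$, the regular-switch case, versus $\dot{\Phi}(\bar{t})=0$, where a further analysis of $\ddot{\Phi}(\bar{t}+)$ rules out the ``fold'' case), and in each branch it uses the closed-form expression (\ref{eq11}) for $\Phi$ along a $\gamma_+$ arc to see how $\Phi$ returns to zero and what can follow. You instead argue globally, with a dichotomy on the sign of the constant $H+p_3B$: when $H+p_3B<0$, Lemma~\ref{lemma2} removes interior $\gamma_+$ arcs and the middle collapses to a single singular arc; when $H+p_3B\geq 0$, the admissibility bound $-H/p_3\leq B$ for the singular control (equivalently $H+p_3B\leq 0$, since $p_3<0$ by Lemma~\ref{prop2}) excludes genuine singular arcs, except in the borderline case $H+p_3B=0$ where the singular control saturates at $B$ and the corresponding arc is indistinguishable from a $\gamma_+$ arc. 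Both proofs rest on the same pillars---$\Phi(0)=\Phi(1)=p_3<0$, Lemma~\ref{lemma1}, Lemma~\ref{lemma2}, the constancy $V=-H/p_3$ of the singular control, and the finiteness of the concatenation from the preceding piecewise-constancy proposition---but your version avoids the contact-order/fold analysis and the explicit hyperbolic formula altogether, which makes it shorter; the paper's local analysis has the merit of identifying, at the switching time itself, which of the two structures occurs and why a fold contact is impossible.

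One point you should patch: your step ``hence there are exactly two $\gamma_0$ subarcs'' silently assumes that the middle is nonempty, that is, that the optimal trajectory is not one single $\gamma_0$ arc (equivalently $V\equiv 0$, for which neither of the two asserted forms holds, and which Lemma~\ref{lemma1} does not exclude since such an arc is not interior). The paper disposes of this at the very start of its proof by noting that $\Phi$ must vanish somewhere in $(0,1)$, since otherwise $V\equiv 0$, which is admissible but not optimal (as already observed in the proof of Lemma~\ref{prop0}). Adding this one line makes your proof complete.
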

\begin{proof} There is necessarily some minimum $\bar{t} \in (0,1)$ such that
$\Phi(\bar{t})=0$ (otherwise $V \equiv 0$ which would contradict the existence of
solution). There are two cases, depending on the order of the contact of the extremal
with $H_1=0$.\\

\noi (i) $\dot{\Phi}(\bar{t}) \neq 0$ ("regular switch" case). Having started with
$\Phi(0)=p_3<0$, necessarily $\dot{\Phi}(\bar{t})>0$ and there is a switch from $V=0$
to $V=B$ at $\bar{t}$. According to Lemma~\ref{lemma2}, this is only possible if
$H+p_3 B \geq 0$. Along $\gamma+$, $\Phi$ is given by (\ref{eq11}) and vanishes again
at some $\tilde{t} \in (\bar{t},1)$ as the trajectory terminates with a $\gamma_0$
subarc. Since $H+p_3 B \geq 0$ and $\dot{\Phi}(\bar{t})<0$, $\dot{\Phi}(\tilde{t})$ is
necessarily negative, so $\Phi<0$ for $t > \tilde{t}$ and the structure is
$\gamma_0\gamma_+\gamma_0$.\\

\noi (ii) $\dot{\Phi}(\bar{t})=0$. The potential being piecewise continuous,
$\ddot{\Phi}$ has left and right limits at $\bar{t}$ (see (\ref{eq8})). Having started
with $V=0$, $\ddot{\Phi}(\bar{t}-)=-2H$ is negative; assume the contact is of order
two, that is $\ddot{\Phi}(\bar{t}+) \neq 0$ ("fold" case, \cite{bonnard-2003a}).
Clearly, $\ddot{\Phi}(\bar{t}+)<0$ is impossible as it would imply $\Phi<0$ for $t >
\bar{t}$ (and hence $V \equiv 0$ on $[0,1]$). So $\ddot{\Phi}(\bar{t}+)>0$
("hyperbolic fold", \emph{ibid}), and there
is a switching from $V=0$ to $V=B$. Along $\gamma_+$,
\[ \Phi = \frac{H+p_3 B}{2B}(1-\ch(2\sqrt{B}(t-\bar{t}))) \]
by (\ref{eq11}) again, and $H+p_3 B$ must be negative for $\ddot{\Phi}(\bar{t}+)>0$ to
hold. Then $\Phi>0$ for $t > \bar{t}$, which contradicts the termination by a
$\gamma_0$ subarc. So $\ddot{\Phi}(\bar{t}+)=0$, that is $V$ jumps from $0$ to singular,
$V=-H/p_3$, at $\bar{t}$. As it
must be admissible, $H+p_3 B \leq 0$. If $H+p_3 B=0$, the singular control is
saturating, $V=B$, and $\gamma_+$ and $\gamma_s$ arcs are identical; there cannot be
interior $\gamma_0$ (Lemma~\ref{lemma1}), so the structure is
$\gamma_0\gamma_s\gamma_0$ ($=\gamma_0\gamma_+\gamma_0$, in this case).
Otherwise, $H+p_3 B<0$ and
Lemma~\ref{lemma2} asserts that $\gamma_s\gamma_+$ connections are not possible:
The structure is also $\gamma_0\gamma_s\gamma_0$.\end{proof}

\begin{proof}[Proof of Theorem~\ref{thm1}] The proof is done in three steps: first,
existence and uniqueness are obtained for each auxiliary problem (\ref{eq1}-\ref{eq2})
with bound $V \leq B$ on the potential, $B$ large enough;
then existence for the original problem
(\ref{eq0}-\ref{eq2}) with unbounded control is proved. Finally, uniqueness is obtained.\\

\noi (i) As a result of Corollary~\ref{cor1}, each auxiliary problem can be reduced
to the following finite dimensional question: given $B>A>0$, maximize
$y(1)=y(s,\ell)$ w.r.t.\ $s \in [0,1]$, $\ell \leq 2\min\{s,1-s\}$ and $\ell
\geq A/B$, where $y(s,\ell)$
is the value at $t=1$ of the solution of (\ref{eq1}) generated by the
potential equal to the characteristic function of the interval $[s-\ell/2,s+\ell/2]$
times $A/\ell$ (the constraint (\ref{eq1a}) is active by virtue
of Lemma~\ref{prop2}). Computing,
\[ y(s,\ell) = (1-\ell)\,\ch\,\sqrt{A\ell}
   +A[(s-s^2)+(1/A-1/2)\ell+\ell^2/4]\,\shc\,\sqrt{A\ell} \]
where $\shc(z):=(1/z)\,\sh\,z$.
The function to be maximized is continuous
on the compact triangle defining the constraints on $(s,\ell)$,
so one retrieves existence. As we know that an optimal arc must start and end with
$\gamma_0$ arcs, the solution cannot belong to the part of the boundary corresponding
to $\ell=2\min\{s,1-s\}$, extremities included; as a result, $\partial y/\partial s$
must be zero at a solution, so $s=1/2$ (potential symmetric w.r.t.\ $t=1/2$). Since
one has $y(s=1/2,\ell)=(1+A/4)+A^2\ell/24+o(\ell)$, too small a $\ell$ cannot
maximize the function; so, for $B$ large enough, the point on the boundary
$\ell=A/B$ cannot be solution, and one also has that
\[ \frp{y}{\ell}(s,\ell)|_{s=1/2} =
   \frac{(A(\ell-1)^2-4\ell)(\ch\,\sqrt{A\ell}-\shc\,\sqrt{A\ell})}{8\ell} \]
must vanish. As $\ell<1$, one gets
$\ell=\ell(A):=A/(1+\sqrt{1+A})^2$, hence the expected value for the maximum determinant.\\

\noi (ii) The mapping $V \mapsto -y(1)$ is continuous on $L^1[0,1]$ by
Proposition~\ref{prop10}.
Let $C:=\{V \in L^1[0,1]\,|\,\|V\|_1 \leq A,\ V \geq 0 \text{ a.a.}\}$; it is closed
and nonempty.
For $k \in \NM$, $k>A$ ($A>0$ fixed),
consider the sequence
of auxiliary problems with essential bounds $V \leq k$.
For $k$ large enough, the solution does not depend on $k$ according to point (i),
hence the stationarity of the sequence of solutions to the auxiliary problems.
The sequence of subsets $C \cap \{V \in L^\iy[0,1]\,|\,\|V\|_\iy \leq B_k\}$
is increasing and dense in $C$, so the lemma below ensures existence for the
original problem.

\begin{lemma} \label{lemma4}
Let $f:E \to \R$ be continuous on a normed space $E$, and consider
\[ f(x) \to \min,\quad x \in C \text{ nonempty closed subset of }E. \]
Assume that there exists an increasing sequence $(C_k)_k$
of subsets of $C$ such that (i) $\cup_k C_k$ is dense in $C$,
(ii) for each $k \in \NM$, there is a minimizer $x_k$ of $f$ in $C_k$.
Then, if $(x_k)_k$ is stationary, $\lim_{k \to \iy} x_k$ is a minimizer of $f$ on $E$.
\end{lemma}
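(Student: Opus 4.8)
The plan is to prove Lemma~\ref{lemma4} directly from the hypotheses, exploiting the stationarity of the sequence $(x_k)_k$ to identify the proposed limit and then using density together with continuity to show it is a global minimizer. Since $(x_k)_k$ is stationary, there is an index $k_0$ and a point $x_* \in C$ with $x_k = x_*$ for all $k \geq k_0$; this $x_*$ is the candidate minimizer, and it lies in $\cup_k C_k \subseteq C$.

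The core argument has two steps. First I would observe that $x_*$ is optimal for $f$ over the union $\cup_k C_k$: given any $y \in \cup_k C_k$, there is some index $m$ with $y \in C_m$, and since the sequence $(C_k)_k$ is increasing we have $y \in C_k$ for every $k \geq m$; choosing $k \geq \max\{k_0,m\}$ and using that $x_k = x_*$ minimizes $f$ over $C_k$ gives $f(x_*) \leq f(y)$. Hence $f(x_*) = \inf_{\cup_k C_k} f$. Second, I would pass from the dense subset to all of $C$: for an arbitrary $x \in C$, density of $\cup_k C_k$ in $C$ provides a sequence $(y_n)_n$ in $\cup_k C_k$ converging to $x$ in the norm of $E$, so continuity of $f$ yields $f(x) = \lim_n f(y_n) \geq f(x_*)$, the inequality coming from the first step applied to each $y_n$. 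Therefore $f(x_*) \leq f(x)$ for all $x \in C$, which is exactly the claim that $x_* = \lim_{k \to \iy} x_k$ minimizes $f$ on $C$.

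I do not anticipate a genuine obstacle here: the statement is a soft topological argument and the only care needed is to use the monotonicity of $(C_k)_k$ correctly so that a fixed point of the stationary sequence can be compared against elements entering at arbitrarily late indices. One subtlety worth flagging is that the lemma as stated writes ``minimizer of $f$ on $E$'' whereas the constraint set is $C$; the conclusion should be read as minimization over $C$, which is what the proof delivers and what the application in point (ii) requires. No completeness or compactness of $E$ is needed, and no growth or coercivity condition on $f$ is invoked, precisely because stationarity hands us the limit point for free rather than forcing us to extract it from a minimizing sequence.
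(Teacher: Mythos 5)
Your proof is correct and follows essentially the same route as the paper: stationarity fixes the limit point $x_*$, monotonicity of $(C_k)_k$ lets you compare it against any element of $\cup_k C_k$ entering at a late index, and density plus continuity of $f$ extend the inequality to all of $C$. The paper merely packages this as a contradiction argument with an explicit $\varepsilon$-$\delta$ choice instead of your direct two-step version, and your remark that the conclusion should be read as ``minimizer on $C$'' rather than on $E$ is consistent with what the paper's proof actually establishes ($f(\bar{x})=\inf_C f$).
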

\begin{proof} Let $\bar{x}$ be the limit of the stationary sequence $(x_k)_k$; assume,
by contradiction, that $\bar{f}:=f(\bar{x}) > \inf_C f$ (including the case $\inf_C
f=-\iy$). Then there is $y \in C$
such that $f(y)<\bar{f}$. By density, there is $k \in \NM$ and $y_k \in C_k$
such that $\|y-y_k\| \leq \veps$ where, by continuity of $f$ at $y$,
$\veps>0$ is choosen such
that $\|x-y\| \leq \veps \implies |f(x)-f(y)| \leq d/2$ ($d:=\bar{f}-f(y)>0$);
then $f(y_k) \leq f(y)+d/2 < \bar{f}$, which is contradictory as $k$ can be taken
large enough in order that $x_k=\bar{x}$. So $f(\bar{x})=\inf_C f$, whence existence.
\end{proof}

\noindent \emph{(End of proof of Theorem~\ref{thm1}.)}
(iii) the Pontrjagin maximum principle can be applied to (\ref{eq0}-\ref{eq1a})
with an optimal potential $V$ in $L^1[0,1]$ since the function $f(x,V)$
defining the dynamics
(\ref{eq4a}-\ref{eq4c}) has a partial derivative uniformly (w.r.t.\ $x$) dominated by
an integrable function (see \cite[\S~4.2.C, Remark~5]{cesari-1983a}):
\[ \left|\frp{f}{x}(x,V(t))\right| \leq 1+|V(t)|,\quad \text{a.a.\ }t \in [0,1]. \]
The Hamiltonian is unchanged, but the constraint on the potential is now just $V \geq 0$;
as a consequence, $\Phi=p_2x_1+p_3$ must be nonpositive because of the maximization
condition. Normality is proved as in Lemma~\ref{prop0} and $p_2(1)$ can be set to $-1$.
Regarding strong activation of the $L^1$ constraint ($p_3<0$) the argument at the
begining of the proof of Lemma~\ref{prop2} immediately implies that $p_3$ must be
negative as $\Phi$ cannot be positive, now. So $\Phi(0)=\Phi(1)=p_3<0$ and $V$ is
equal to $0$ in the neighbourhood of $t=0+$ and $t=1-$. Because existence holds,
$\Phi$ must first vanish at some $\bar{t} \in (0,1)$ (otherwise $V \equiv 0$ which is
obviously not optimal); necessarily, $\dot{\Phi}(\bar{t})=0$. One verifies as before
that there cannot be interior $\gamma_0$ subarcs, which prevents accumulation of
$\gamma_0\gamma_s$ or $\gamma_s\gamma_0$ switchings. In particular, $V$ must be piecewise
constant and the right limit $\ddot{\Phi}(\bar{t}+)$ exists. The same kind of reasoning
as in the proof of Corollary~\ref{cor1} rules out the fold case $\ddot{\Phi}(\bar{t}+)$
nonzero; so $\ddot{\Phi}(\bar{t}+)=0$ and $V$ switches from $0$ to singular at
$\bar{t}$. The impossibility of interior $\gamma_0$ subarcs implies a
$\gamma_0\gamma_s\gamma_0$ structure, which is the solution obtained before.
\end{proof}

\begin{remark}
This proves in particular that the generalized
"impulsive" potential equal to $A$ times the
Dirac mass at $t=1/2$ is not optimal, whatever $A>0$,
the combination $\gamma_0\gamma_s\gamma_0$ giving a better cost.
More precisely, the optimal value of the determinant is $1+A/4+A^3/192+o(A^3)$,
so it is asymptotic when $A$ tends to zero to the value obtained for the Dirac mass
(equal to $1+A/4$, as is clear from (i) in the proof before).
\end{remark}


\section{Maximization of the determinant over $L^q$ potentials, $q>1$ \label{s4}}
\noi We begin the section by proving that maximizing the determinant over $L^1$ potentials
is estimated by maximizing the determinant over $L^\balpha$, letting $\balpha$ tend to
one. To this end, we first establish the following existence result. (Note that the
proof is completely different from the existence proof in Theorem~\ref{thm1}.) 
Let $A>0$, and consider the following family of problems for $\balpha$ in $[1,\iy]$:
\begin{equation} \label{eq30}
  \DD_V = \frac{1}{2}\det(-\Delta+V) \to \max,\quad \|V\|_\balpha \leq A.
\end{equation}

\begin{prop} \label{prop8}
Existence holds for $\balpha \in [1,\iy]$.
\end{prop}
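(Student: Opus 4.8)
The plan is to run the direct method of the calculus of variations on the functional $V \mapsto \DD_V = y(1)$, obtaining compactness of the \emph{states} uniformly in $\balpha$ but handling the compactness of the \emph{potentials} according to whether $\balpha$ lies in the reflexive range $(1,\iy)$, equals $\iy$, or equals $1$. By Proposition~\ref{positivitythm} we may restrict a priori to nonnegative potentials without changing the value of the supremum, and by Proposition~\ref{thmupperbound} the map $V \mapsto \DD_V$ is bounded above on the ball $\{\|V\|_\balpha \leq A\}$; hence $M:=\sup_{\|V\|_\balpha\le A}\DD_V<\iy$ and a maximizing sequence $(V_n)$ with $\|V_n\|_\balpha \leq A$ and $\DD_{V_n}\to M$ exists. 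Since the interval has unit length, H\"older gives $\|V_n\|_1 \leq \|V_n\|_\balpha \leq A$, so the Gronwall bound of Proposition~\ref{prop10} (cf.\ \eqref{eqb3}) yields $|x_n(t)|\le e^{1+A}$ on all of $[0,1]$ for the associated states $x_n=(y_n,\dot y_n)$. In particular $\|\dot y_n\|_\iy\le e^{1+A}$, so the $y_n$ are uniformly Lipschitz, and by Arzel\`a--Ascoli a subsequence converges in $C^0[0,1]$ to some $\bar y$.

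The crux is to identify $\bar y$ as the state generated by a limiting admissible potential, i.e.\ to pass to the limit in the fixed-point identity
\[
  y_n(t) = t + \int_0^t (t-s)\,V_n(s)\,y_n(s)\,\d s.
\]
For $\balpha \in (1,\iy)$ the ball of $L^\balpha$ is weakly compact (reflexivity, Banach--Alaoglu) and weakly closed, so after a further extraction $V_n \rightharpoonup \bar V$ with $\|\bar V\|_\balpha \leq A$ by weak lower semicontinuity of the norm; for $\balpha=\iy$ one uses instead weak-$*$ compactness of the ball of $L^\iy=(L^1)^*$. The passage to the limit rests on splitting $V_n y_n = V_n(y_n-\bar y)+V_n\bar y$: the first contribution is controlled in sup-norm by $\|V_n\|_1\,\|y_n-\bar y\|_\iy \le A\,\|y_n-\bar y\|_\iy \to 0$, while the second converges for each fixed $t$ because $s\mapsto \chi_{[0,t]}(s)(t-s)\bar y(s)$ is a fixed element of $L^{\balpha'}$ ($\balpha'$ the conjugate exponent; it is bounded, being continuous with compact support) against which $V_n$ converges weakly (resp.\ weakly-$*$). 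Hence $\bar y(t)=t+\int_0^t(t-s)\bar V(s)\bar y(s)\,\d s$, so by uniqueness in Proposition~\ref{prop10} the limit $\bar y$ is the state of $\bar V$, and $\DD_{\bar V}=\bar y(1)=\lim \DD_{V_n}=M$, proving that $\bar V$ is optimal.

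The main obstacle is the endpoint $\balpha=1$, where the ball of $L^1$ fails to be weakly compact and a maximizing sequence may concentrate, so the argument above breaks down at the weak-limit step. Here existence has already been secured constructively by Theorem~\ref{thm1}, whose explicit optimal pulse lies in $L^1$ (indeed in every $L^p$); thus the proposition holds at $\balpha=1$ as well, and combined with the reflexive and weak-$*$ cases it covers the full range $\balpha\in[1,\iy]$. If a self-contained argument at $\balpha=1$ is preferred, one may regard the nonnegative $V_n$ as a bounded sequence in the space of measures, extract a weak-$*$ limit $\mu$ of mass $\le A$, pass to the limit in the integral equation exactly as above (now testing the continuous function $(t-s)\bar y(s)$ against $V_n\,\d s \rightharpoonup^{*}\mu$) to obtain a state solving $-\bar y''+\bar y\,\d\mu=0$, and finally invoke Theorem~\ref{thm1} to conclude that the optimal $\mu$ is absolutely continuous and hence genuinely an $L^1$ potential.
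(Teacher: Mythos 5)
Your proof is correct and follows essentially the same route as the paper: the direct method with a maximizing sequence, weak (resp.\ weak-$*$) sequential compactness of the ball of $L^q$, Ascoli-type compactness of the states, and a weak--strong limit passage to identify the limit state as the one generated by the limit potential, with the case $q=1$ delegated to Theorem~\ref{thm1}. The only cosmetic differences are that you pass to the limit in the Volterra integral equation for $y$ alone (via the splitting $V_n y_n = V_n(y_n-\bar y)+V_n\bar y$), whereas the paper works with the first-order system and H\"older equicontinuity of the full state $(y,\dot y)$, and that the paper dismisses $q=\iy$ as obvious rather than rerunning the weak-$*$ argument.
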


\begin{proof} The case $\balpha=\iy$ is obvious, while Theorem~\ref{thm1} deals with 
$\balpha=1$. Let then $\balpha$ belong to $(1,\iy)$. By using Gronwall lemma as in
(\ref{eqb3}), it is clear that $\DD$ is bounded on the closed ball of radius $A$ of
$L^\balpha[0,1]$. So the value of the problem is finite. Let $(V_k)_k$ be a maximizing
sequence. As $\|V_k\|_\balpha \leq A$ for any $k$, up to taking a subsequence one can
assume the sequence to be weakly-$*$ converging in $L^\balpha \simeq (L^\bbeta)^*$
($1/\balpha+1/\bbeta=1$) towards some $\bar{V}$. Clearly, $\|\bar{V}\|_\balpha \leq A$.
Let $x_k$ be associated with $V_k$ according to (\ref{eqb1}). The sequence $(x_k)_k$
so defined is bounded and equicontinuous as, for any $t \leq s$ in $[0,1]$,
\begin{eqnarray*}
  |x_k(t)-x_k(s)| & \leq & \int_t^s |C(V_k(\tau))|\cdot|x_k(\tau)|\,\d\tau \\
  & \leq & \|x_k\|_\infty \|1+|V_k|\|_\balpha |t-s|^{1/\bbeta}\\ 
  & \leq & e^{1+A} (1+A) |t-s|^{1/\bbeta}\\ 
\end{eqnarray*}
by H\"older inequality. Using Ascoli's Theorem (and taking a subsequence), $(x_k)_k$
converges uniformly towards some $\bar{x}$. As $\bar{x}(1)=\lim_k x_k(1)$ is equal to
the value of the problem, it suffices to check that $\bar{x}'=C(\bar{V})\bar{x}$ to
conclude. Being a bounded sequence, $(C(V_k))_k$ is equicontinuous in $L^\balpha \simeq
(L^\bbeta)^*$, and $(x_k \cdot \chi_{|[0,t]})_k$ converges towards
$\bar{x} \cdot \chi_{|[0,t]}$
in $L^\bbeta$ for any $t$ in $[0,1]$ ($\chi_{|[0,t]}$ denoting the characteristic
function of the interval $[0,t]$). So
\begin{eqnarray*}
  \bar{x}(t) &=& \bar{x}(0) + \lim_k \int_0^t C(V_k(\tau))x_k(\tau)\,\d\tau\\
             &=& \bar{x}(0) + \int_0^t C(\bar{V}(\tau))\bar{x}(\tau)\,\d\tau,
		    \quad t \in [0,1],
\end{eqnarray*}
which concludes the proof.\end{proof}

\begin{remark} The same proof also gives existence for the minimization problem in
$L^\balpha[0,1]$, $\balpha \in (1,\iy)$.
\end{remark}

\begin{prop} For a fixed $A>0$, the value function 
\[ v_\balpha := \sup_{\|V\|_\balpha \leq A} \DD_V,\quad \balpha \in [1,\iy], \]
is decreasing and $v_\balpha$ tends to $v_1$ when $\balpha \to 1+$.
\end{prop}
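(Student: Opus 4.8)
The plan is to establish the two assertions separately. For monotonicity, the key observation is the \emph{nesting of the feasible sets}: on the bounded interval $[0,1]$ of total measure one, the constraint $\|V\|_\balpha \leq A$ becomes more restrictive as $\balpha$ increases. Indeed, by H\"older's inequality (applied with the constant function $1$ on a set of measure $1$), for any $1 \leq \balpha \leq \balpha'$ one has $\|V\|_\balpha \leq \|V\|_{\balpha'}$, so the ball $\{\|V\|_{\balpha'} \leq A\}$ is contained in $\{\|V\|_\balpha \leq A\}$. Since $v_\balpha$ is the supremum of the \emph{same} functional $\DD_V$ over a \emph{larger} feasible set when $\balpha$ is smaller, monotonicity $v_{\balpha'} \leq v_\balpha$ is immediate. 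This gives that $v_\balpha$ is decreasing, and in particular that $\lim_{\balpha \to 1+} v_\balpha$ exists and satisfies $\lim_{\balpha \to 1+} v_\balpha \leq v_1$.

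It remains to prove the reverse inequality $\lim_{\balpha \to 1+} v_\balpha \geq v_1$, which is the genuine content of the convergence claim. First I would fix the optimal $L^1$ potential $V_A$ furnished by Theorem~\ref{thm1}, which is the scaled characteristic function $(A/\ell(A))\chi_{\ell(A)}$ with $\DD_{V_A} = v_1$. The idea is to approximate $V_A$ by $L^\balpha$-admissible potentials whose determinant value is close to $v_1$. Since $V_A$ is bounded (indeed $\|V_A\|_\iy = A/\ell(A) < \iy$) and supported on a set of finite measure, one has $\|V_A\|_\balpha \to \|V_A\|_1 = A$ as $\balpha \to 1+$, by dominated convergence applied to $\int_0^1 |V_A|^\balpha\,\d t$. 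Thus the rescaled potentials $W_\balpha := (A/\|V_A\|_\balpha)\,V_A$ satisfy $\|W_\balpha\|_\balpha = A$ and hence are feasible for the $L^\balpha$ problem, giving $v_\balpha \geq \DD_{W_\balpha}$. Moreover $W_\balpha \to V_A$ in $L^1[0,1]$ (the scaling factors tend to $1$ and $V_A$ is fixed in $L^1$).

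The conclusion then follows from the $L^1$-continuity of $\DD$ established in Proposition~\ref{prop10}: since $W_\balpha \to V_A$ in $L^1$ and $\DD$ is continuous on $L^1[0,1]$, we get $\DD_{W_\balpha} \to \DD_{V_A} = v_1$. Combining with $v_\balpha \geq \DD_{W_\balpha}$ yields $\liminf_{\balpha \to 1+} v_\balpha \geq v_1$, which together with the monotonicity bound $\limsup_{\balpha \to 1+} v_\balpha \leq v_1$ forces $\lim_{\balpha \to 1+} v_\balpha = v_1$. I expect the only delicate point to be the verification that $\|V_A\|_\balpha \to \|V_A\|_1$; this is where the boundedness of the specific extremal potential $V_A$ (as opposed to a general $L^1$ function, for which $\|V\|_\balpha$ may blow up as $\balpha \to 1+$) is essential, and it is precisely why one uses the explicit pulse from Theorem~\ref{thm1} rather than an arbitrary near-optimizer. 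Everything else reduces to the continuity property already proved and the elementary inclusion of balls.
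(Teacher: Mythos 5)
Your proposal is correct and follows essentially the same route as the paper: monotonicity from the nesting of the $L^\balpha$ balls on a measure-one interval, and the reverse inequality by rescaling the bounded optimal $L^1$ pulse $V_1$ to $A\,V_1/\|V_1\|_\balpha$, using its feasibility for the $L^\balpha$ problem together with the $L^1$-continuity of $\DD$ from Proposition~\ref{prop10}. The only difference is cosmetic: you spell out the dominated-convergence step $\|V_1\|_\balpha \to \|V_1\|_1$, which the paper leaves implicit in its appeal to essential boundedness.
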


\begin{proof} Let $1 \leq \balpha \leq \bbeta \leq \iy$.
For $V$ in $L^\bbeta[0,1] \subset L^\balpha[0,1]$, $\|V\|_\balpha \leq \|V\|_\bbeta$ so
the radius $A$ ball of $L^\bbeta$ is included in the radius $A$ ball of $L^\balpha$.
As a result of the inclusion of the admissible potentials, $v_\balpha \geq v_\bbeta$.
Because of monotonicity, the limit $\bar{v}:=\lim_{\balpha \to 1+} v_\balpha$ exists,
and $\bar{v} \leq v_1$. Moreover, as proven in Theorem~\ref{thm1}, the unique
maximizing potential $V_1$ for $\balpha=1$ is actually essentially bounded so
\[ \DD(A\cdot V_1/\|V_1\|_\balpha) \leq v_\balpha \]
for any $\balpha \geq 1$. By continuity of $\DD$ on $L^1$, the left-hand side of the
previous inequality tends to $\DD_{V_1}=v_1$ when $\balpha \to 1+$, and one conversely
gets that $v_1 \leq \bar{v}$.\end{proof}

Fix $q > 1$ and $A>0$. As in Section~\ref{s3}, we set $x:=(y,\dot{y},x_3)$
to take into account the $L^q$ constraint.
Then problem (\ref{eq30}) can be rewritten
$-x_1(1) \to \min$ under the dynamical constraints
\begin{eqnarray} \label{eq30a}
  \dot{x}_1 &=& x_2,\\
  \dot{x}_2 &=& Vx_1,\\
  \dot{x}_3 &=& V^q, \label{eq30c}
\end{eqnarray}
and the boundary conditions $x(0)=(0,1,0)$, free $x_1(1)$ and $x_2(1)$,
$x_3(1) \leq A^q$.
The potential is mesurable and can be assumed non-negative
thanks to the comparison result from Proposition~\ref{positivitythm}.
For such an $L^q$ optimal potential $V$, the Pontrjagin maximum principle holds
for the same reason as in the proof of Theorem~\ref{thm1} (see step (iii)).
So there exists a nontrivial pair $(p^0,p) \neq (0,0)$, $p^0 \leq 0$ a constant and
$p:[0,1] \to (\R^3)^*$ a Lip\-schitz covector function such that, a.e.\ on $[0,1]$,
\[ \dot{x}=\frp{H}{p}(x,V,p),\quad \dot{p}=-\frp{H}{x}(x,V,p), \]
and
\[ H(x(t),V(t),p(t)) = \max_{v \geq 0} H(x(t),v,p(t)) \]
where the Hamiltonian $H$ is now equal to
\begin{eqnarray*}
  H(x,V,p) &:=& pf(x,V)\\
  &=& p_1 x_2 + p_2 x_1 V + p_3 V^q.
\end{eqnarray*}
(With $f(x,V)$ denoting the dynamics (\ref{eq30a}-\ref{eq30c}) in compact form.)
In addition to the boundary conditions on $x$, the following transversality
conditions hold (note that $p_3$ is again a constant):
$p_1(1)=-p^0$, $p_2(1)=0$, and $p_3 \leq 0$ with complementarity
\[ (x_3(1)-A^q)\,p_3 = 0. \]
Although the system is not bilinear in $(x,V)$ anymore, the adjoint equation
\begin{equation} \label{eq31}
  -p_2''+V p_2 = 0
\end{equation}
holds unchanged, and one proves normality and strong activation of the $L^q$
cons\-traint similarly to the case $q=1$.

\begin{lemma}
The cost multiplier $p^0$ is negative.
\end{lemma}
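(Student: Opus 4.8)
The plan is to argue by contradiction, transcribing the normality argument of Lemma~\ref{prop0} to the present setting, the only genuine change being that the control set is now the unbounded half-line $[0,\infty)$ rather than the compact $[0,B]$. Suppose, then, that $p^0=0$. Writing out $\dot p=-\partial H/\partial x$ for the Hamiltonian $H=p_1x_2+p_2x_1V+p_3V^q$ gives $p_1'=-p_2V$, $p_2'=-p_1$ and $p_3'=0$, so that $p_2$ solves the second order equation~(\ref{eq31}), that is $-p_2''+Vp_2=0$. The transversality conditions supply $p_2(1)=0$, while $p_1(1)=-p^0=0$ combined with $p_2'=-p_1$ yields $p_2'(1)=0$.

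Next I would propagate this vanishing Cauchy data. By uniqueness for the linear Carath\'eodory problem~(\ref{eq31}) with zero data at $t=1$ (well-posedness being guaranteed by Proposition~\ref{prop10} for $L^q\subset L^1$ potentials), one concludes $p_2\equiv0$ on $[0,1]$, and hence $p_1=-p_2'\equiv0$ as well. Since the pair $(p^0,p)$ is nontrivial and we already have $p^0=0$, $p_1\equiv0$, $p_2\equiv0$, the remaining component $p_3$ must be nonzero; together with the sign constraint $p_3\leq0$ this forces $p_3<0$.

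The decisive step, and the only one that differs from the bilinear case, is the maximization over the unbounded control set. With $p_1=p_2=0$ the Hamiltonian reduces to $H(x,v,p)=p_3v^q$; as $p_3<0$ and $q>1$, the map $v\mapsto p_3v^q$ is strictly decreasing on $[0,\infty)$ (its derivative $p_3qv^{q-1}$ is negative for $v>0$), so its maximum over $v\geq0$ is attained only at $v=0$. The maximization condition of the Pontrjagin principle then forces $V\equiv0$ almost everywhere. But the null potential is not optimal: integrating~(\ref{controlproblem}) for a nonzero constant potential $V\equiv a\in(0,A]$ gives $y(1)=\sinh(\sqrt a)/\sqrt a>1$, so the value of problem~(\ref{eq30}) strictly exceeds the value $\DD_V=1$ at $V\equiv0$. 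This contradiction shows $p^0\neq0$, hence $p^0<0$, and by homogeneity in $(p^0,p)$ one may normalize, e.g.\ $p^0=-1$.

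I expect no serious obstacle here, as the argument largely mirrors Lemma~\ref{prop0}. The one point requiring care is precisely that, the constraint set $[0,B]$ having been replaced by $[0,\infty)$, one can no longer deduce the control from the sign of a switching function; instead the maximizer is pinned at the boundary $v=0$ by the strict monotonicity of $v\mapsto p_3v^q$ for $p_3<0$, $q>1$. The only other ingredient, uniqueness propagating the zero Cauchy data of $p_2$, is already furnished by Proposition~\ref{prop10}.
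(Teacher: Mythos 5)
Your proof is correct and follows essentially the same route as the paper's: assume $p^0=0$, use the adjoint equation $-p_2''+Vp_2=0$ with the vanishing terminal data $p_2(1)=p_2'(1)=0$ to get $p_1\equiv p_2\equiv 0$, invoke nontriviality and $p_3\le 0$ to force $p_3<0$, and then observe that maximizing $H=p_3v^q$ over $v\ge 0$ pins $V\equiv 0$ a.e., contradicting the (easily verified) non-optimality of the zero potential. The only cosmetic difference is that the paper postpones normalization, later choosing a scaling of $p_3$ rather than setting $p^0=-1$, but this does not affect the lemma.
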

\begin{proof}
By contradiction: if $p^0=0$, one has $p_2$ (and $p_1$) identically zero by
(\ref{eq31}), so $p_3$ cannot also be zero and must be negative.
Then, $H=p_3 V^q$ and the
maximization condition implies $V=0$ a.e.\ since $V$ is non-negative. This is
contradictory as the zero control is admissible but clearly not optimal.
\end{proof}

\noi We will not set $p^0=-1$ but will use the fact that $p_3$ is also negative to use a
different normalization instead.

\begin{lemma} \label{lem10}
The constraint $\int_0^1 V^q\,\d t \leq A^q$ is strongly active ($p_3<0$).
\end{lemma}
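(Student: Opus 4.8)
The goal is to prove Lemma~\ref{lem10}, namely that the $L^q$ constraint is strongly active, equivalently that the multiplier $p_3$ is strictly negative. The natural strategy is proof by contradiction, exactly mirroring the structure of Lemma~\ref{prop2} in the case $q=1$: assume $p_3=0$ and derive that the optimal potential must be identically zero, which contradicts optimality (the zero control gives $y(1)=1$, whereas any nonzero admissible potential yields a strictly larger determinant, as is clear from Proposition~\ref{thmupperbound} or from the sign of $\ddot y = Vy \geq 0$).

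The plan is as follows. First I would suppose, for contradiction, that $p_3=0$. Under this assumption the Hamiltonian reduces to $H = p_1 x_2 + p_2 x_1 V$, and the maximization condition $H(x,V,p) = \max_{v \geq 0} H(x,v,p)$ forces the coefficient $p_2 x_1$ of $V$ to control the optimal choice of the potential. The key sign information comes from reusing the positivity results already established: since $-x_1''+Vx_1=0$ with $x_1(0)=0$, $x_1'(0)=1$ and $V \geq 0$, Proposition~\ref{prop10} gives $x_1 > 0$ on $(0,1]$ (indeed $x_1(t) \geq t$ by integrating $\ddot x_1 = Vx_1 \geq 0$). Symmetrically, the adjoint equation (\ref{eq31}), $-p_2''+Vp_2=0$ with terminal data $p_2(1)=0$ and $p_2'(1)=-1$ (normality, from the previous lemma), combined with the comparison/positivity argument run backwards from $t=1$, yields $p_2 > 0$ on $[0,1)$, in fact $p_2(t) \geq 1-t$.

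With both $x_1>0$ and $p_2>0$ on the relevant interior, the coefficient $p_2 x_1$ of $V$ in $H$ is strictly positive on $(0,1)$. The maximization of $H$ over $v \geq 0$ then has no finite maximizer: $H$ is strictly increasing in $v$, so the supremum is $+\infty$ and no optimal bounded potential can satisfy the maximum principle — or, phrased without the unbounded control pathology, one can only avoid this by having the relevant coefficient vanish, which it does not. This is precisely the obstruction that forces $p_3 \neq 0$. The cleanest way to close the argument is to observe that the maximization condition cannot hold for any measurable real-valued $V$ when $p_2 x_1 > 0$ and $p_3 = 0$, so the assumption $p_3=0$ is untenable; combined with the transversality sign condition $p_3 \leq 0$ we conclude $p_3 < 0$, and the complementarity relation $(x_3(1)-A^q)p_3 = 0$ then forces $x_3(1)=A^q$, i.e.\ $\|V\|_q = A$, which is the strong activation asserted.

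The main subtlety, and the step I would be most careful with, is the backward positivity of $p_2$: one must justify $p_2 > 0$ on $[0,1)$ from the terminal conditions. This is obtained by applying Proposition~\ref{prop10} (or the comparison argument of Proposition~\ref{positivitythm}) to the time-reversed function $\tilde p_2(t) := p_2(1-t)$, which solves the same type of initial value problem $-\tilde p_2'' + \tilde V \tilde p_2 = 0$ with $\tilde p_2(0)=0$, $\tilde p_2'(0)=1$ and $\tilde V(t) := V(1-t) \geq 0$; hence $\tilde p_2 \geq 0$, giving $p_2(t) \geq 0$, and strict positivity on the interior follows as in Proposition~\ref{prop10}. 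Once the two sign facts are in hand the contradiction is immediate, so this lemma is genuinely the exact analogue of Lemma~\ref{prop2}, the only difference being that the nonlinearity $V^q$ appears in $H$ through the $p_3 V^q$ term, which is harmless once $p_3$ is set to zero.
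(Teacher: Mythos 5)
Your proposal is correct and is essentially the paper's own proof: the paper likewise observes that $p_2x_1>0$ on $(0,1)$ by the same estimates as in Lemma~\ref{prop2} (namely $x_1(t)\geq t$ and $p_2(t)\geq 1-t$, the latter via the adjoint equation~(\ref{eq31}) run backwards), and then notes that $p_3=0$ would make $H=p_1x_2+p_2x_1V$ impossible to maximize over the unbounded set $v\geq 0$. The only cosmetic difference is your normalization $p_2'(1)=-1$: in the $L^q$ section the paper deliberately keeps $p^0<0$ un-normalized (reserving the scaling freedom for $p_3$ later), but since only the sign of $p_2'(1)=p^0$ matters here, this does not affect the argument.
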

\begin{proof} Observe that, as in Lemma~\ref{prop2}, $p_2x_1$ is positive on $(0,1)$.
Now, assume by contradiction that $p_3=0$: then $H=p_1x_2+p_2x_1V$, which would
prevent maximization of $H$ on a nonzero measure subset. 
\end{proof}

\noi Define $\Psi:=p_2x_1$. As we have just noticed, it is positive on $(0,1)$, and
$\Psi(0)=\Psi(1)=0$ because of the boundary and transversality conditions
($x_1(0)=0$ and $p_2(1)=0$, respectively).

\begin{prop} \label{prop20}
One has
\[ \Psi''-|\Psi|^\alpha+2H = 0,\quad \alpha=q/(q-1), \]
\[ \Psi(0)=0,\quad \Psi(1)=0, \]
and
\begin{equation} \label{eq90}
  V = \frac{q}{4q-2}\sqrt[q-1]{\Psi}.
\end{equation}
\end{prop}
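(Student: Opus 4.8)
The plan is to read off $V$ from the pointwise maximization of $H$, then to differentiate $\Psi=p_2x_1$ twice, eliminate the remaining unknown using the constancy of the Hamiltonian, and finally use the freedom in normalizing the covector to fix the constant in front of the nonlinear term. First I would extract $V$ from the maximization condition. As a function of $v\geq 0$, $H(x,v,p)=p_1x_2+\Psi v+p_3v^q$ with $\Psi=p_2x_1$; since $p_3<0$ (Lemma~\ref{lem10}) and $q>1$, this is strictly concave on $(0,\iy)$, and because $\Psi>0$ on $(0,1)$ the maximum is attained at an interior point. Setting $\partial H/\partial v=\Psi+qp_3v^{q-1}=0$ yields
\[ V=\left(\frac{\Psi}{-qp_3}\right)^{1/(q-1)}, \]
so it only remains to compute the constant $-qp_3$ and to derive the differential equation for $\Psi$.

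Next I would differentiate $\Psi=p_2x_1$ twice using the state equations $x_1'=x_2$, $x_2'=Vx_1$ and the adjoint equations $p_1'=-p_2V$, $p_2'=-p_1$ (the latter being exactly \eqref{eq31}). This gives $\Psi'=-p_1x_1+p_2x_2$ and then $\Psi''=2V\Psi-2p_1x_2$. The term $p_1x_2$ is eliminated by using that the Hamiltonian is constant along the extremal (the system being autonomous): from $H=p_1x_2+\Psi V+p_3V^q$ one gets $p_1x_2=H-\Psi V-p_3V^q$, whence
\[ \Psi''=4V\Psi-2H+2p_3V^q. \]
Writing $k:=-qp_3>0$, the maximization relation reads $\Psi=kV^{q-1}$, so $V\Psi=kV^q$ and $p_3V^q=-(k/q)V^q$; collecting terms,
\[ \Psi''=k\,\frac{4q-2}{q}\,V^q-2H. \]

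Finally I would fix the normalization. Since $(p^0,p)$ is defined only up to a positive scalar and $p_3<0$, the constant $k=-qp_3$ may be assigned any positive value; under such a rescaling $\Psi$, $H$ and $\Psi''$ all scale linearly (and $V$ is unchanged), so the displayed identity is consistent, whereas the target term $\Psi^\alpha=k^{q/(q-1)}V^q$ (using $\Psi=kV^{q-1}$ and $(q-1)\alpha=q$) scales differently. I would therefore choose $k$ so that $k\,(4q-2)/q=k^{q/(q-1)}$, i.e.\ $k^{1/(q-1)}=(4q-2)/q$, equivalently $k=((4q-2)/q)^{q-1}$. With this single normalization the coefficient of $V^q$ equals $\Psi^\alpha/V^q$, so $\Psi''=\Psi^\alpha-2H$; as $\Psi\geq 0$ on $[0,1]$ this is the claimed equation $\Psi''-|\Psi|^\alpha+2H=0$. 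The same value of $k$ turns $V=(\Psi/k)^{1/(q-1)}$ into $V=\frac{q}{4q-2}\sqrt[q-1]{\Psi}$, which is \eqref{eq90}, and $\Psi(0)=\Psi(1)=0$ is immediate from $x_1(0)=0$ and $p_2(1)=0$. The only genuinely delicate point is this normalization step: one must check that rescaling the adjoint is legitimate, that $H$ is truly constant, and that matching the coefficient of $V^q$ is consistent with the exponent $\alpha=q/(q-1)$; everything else is a direct computation.
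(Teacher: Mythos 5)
Your proposal is correct and follows essentially the same route as the paper: read off $V=\bigl(\Psi/(-qp_3)\bigr)^{1/(q-1)}$ from the maximization condition, differentiate $\Psi=p_2x_1$ twice using the state and adjoint equations, eliminate $p_1x_2$ via the constancy of the Hamiltonian, and then use the positive-scalar homogeneity of $(p^0,p)$ to normalize $p_3$ (your $k=((4q-2)/q)^{q-1}$ is exactly the paper's choice $-p_3=B_q^{q-1}$, since $B_q^{q-1}=\tfrac{1}{q}\bigl(\tfrac{4q-2}{q}\bigr)^{q-1}$). Your explicit check that the rescaling is legitimate because $\Psi$, $H$, $\Psi''$ scale linearly while $V$ is invariant is a nice touch that the paper leaves implicit.
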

\begin{proof}
Because $\Psi$ is non-negative, it is clear
from the maximization condition that
\[ V(t) = \sqrt[q-1]{\frac{\Psi(t)}{-qp_3}} \] 
for all $t \in (0,1)$. Since $\Psi$ is an absolutely continuous function, we can
differentiate once to get
\[ \dot{\Psi} = -p_1x_1+p_2x_2, \]
and iterate to obtain
\[ \ddot{\Psi} = 2(p_2x_1V-p_1x_2) = 2(V\Psi-p_1x_2). \]
Using the fact that the Hamiltonian $H$ is constant along an extremal, and
substituing $p_1x_2$ by $H-V\Psi-p_3V^q$ and $V$ by its expression, the
following second order differential equation is obtained for $\Psi$:
\[ \Psi''-\frac{B_q}{\sqrt[q-1]{-p_3}}|\Psi|^{q/(q-1)}+2H = 0 \]
with
\[ B_q = \frac{4}{q^{1/(q-1)}}-\frac{2}{q^{q/(q-1)}} \cdot \]
We can normalize $p_3$ in order that $-p_3=B_q^{q-1}$, which gives the desired
differential equation for $\Psi$, as well as the desired expression for $V$.
\end{proof}

\begin{cor} \label{cor13}
The function $\Psi$ (and so $V$) is symmetric \emph{wrt.} $t=1/2$, and
$\Psi'(0)=H-c(A,q)$ with
\[ c(A,q) = \frac{1}{2} \left( \frac{A(4q-2)}{q} \right)^q. \]
\end{cor}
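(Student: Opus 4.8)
The plan is to handle the two assertions in turn, obtaining the symmetry first and then reading off the value of $\Psi'(0)$ from a single integration of the differential equation of Proposition~\ref{prop20}. Throughout I use that $\Psi \geq 0$ on $[0,1]$, so that $|\Psi|^\alpha = \Psi^\alpha$ and the equation reads $\Psi''=\Psi^\alpha-2H$, and that $\Psi=p_2x_1$ is $C^2$: indeed $p_2$ and $x_1$ solve Sturm--Liouville equations with $L^q$ potential, hence lie in $C^1$, so $\Psi\in C^1$, and then $\Psi''=\Psi^\alpha-2H$ is continuous, giving $\Psi\in C^2$. This regularity is all that the calculus below requires.

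For the symmetry I would exploit that the equation is autonomous. Multiplying by $\Psi'$ and integrating shows that
\[ E:=\tfrac12(\Psi')^2-\tfrac{1}{\alpha+1}\Psi^{\alpha+1}+2H\Psi \]
is constant on $[0,1]$. Evaluating $E$ at the two endpoints, where $\Psi$ vanishes, gives $(\Psi'(0))^2=(\Psi'(1))^2$. Since $\Psi>0$ on $(0,1)$ while $\Psi(0)=\Psi(1)=0$, one has $\Psi'(0)\geq 0$ and $\Psi'(1)\leq 0$, forcing $\Psi'(1)=-\Psi'(0)$. I would then consider the reflection $\Phi(t):=\Psi(1-t)$, which solves the same autonomous equation and satisfies $\Phi(0)=\Psi(1)=0=\Psi(0)$ together with $\Phi'(0)=-\Psi'(1)=\Psi'(0)$. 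As the right-hand side $\Psi\mapsto\Psi^\alpha-2H$ is locally Lipschitz---indeed $C^1$, since $\alpha=q/(q-1)>1$ keeps $\Psi^\alpha$ continuously differentiable up to $\Psi=0$---the initial value problem has a unique solution, whence $\Phi\equiv\Psi$, that is $\Psi(t)=\Psi(1-t)$. Symmetry of $V$ is then immediate from $V=\tfrac{q}{4q-2}\Psi^{1/(q-1)}$ being a fixed increasing function of $\Psi$.

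For the value of $\Psi'(0)$, I would integrate $\Psi''=\Psi^\alpha-2H$ over $[0,1]$ to get $\Psi'(1)-\Psi'(0)=\int_0^1\Psi^\alpha\,\d t-2H$, and use $\Psi'(1)=-\Psi'(0)$ (a direct consequence of the symmetry just proved) to obtain $-2\Psi'(0)=\int_0^1\Psi^\alpha\,\d t-2H$. The integral is then pinned down by the constraint: by Lemma~\ref{lem10} the $L^q$ constraint is strongly active, so complementarity gives $\int_0^1 V^q\,\d t=A^q$; substituting $V^q=(q/(4q-2))^q\Psi^\alpha$ yields $\int_0^1\Psi^\alpha\,\d t=(A(4q-2)/q)^q=2c(A,q)$. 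Plugging this in gives $-2\Psi'(0)=2c(A,q)-2H$, i.e.\ $\Psi'(0)=H-c(A,q)$, as claimed.

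I expect the only delicate point to be the symmetry argument, and within it the two sign determinations $\Psi'(0)\geq 0$, $\Psi'(1)\leq 0$ together with the appeal to uniqueness of the initial value problem; the latter is legitimate precisely because $\alpha>1$. Everything else reduces to the first integral of an autonomous second-order equation and one integration of the equation, both routine once the $C^2$ regularity of $\Psi$ is noted.
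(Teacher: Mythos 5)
Your proof is correct and follows essentially the same route as the paper: the first integral $\tfrac12\Psi'^2-\tfrac{1}{\alpha+1}\Psi^{\alpha+1}+2H\Psi$ gives $\Psi'(0)^2=\Psi'(1)^2$, a sign argument yields $\Psi'(1)=-\Psi'(0)$, reflection plus uniqueness for the (locally Lipschitz, since $\alpha>1$) initial value problem gives symmetry, and integrating the equation combined with the active $L^q$ constraint of Lemma~\ref{lem10} pins down $\Psi'(0)=H-c(A,q)$. The only cosmetic difference is that you obtain the signs $\Psi'(0)\geq 0$, $\Psi'(1)\leq 0$ from positivity of $\Psi$ on $(0,1)$, whereas the paper uses the strict endpoint identities $\Psi'(0)=p_2(0)>0$ and $\Psi'(1)=-x_1(1)<0$; both suffice here.
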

\begin{proof}
As a result of the previous proposition, the quantity
\[ \frac{1}{2}\Psi'^2-\frac{1}{\alpha+1}\Psi|\Psi|^\alpha+2H\Psi \]
is constant. In particular, $\Psi(0)=\Psi(1)=0$ implies that $\Psi'^2(0)=\Psi'^2(1)$.
Now, $\Psi'(0)=p_2(0)>0$ and $\Psi'(1)=-x_1(1)<0$ (same estimates as in
Lemma~\ref{prop2}), so $\Psi'(1)=-\Psi'(0)$. Setting
$\hat{\Psi}(t):=\Psi(1-t)$, one then checks that both $\Psi$ and $\hat{\Psi}$ verify
the same differential equation, with the same initial conditions: $\hat{\Psi}=\Psi$
and symmetry holds.
Finally, since the $\L^q$ constraint is
active (Lemma~\ref{lem10}),
\[ A^q = \int_0^1 |V|^q\,\d t
       = \left( \frac{q}{4q-2} \right)^q \int_0^1 |\Psi|^\alpha\,\d t, \]
and one can replace $|\Psi|^\alpha$ by $\Psi''+2H$ to integrate and obtain
\[ \Psi'(1)-\Psi'(0)+2H = \left( \frac{A(4q-2)}{q} \right)^q. \]
Hence the conclusion using $\Psi'(0)-\Psi'(1)=2\Psi'(0)$.
\end{proof}

\noi According to what has just been proved, $t \mapsto (\Psi(t),\Psi'(t))$
parameterizes the curve $y^2=f(x)$ where $f$ (that depends on $H$, $A$ and $q$) is
\begin{equation} \label{eq61}
  f(x) = \frac{2}{\alpha+1}x|x|^\alpha-4Hx+(H-c(A,q))^2 \quad (\alpha=q/(q-1)).
\end{equation}
Since $\Psi'(0)=H-c(A,q)$ is positive, $H > c(A,q) > 0$ and $f$ has a local minimum
(\emph{resp.}\ maximum) at $x=\sqrt[\alpha]{2H}$ (\emph{resp.}\ $-\sqrt[\alpha]{2H}$).

\begin{lemma} \label{lem12}
On $(c(A,q),\infty)$, there exists a unique $H$, denoted $h(A,q)$,
such that $\sqrt[\alpha]{2H}$ is a double root of $f$.
\end{lemma}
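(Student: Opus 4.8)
The plan is to reduce the double-root condition to a single scalar equation in $H$ and then analyze that equation by elementary monotonicity. The key observation is that $x^* := \sqrt[\alpha]{2H}$ is, by construction, precisely the positive critical point of $f$: for $x>0$ one has $f(x) = \frac{2}{\alpha+1}x^{\alpha+1}-4Hx+(H-c)^2$ (writing $c:=c(A,q)$ and using $|x^*|^\alpha=(x^*)^\alpha$), so that $f'(x)=2x^\alpha-4H$ vanishes exactly at $x^*$. Since moreover $f''(x^*)=2\alpha(x^*)^{\alpha-1}>0$, the point $x^*$ is a double root (of multiplicity exactly two) if and only if the single condition $f(x^*)=0$ holds.

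First I would substitute $x^*$ into $f$, using $(x^*)^{\alpha+1}=(x^*)^\alpha x^*=2Hx^*$, to obtain
\[ f(x^*) = -\frac{4\alpha H}{\alpha+1}\,(2H)^{1/\alpha}+(H-c)^2, \]
so that the double-root condition becomes the transcendental equation
\[ (H-c)^2 = \frac{4\alpha}{\alpha+1}\,2^{1/\alpha}\,H^{(\alpha+1)/\alpha} \]
in the single unknown $H$. I would then recast this as $\phi(H)=K$, where
\[ \phi(H) := \frac{(H-c)^2}{H^{(\alpha+1)/\alpha}},\qquad K := \frac{4\alpha}{\alpha+1}\,2^{1/\alpha}>0, \]
and study $\phi$ on $(c,\infty)$. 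It is continuous there with $\phi(c^+)=0$; and because the numerator grows like $H^2$ while the denominator grows like $H^{(\alpha+1)/\alpha}$ with exponent $(\alpha+1)/\alpha<2$ (as $\alpha>1$), one has $\phi(H)\to+\infty$ as $H\to\infty$. The intermediate value theorem then yields existence of a solution in $(c,\infty)$.

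For uniqueness, which I expect to be the only point requiring a little care, I would show that $\phi$ is strictly increasing on $(c,\infty)$. Differentiating logarithmically,
\[ \frac{\phi'(H)}{\phi(H)} = \frac{2}{H-c}-\frac{\alpha+1}{\alpha H}, \]
which is positive exactly when $2\alpha H>(\alpha+1)(H-c)$, that is $(\alpha-1)H>-(\alpha+1)c$; this holds for every $H>c>0$ precisely because $\alpha=q/(q-1)>1$ for all $q>1$. Hence $\phi$ is a strictly increasing bijection from $(c,\infty)$ onto $(0,\infty)$, and $\phi(H)=K$ has a unique solution $H=h(A,q)$. The crucial structural input is thus the inequality $\alpha>1$, which simultaneously forces the growth exponent $(\alpha+1)/\alpha$ strictly below $2$ (guaranteeing $\phi\to+\infty$) and makes the logarithmic derivative positive throughout $(c,\infty)$.
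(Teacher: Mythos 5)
Your proof is correct. The reduction is identical to the paper's: both evaluate $f$ at its positive critical point $\sqrt[\alpha]{2H}$ (where $f'$ vanishes automatically, so the double-root condition collapses to $f(\sqrt[\alpha]{2H})=0$), arrive at the same scalar equation $(H-c)^2=\frac{2\alpha}{\alpha+1}(2H)^{1+1/\alpha}$, and both existence arguments rest on the same inequality $1+1/\alpha<2$. Where you genuinely diverge is the uniqueness step. The paper keeps the function $g(H):=(H-c)^2-\frac{2\alpha}{\alpha+1}(2H)^{1+1/\alpha}$ and dismisses uniqueness with the remark that it is ``clear inspecting $g''$''; fleshed out, that argument needs the observations that $g''$ is strictly increasing and that $g'(c)<0$, whence $g$ is strictly decreasing then strictly increasing on $(c,\infty)$ and can vanish only once --- none of which is spelled out. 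You instead divide the equation by $H^{(\alpha+1)/\alpha}$ and show via logarithmic differentiation that $\phi(H)=(H-c)^2/H^{(\alpha+1)/\alpha}$ is strictly increasing, hence a bijection from $(c,\infty)$ onto $(0,\infty)$. This buys existence and uniqueness in a single stroke, makes the role of $\alpha>1$ explicit (it forces both the growth exponent below $2$ and the positivity of $\phi'/\phi$), and replaces the paper's unstated second-derivative bookkeeping with a one-line sign check; the paper's route is equally elementary in the end, but yours is the more self-contained of the two.
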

\begin{proof}
Evaluating,
\[ f(\sqrt[\alpha]{2H}) = (H-c(A,q))^2-\frac{2\alpha}{\alpha+1}(2H)^{1+1/\alpha}
                        =: g(H). \]
As $g(c(A,q))<0$ and $g(H) \to \infty$ when $H \to \infty$ (note that $1+1/\alpha<2$),
$g$ has one zero in $(c(A,q),\infty)$, and only one in this interval as is clear
inspecting $g''$.
\end{proof}

\begin{prop} \label{prop11}
The value $H$ of the Hamiltonian must belong to the nonempty open interval
$(c(A,q),h(A,q))$.
\end{prop}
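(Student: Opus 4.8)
The plan is to extract the two-sided bound on $H$ directly from the phase portrait of the conserved one-dimensional system found in Proposition~\ref{prop20} and Corollary~\ref{cor13}. Nonemptiness of the interval is immediate from Lemma~\ref{lem12}, which places $h(A,q)$ in $(c(A,q),\infty)$ and hence gives $h(A,q)>c(A,q)$. The lower bound $H>c(A,q)$ has already been recorded, since $\Psi'(0)=H-c(A,q)$ was shown to be positive. So the whole content is the upper bound $H<h(A,q)$, which I would prove by contradiction, ruling out $H\geq h(A,q)$.

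First I would recall the first integral of Corollary~\ref{cor13}: evaluating the conserved quantity at $t=0$ gives $(\Psi')^2=f(\Psi)$ with $f$ as in~\eqref{eq61}, while Proposition~\ref{prop20} gives the autonomous equation $\Psi''=\Psi^\alpha-2H$ (using $\Psi\geq 0$). Its unique positive equilibrium is $x_*:=\sqrt[\alpha]{2H}$, which is exactly the point where $f$ attains its minimum over $x\geq 0$, and one has $f(x_*)=g(H)$ in the notation of Lemma~\ref{lem12}. Since $\Psi(0)=0$, $\Psi'(0)>0$, $\Psi(1)=0$, with $\Psi$ positive on $(0,1)$ and symmetric about $t=1/2$, the trajectory must climb from $\Psi=0$ to a maximum value $\Psi_{\max}$, where $\Psi'=0$ and hence $f(\Psi_{\max})=0$, and return; the ascending half is traversed in time exactly $1/2$, so
\[ \frac{1}{2} = \int_0^{\Psi_{\max}} \frac{\d x}{\sqrt{f(x)}}. \]
In particular $f$ must have a simple positive root.

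The contradiction with $H\geq h(A,q)$ then comes from the sign of $g$. Because $g(c(A,q))<0$, $g\to\infty$ at infinity, and $g$ has its unique zero on $(c(A,q),\infty)$ precisely at $h(A,q)$, one gets $g(H)>0$ for $H>h(A,q)$ and $g(H)=0$ for $H=h(A,q)$. If $g(H)>0$, then $f(0)=(H-c(A,q))^2>0$ together with $\min_{x\geq 0}f=g(H)>0$ forces $f>0$ on $[0,\infty)$; thus $\Psi'$ never vanishes, $\Psi$ is strictly monotone, and $\Psi(0)=\Psi(1)=0$ with $\Psi\not\equiv 0$ is impossible. If $g(H)=0$, then $x_*$ is a double root of $f$ with $f''(x_*)=2\alpha x_*^{\alpha-1}>0$, so $f(x)\sim\tfrac{1}{2}f''(x_*)(x-x_*)^2$ near $x_*$ and the integral above diverges, contradicting its value $1/2$. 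Hence $g(H)<0$, which by the same sign analysis means $H\in(c(A,q),h(A,q))$.

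The main obstacle is the borderline case $H=h(A,q)$, where one must rule out a finite-time orbit. The cleanest justification is the logarithmic-type divergence of $\int_0^{x_*}\d x/\sqrt{f(x)}$ at the double root, transparent from the quadratic vanishing of $f$ there; equivalently, $x_*$ is a hyperbolic saddle of $\Psi''=\Psi^\alpha-2H$, whose linearization $\eta''=\alpha x_*^{\alpha-1}\eta$ has a strictly positive coefficient, so the stable manifold through $(0,H-c(A,q))$ reaches it only as $t\to\infty$. The case $H>h(A,q)$ is routine, as it is settled by the monotonicity argument alone.
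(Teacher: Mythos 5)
Your proof is correct and follows essentially the same route as the paper: both rest on the trichotomy of Lemma~\ref{lem12} according to the sign of $g(H)=f(\sqrt[\alpha]{2H})$, exclude $H>h(A,q)$ because the level set $(\Psi')^2=f(\Psi)$ then admits no turning point with $\Psi\geq 0$, and exclude $H=h(A,q)$ because the double root $\sqrt[\alpha]{2H}$ is a saddle equilibrium that the orbit through $(0,H-c(A,q))$ can only reach as $t\to\infty$. The only difference is one of explicitness: where the paper appeals to Figure~\ref{fig1} and a one-line saddle remark, you justify case (iii) analytically via $\min_{x\geq 0}f=g(H)>0$ forcing $\Psi'$ to keep a fixed sign, and case (ii) via the divergence of $\int \d x/\sqrt{f(x)}$ at the quadratic zero of $f$, which is a welcome sharpening of the same argument.
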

\begin{proof} As a result of the previous lemma,
for $H>c(A,q)$ there are three possibilities for the curve $y^2=f(x)$
parameterized by $(\Psi,\Psi')$
depending on whether (i) $H$ belongs to $(c(A,q),h(A,q))$, (ii) $H=h(A,q)$, (iii)
$H>h(A,q)$. As is clear from Figure~\ref{fig1}, given the boundary conditions
$\Psi(0)=\Psi(1)=0$, $\Psi'(0)>0$ and $\Psi'(1)<0$, case (iii) is excluded. Now, in
case (ii), the point $(\!\sqrt[\alpha]{2h(A,q)},0)$ is a saddle equilibrium point,
which prevents connexions between $(\Psi,\Psi')=(0,h(A,q)-c(A,q))$ and
$(0,c(A,q)-h(A,q))$.
\end{proof}

\begin{cor} The function $\psi$ (and so $V$) is strictly increasing on
$[0,1/2]$.
\end{cor}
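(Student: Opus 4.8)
The plan is to read the monotonicity straight off the first integral $(\Psi')^2 = f(\Psi)$, with $f$ as in \eqref{eq61}, that was exhibited just before Lemma~\ref{lem12}, using the precise location of the roots of $f$ together with the symmetry established in Corollary~\ref{cor13}.

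First I would describe $f$ on $[0,\infty)$. One has $f(0) = (H-c(A,q))^2 > 0$ and, for $x>0$, $f'(x) = 2x^\alpha - 4H$, so $f$ decreases from $f(0)$ to its unique minimum at $x = \sqrt[\alpha]{2H}$ and then increases to $+\infty$. By Proposition~\ref{prop11}, $H < h(A,q)$, whence $f(\sqrt[\alpha]{2H}) = g(H) < 0$ in the notation of Lemma~\ref{lem12}. Hence $f$ has exactly two positive roots $0 < x_1 < \sqrt[\alpha]{2H} < x_2$, with $f>0$ on $[0,x_1)$ and $f<0$ on $(x_1,x_2)$.

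Next I would follow the trajectory $t \mapsto (\Psi(t),\Psi'(t))$. At $t=0$ we have $\Psi(0)=0$ and $\Psi'(0) = H - c(A,q) > 0$ (Corollary~\ref{cor13} and Proposition~\ref{prop11}), so the trajectory starts on the upper branch $\Psi' = +\sqrt{f(\Psi)}$. Since $\Psi$ is continuous, nonnegative on $[0,1]$ (it equals $p_2x_1$, which is positive on $(0,1)$), and the gap $(x_1,x_2)$ is forbidden because $f<0$ there, the value $\Psi$ stays confined to $[0,x_1]$. On this set $\Psi'$ can vanish only where $f(\Psi)=0$, i.e.\ at $\Psi=0$ or $\Psi=x_1$. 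Setting $t^\ast := \inf\{t>0 : \Psi'(t)=0\}$, on $(0,t^\ast)$ one has $\Psi'>0$, so $\Psi$ strictly increases and necessarily $\Psi(t^\ast)=x_1$.

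Finally I would pin down $t^\ast = 1/2$ and conclude. Symmetry (Corollary~\ref{cor13}) gives $\Psi'(1/2)=0$, so $t^\ast \le 1/2$. If $t^\ast < 1/2$, then at $t^\ast$ we would have $\Psi''(t^\ast)=x_1^\alpha - 2H < 0$ (because $x_1<\sqrt[\alpha]{2H}$); the trajectory then switches to the lower branch $\Psi'=-\sqrt{f(\Psi)}$ and $\Psi$ strictly decreases back to $0$. By autonomy of the equation this descent takes the same time as the ascent, so $\Psi$ reaches $0$ at $t=2t^\ast<1$ with $\Psi'=-(H-c(A,q))<0$, contradicting $\Psi\ge 0$ on $[0,1]$. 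Hence $t^\ast=1/2$, so $\Psi'>0$ on $(0,1/2)$ and $\Psi$ is strictly increasing on $[0,1/2]$; by \eqref{eq90}, so is $V$. The one delicate point is precisely this exclusion of a turning of $\Psi$ strictly before $t=1/2$, which the forbidden gap $(x_1,x_2)$, the sign $\Psi''(x_1)<0$, and the nonnegativity of $\Psi$ together rule out.
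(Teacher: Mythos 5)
Your proof is correct and takes essentially the same route as the paper, which simply reads the positivity of $\Psi'$ on $(0,1/2)$ off the phase portrait of $y^2=f(x)$ (Figure~\ref{fig1}, case (i)) and then invokes (\ref{eq90}). Your write-up supplies rigorously what the paper delegates to the figure---the root structure of $f$ for $H\in(c(A,q),h(A,q))$, the confinement of $\Psi$ to $[0,x_1]$, and the reflection argument excluding a turning point before $t=1/2$---up to one harmless slip: $\Psi=0$ is not a zero of $f$ (indeed $f(0)=(H-c(A,q))^2>0$), which only reinforces your conclusion that $\Psi(t^\ast)=x_1$.
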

\begin{proof} The derivative of $\psi$ is strictly positive for $t$ in $(0,1/2)$ as is
clear from Figure~\ref{fig1}, case (i). As a consequence,
$\psi$ is strictly increasing on $[0,1/2]$, and so is $V$ by virtue of (\ref{eq90}).
\end{proof}

\begin{figure}
\centering \includegraphics[width=1.0\textwidth]{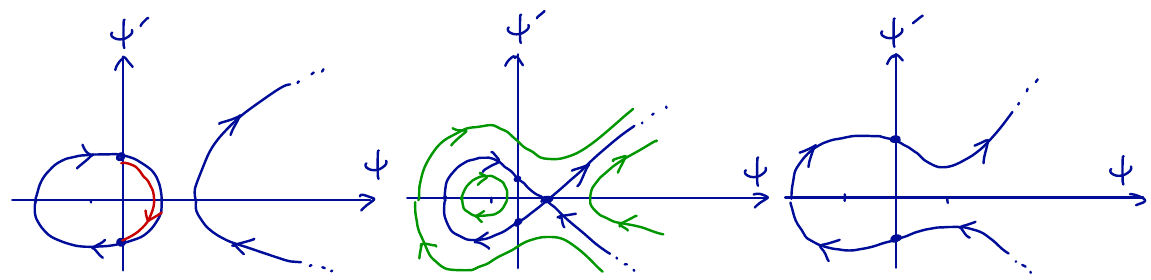}
\caption{Portraits of the curve $y^2=f(x)$ parameterized by $(\Psi,\Psi')$. From left
to right: case (i) $H \in (c(A,q),h(A,q))$ (the red part corresponds to $t \in [0,1]$),
(ii) $H=h(A,q)$ (phase portrait in green
for other boundary conditions close to the saddle equilibrium), (iii) $H>h(A,q)$.}
\label{fig1}
\end{figure}

\begin{proof}[Proof of Theorem~\ref{thm2}.] 
Let $q>1$ and $A>0$ be given.
Optimal potentials exist by
Proposition~\ref{prop8}.
Such an optimal potential must be given by $\Psi$ according to Proposition~\ref{prop20}.
By virtue of Proposition~\ref{prop11},
this function $\Psi$ is obtained as the solution $\Psi(\cdot,H)$ of
\begin{equation} \label{eq20}
  \Psi''-|\Psi|^\alpha+2H = 0,\quad \Psi(0)=0,\quad \Psi'(0)=H-c(A,q),
\end{equation}
for some $H$ in $(c(A,q),h(A,q))$ such that $\Psi(1,H)=0$. For any $H$ in this
interval, let us first notice that $(\Psi,\Psi')$ define a parameterization of
the bounded component of the curve $y^2=f(x)$ (see Figure~\ref{fig1}). Accordingly,
both $\Psi$ and $\Psi'$ are bounded, and the solution $\Psi(\cdot,H)$ of (\ref{eq20})
is defined globally, for all $t \in \R$. Hence, the function $H \mapsto \Psi(1,H)$ is
well defined on $(c(A,q),h(A,q))$. Proving that this mapping is injective will entail
uniqueness of an $H$ such that $\Psi(1,H)=0$, and thus uniqueness of the optimal
potential for the given $q>1$ and positive $\L^q$ bound $A$. Now, this mapping is
differentiable, and $(\frpp{\Psi}{H})(1,H)=\Phi(1)$ where $\Phi$ is the solution of
the following linearized differential equation (note that $\alpha=q/(q-1)>1$):
\[ \Phi''-\alpha\Psi^{\alpha-1}\Phi+2 = 0,\quad \Phi(0)=0,\quad \Phi'(0)=1. \]
The function $\Phi$ is non-negative in the neighbourhood of $t=0+$.
Let us denote $\tau \in
(0,\infty]$ the first possible zero of $\Phi$, and $\tau':=\min\{\tau,1\}$ (remember
that $\Psi>0$ on $(0,1)$). On $(0,\tau')$,
\[ \Phi'' = \alpha\Psi^{\alpha-1}\Phi-2 > -2, \]
so $\Phi > t(1-t)$ on $(0,\tau']$ by integration: necessarily, $\tau>1$. Then
$\Phi(1)>0$, so the mapping $H \mapsto \Psi(1,H)$ is strictly increasing on
$(c(A,q),h(A,q))$ and uniqueness is proved.
Regarding the regularity of the optimal potential, it is clear that
$\Psi$ is smooth on $(0,1)$. Besides, $\dot{\Psi}(0)$ is
positive and it suffices to write, for small enough $t>0$,
\[ \frac{\Psi^{\frac{1}{q-1}}(t)-0}{t-0}
 = t^\frac{2-q}{q-1} \left( \frac{\Psi(t)}{t} \right)^{\frac{1}{q-1}} \]
to evaluate the limit when $t \to 0+$ and obtain the desired conclusion for the
tangencies. (Note the bifurcation at $q=2$.) Same proof when $t \to 1-$.
\end{proof} 

In the particular case $q=2$, one has $\alpha=2$ and $t \mapsto (\psi(t),\psi'(t))$
parameterizes the elliptic curve (compare with~(\ref{eq61}))
\[ y^2 = \frac{2}{3}x^3-4Hx+(H-c(2,A))^2. \]
We know that this elliptic curve is not degenerate for $H$ in
$(c(2,A),h(2,A))$. The value $c(2,A)=9A^2/2$ is explicit (Corollary~\ref{cor13}),
while $h^*(A):=h(2,A)$ is implicitly defined Lemma~\ref{lem12}.
Using the birational change of variables $u=x$, $v=y\sqrt{6}$, the elliptic curve can
be put in Weierstra\ss\ form, $v^2 = 4u^3-g_2 v-g_3$, with
\begin{equation} \label{eq62}
g_2 = 24 H,\quad g_3 = -6(H-9A^2/2)^2.
\end{equation}
For $H$ in $(c(2,A),h(2,A))$, the real curve has two connected components in the plane
and is 
parameterized by $z \mapsto (\wp(z),\wp'(z))$, where $\wp$ is the Weierstra\ss\
elliptic function associated to the invariants (\ref{eq62}). Since $g_2$ and $g_3$ are
real, and since the curve has two components, the lattice $2\omega\Z+2\omega'\Z$
of periods of $\wp$ is rectangular: $\omega$ is real, $\omega'$ is purely imaginary,
and the bounded component of the curve is obtained for $z \in \R+\omega'$. The curve
degenerates for $H=h^*(A)$, so $h^*(A)$ can also be retrieved as the unique root in
$(9A^2/2,\iy)$ of the discriminant
\[ \Delta = g_2^3-27g_3^2 = 3\cdot 6^2(128 H^3-9(H-9A^2/2)^4) \]
of the cubic. We look for a time parameterization $z(t)$ such that
$\wp(z(t))=\psi(t)$ (since $u=x$), and $\wp'(z(t))=\psi'(t)\sqrt{6}$ (since
$v=y\sqrt{6}$).

\begin{lemma} $\displaystyle z(t) = \frac{2t-1}{2\sqrt{6}}+\omega'$
\end{lemma}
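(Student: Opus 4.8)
The plan is to read off $z(t)$ from the two defining relations $\wp(z(t))=\Psi(t)$ and $\wp'(z(t))=\sqrt{6}\,\Psi'(t)$ fixing the time reparameterization. First I would differentiate the first relation, getting $\wp'(z(t))\,z'(t)=\Psi'(t)$, and then substitute the second relation to obtain $\sqrt{6}\,\Psi'(t)\,z'(t)=\Psi'(t)$. Since $\Psi$ is increasing on $[0,1/2]$ and symmetric about $t=1/2$ (Corollary~\ref{cor13}), $\Psi'$ vanishes on $[0,1]$ only at the single point $t=1/2$; hence $z'(t)=1/\sqrt{6}$ for $t\neq 1/2$, and by continuity of $z$ throughout $[0,1]$. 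Therefore $z(t)=t/\sqrt{6}+z_0$ for some constant $z_0\in\CM$, and the whole problem reduces to identifying $z_0$ on the lattice.

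To locate $z_0$ I would use that $(\Psi,\Psi')$ parameterizes the bounded real component of $y^2=f(x)$, which lies on the horizontal line $\R+\omega'$; thus $z_0=\tau+\omega'$ with $\tau\in\R$, and only the real number $\tau$ remains to be computed. For this I would exploit the two endpoints together with the parity of the Weierstrass function. By symmetry one has $\Psi(0)=\Psi(1)=0$ and $\Psi'(1)=-\Psi'(0)$, with $\Psi'(0)=H-c(A,2)>0$ by Proposition~\ref{prop11}. Translated through the two relations, $z(0)$ and $z(1)$ are the two points of the oval at which $\wp$ vanishes, and the sign change $\wp'(z(0))=\sqrt{6}\,\Psi'(0)>0>\sqrt{6}\,\Psi'(1)=\wp'(z(1))$ shows, using that $\wp$ is even and $\wp'$ odd, that $z(1)\equiv -z(0)$ modulo the period lattice; equivalently $z(0)+z(1)\equiv 0 \pmod{2\omega\Z+2\omega'\Z}$.

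Writing $z(0)=a+\omega'$ and $z(1)=b+\omega'$ with $a,b$ real, the relation $z(0)+z(1)\equiv 0$ forces the imaginary part $2\omega'$ to be absorbed by the lattice and leaves $a+b=2m\omega$ for some integer $m$; on the principal branch $m=0$ one gets $a+b=0$. Combined with $b-a=z(1)-z(0)=1/\sqrt{6}$ coming from $z'(t)=1/\sqrt{6}$, this linear system yields $a=-1/(2\sqrt{6})$ and $b=1/(2\sqrt{6})$, that is $\tau=-1/(2\sqrt{6})$ and
\[ z(t)=\frac{2t-1}{2\sqrt{6}}+\omega', \]
which is the asserted formula. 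In particular $z(1/2)=\omega'$, consistent with $\Psi'(1/2)=0$ since $\omega'$ is a half-period and $\wp'(\omega')=0$.

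The step I expect to be delicate is precisely the determination of $z_0$: the differential relation pins $z$ down only up to an additive constant, and on a doubly periodic curve several half-periods and lattice translates are compatible with $\wp'(z(1/2))=0$. Selecting the correct representative requires matching the value $\wp(z(1/2))=\Psi(1/2)$ to the appropriate turning point of the oval, checking that the orientation fixed by the signs of $\Psi'$ at the endpoints is respected, and verifying through $\wp(z(0))=\wp(z(1))=0$ and the rectangular structure of the lattice that the chosen point genuinely lies on $\R+\omega'$. Once this branch selection is settled, all remaining computations reduce to the elementary linear system above and are routine.
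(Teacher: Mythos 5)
Your first two steps are sound. The differentiation argument giving $z'\equiv 1/\sqrt{6}$ is exactly the paper's starting point, and your use of the degree-two, even/odd structure of $(\wp,\wp')$ to get $z(1)\equiv -z(0)$ modulo the lattice is a legitimate variant of the paper's argument (the paper instead invokes the unique zero $\xi_0$ of $\wp$ on the segment $(\omega',\omega+\omega')$ and symmetry). The genuine gap is exactly the step you flagged: ``on the principal branch $m=0$.'' The integer $m$ in $a+b=2m\omega$ is \emph{not} a free normalization. The only freedom in the parameterization is a translation $z\mapsto z+2k\omega$, which changes $m$ into $m+2k$; hence the \emph{parity} of $m$ is an invariant that must be determined from the geometry, and your argument gives no reason to exclude odd $m$. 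In fact odd parity is the correct one. Indeed $z(1/2)=\tfrac{a+b}{2}+\omega'=m\omega+\omega'$, and $\wp(z(1/2))=\Psi(1/2)=\max_{[0,1]}\Psi>0$. Writing $e_1=\wp(\omega)>e_2=\wp(\omega+\omega')>e_3=\wp(\omega')$ for the roots of $4u^3-g_2u-g_3$, one has $e_1e_2e_3=g_3/4=-\tfrac32\bigl(H-9A^2/2\bigr)^2<0$, which together with $e_1+e_2+e_3=0$ forces $e_3<0<e_2<e_1$; moreover $\wp$ restricted to the line $\R+\omega'$ attains its minimum $e_3$ at $\omega'$ and its maximum $e_2$ at $\omega+\omega'$. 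So $\wp(m\omega+\omega')>0$ rules out $m$ even, and the correct conclusion is
\[ z(t)=\frac{2t-1}{2\sqrt{6}}+\omega+\omega'. \]
Your closing ``consistency check'' only verified $\wp'(z(1/2))=0$, which \emph{both} half-periods $\omega'$ and $\omega+\omega'$ on the oval satisfy; it is the value $\wp(z(1/2))$ that resolves the branch, and it rules out your choice.

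In fairness, the lemma as printed, and the paper's own proof, contain the same slip, so your answer agrees with the stated formula. The paper asserts that at the unique zero $\xi_0\in(0,\omega)$ of $s\mapsto\wp(s+\omega')$ one has $\wp'(\xi_0+\omega')<0$; but $\wp(s+\omega')=e_3+(e_1-e_3)(e_2-e_3)/(\wp(s)-e_3)$ increases from $e_3$ to $e_2$ as $s$ runs over $(0,\omega)$, so in fact $\wp'(\xi_0+\omega')>0$. Correcting this swaps the roles of $z(0)$ and $z(1)$, gives $\xi_0=\omega-\tfrac{1}{2\sqrt 6}$, and again yields the formula with $\omega+\omega'$ above (correspondingly, the condition on $H$ in Theorem~\ref{thm3} should read $\wp\bigl(\tfrac{1}{2\sqrt6}+\omega+\omega'\bigr)=0$). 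A quick way to see that the printed constant cannot be right: it would give $V_A(1/2)=\tfrac13\wp(\omega')=\tfrac13 e_3<0$, contradicting the non-negativity of the optimal potential and the fact that it is maximal at $t=1/2$. So the step you could not justify is precisely the point where the argument needs repair, and repairing it changes the answer by the real half-period $\omega$.
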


\begin{proof} One has $\d z/\d t=1/\sqrt{6}$.
Moreover, there exists a unique $\xi_0$ in $(0,\omega)$ such that
$\wp(\xi_0+\omega')=0$ (with $\wp'(\xi_0+\omega')<0$); by symmetry,
$\wp(-\xi_0+\omega')=0$ (with $\wp'(-\xi_0+\omega')>0$), so $\psi(0)=0$ (with
$\psi'(0)>0$) implies $z(0)=-\xi_0+\omega'$, that is $z(t)=t/\sqrt{6}-\xi_0+\omega'$.
As $\psi(1)=0$, necessarily $z(1)=\xi_0+\omega'$, so $\xi_0=1/(2\sqrt{6})$.
\end{proof} 

\noi Recalling Proposition~\ref{prop20}, one eventually gets that the maximal potential
for $q=2$ is
\[ V(t) = \frac{1}{3}\Psi(t) = \frac{1}{3}\wp(\frac{2t-1}{2\sqrt{6}}+\omega') \]
for the unique $H$ in $(9A^2/2,h^*(A))$ such that $\Psi(1)=0$, that is
\[ \wp(\frac{1}{2\sqrt{6}}+\omega') = 0. \]
This proves Theorem~\ref{thm3}.

\end{document}